\theoremstyle{plain}
\newtheorem{thm}{Theorem}[section]
\newtheorem{Prop}[thm]{Proposition}
\newtheorem{Def}[thm]{Definition}
\newtheorem{lma}[thm]{Lemma}
\newtheorem{ex}{Example}
\newtheorem*{example*}{Example}
\newtheorem*{remark*}{Remark}
\newcommand{\Ch}{\mathrm{Ch}}
\newcommand{\Ric}{\mathrm{Ric}}
\newcommand{\Dom}{\mathit{Dom}}
\newcommand{\Lip}{\mathrm{Lip}}
\newcommand{\F}{\mathcal{F}}
\newcommand{\E}{\mathcal{E}}
\newcommand{\lip}{\mathrm{lip}}
\title{Super-Ricci flows and improved gradient and transport estimates}
\author{Eva Kopfer\thanks{Institut f\"ur Angewandte Mathematik, Universit\"at Bonn, Endenicher Allee 60, 53115 Bonn, Germany (\texttt{eva.kopfer@iam.uni-bonn.de})}}
\date{}
\begin{document}

\maketitle

\begin{abstract}
We introduce Brownian motions on time-dependent metric measure spaces, proving their existence and uniqueness. We prove contraction estimates for their trajectories assuming that the time-dependent heat flow satisfies transport estimates with respect to every
$L^p$-Kantorovich distance, $p\in[1,\infty]$.
These transport estimates turn out to characterize super-Ricci flows, introduced by Sturm in \cite{sturm2015}.
\end{abstract}

%\tableofcontents 

\section{Introduction and statement of the main results}
The heat flow on a metric measure space $(X,d,m)$ can be understood either as the gradient flow of the Cheeger energy $\Ch$ on the Hilbert space $L^2(X,m)$ or as the gradient flow of the relative entropy $S$ on the space of probability measures $\mathcal P_2(X)$ endowed with the $L^2$-Kantorovich distance $W_2$. It has been shown in \cite{agscalc} that these two notions coincide under the assumption that $(X,d,m)$ satisfies a lower Ricci curvature bound in the sense of Lott-Sturm-Villani. 

A metric measure space is said to have a lower Ricci curvature bound $K$, in short CD$(K,\infty)$, if the relative entropy is $K$-convex along $L^2$-Kantorovich geodesics. This definition is consistent with the Riemannian case, i.e. a Riemannian manifold has Ricci curvature bounded from below by $K$ if and only if its relative entropy is $K$-convex. But then even more holds true. Each of the following properties characterize a lower curvature bounds of the manifold: $L^p$-transport and gradient estimates of the heat flow for $p\in[1,\infty]$, and pathwise contraction for Brownian trajectories, see e.g. \cite{sturmrenesse}.

On general CD$(K,\infty)$-spaces these properties fail, and the heat flow does not even have to be linear. In order to obtain a more Riemannian-like behavior, Ambrosio, Gigli and Savar\'e introduced in \cite{agsmet} the notion of RCD$(K,\infty)$-spaces, i.e. CD$(K,\infty)$-spaces whose heat flow is linear. This notion is characterized by one single formula, namely the \textit{Evolution variational inequality} of the heat flow with respect to the $L^2$-Kantorovich distance. Moreover one immediately recovers the $L^2$-transport and gradient estimates
\begin{align*}
W_2(P_t\mu,P_t\nu)\leq e^{-Kt}W_2(\mu,\nu),\quad \Gamma(P_tu)\leq e^{-2Kt}P _t\Gamma(u),
\end{align*}
where $\Gamma$ denotes the \emph{Carr\'e du champ} operator. On a Riemannian manifold $(M,g)$, $\Gamma(u)$ corresponds to $|\nabla u|_g^2$.

Savar\'e proved in \cite{savare} that the gradient estimate implies the stronger estimate
\begin{align*}
( \Gamma(P_tu))^\alpha\leq e^{-2\alpha Kt}P _t(\Gamma(u)^\alpha),
\end{align*}
where $\alpha\in[1/2,2]$. This has been first noticed by Bakry in \cite{bakrytrans} in the framework of Dirichlet forms. Crucial for this estimate is the self-improvement of the \emph{Bochner inequality}. By Kuwada's duality \cite{kuwadadual} the stronger gradient estimate further implies stronger $L^p$-transport estimates
\begin{align*}
W_p(P_t\mu,P_t\nu)\leq e^{-Kt}W_p(\mu,\nu) \quad p\in[1,\infty].
\end{align*}

In this paper we aim to show similar estimates for time-dependent metric measures spaces $(X,d_t,m_t)_{t\in I}$, $I=(0,T)$, which evolve under a super-Ricci flow. Moreover we introduce Brownian motions, proving their existence and uniqueness.
We parametrize them backward in time, which seems to be unconventional at first glance, but is the right thing to do when
considering super-Ricci flows, which we parametrize forward in time. We refer to this parametrization of Brownian motion
as \emph{backward Brownian
motion} and prove transport estimates of their trajectories. 

$(X,d_t,m_t)_{t\in I}$ is said to be a super-Ricci flow if
for a.e.\ $t\in I$ and every  $W_t$-geodesic  $(\mu^a)_{a\in[0,1]}$ in $\mathcal P(X)$
\begin{equation*}%\label{est-Ieva}
\partial_a S_t(\mu^{a})\big|_{a=1}-\partial_a S_t(\mu^{a})\big|_{a=0}
\ge- \frac 12\partial_t^-W_{t}^2(\mu^0,\mu^1),
\end{equation*}
where $S_t$ denotes the Boltzmann entropy at time $t$ and $W_t=W_{2,t}$ the $L^2$-Kantorovich distance with respect to $d_t$.
 This notion goes back to Sturm in \cite{sturm2015} and is a generalization to super-Ricci flows for manifolds (``$\Ric_t\geq-\frac12\partial_t g_t$") on one hand and to CD$(K,\infty)$-spaces on the other. 

In \cite{sturm2016} the authors show existence and uniqueness of the heat flow $(P_{t,s})_{t\geq s}$ on $(X,d_t,m_t)_{t\in I}$, i.e. $P_{t,s}u$ solves
\begin{align*}
\partial_tu_t=\Delta_tu_t,\text{ for }t>s \text{ and } u_s=u,
\end{align*}
 and moreover, that super-Ricci flows are characterized by the time-dependent gradient estimate
\begin{align*}
 \Gamma_t(P_{t,s}u)\leq P_{t,s}(\Gamma_s(u)),
\end{align*}
or equivalently, by the $L^2$-Kantorovich transport estimate
\begin{align}\label{transporting}
W_{2,s}(\hat P_{t,s}\mu,\hat P_{t,s}\nu)\leq W_{2,t}(\mu,\nu),                       
\end{align}
where $\hat P_{t,s}$ denotes the dual heat flow on probability measures. See also Definition \ref{supereva} below. The spaces $(X,d_t,m_t)_{t\in I}$ are assumed to satisfy RCD$(K,N)$ for some $K,N\in\mathbb N$, i.e. they satisfy RCD$(K,\infty)$ with dimensions bounded from above by $N$. For more details we refer to Section \ref{sec:proofofmain}.

In the same setting as in \cite{sturm2016} we will introduce backward Brownian motions (see Definition \ref{defofbm}) and prove their existence and uniqueness (Proposition \ref{prop:exbm}). Moreover we construct couplings of backward Brownian motions satisfying a pathwise contraction estimate assuming that stronger $L^p$-transport estimates hold
 (Theorem \ref{brownian}).
\begin{thm}\label{thmb}
Let $(X,d_t,m_t)_{t\in I}$ be a family of RCD$(K,N)$-spaces.

Then there exists a unique backward Brownian motion $(B_s)_{s\leq t}$ with terminal distribution $\mu$, i.e. a sample-continuous Markov process
with transition probabilities $\hat P_{s',s}(\delta_x)$.

Moreover, assuming that
\begin{align}\label{eq:strongtransport}
W_{p,s}(\hat P_{t,s}\mu,\hat P_{t,s}\nu)\leq W_{p,t}(\mu,\nu),
\end{align}
holds for all $s\leq t$, $p\in[0,\infty]$ and $\mu,\nu\in\mathcal P(X)$, there exists a coupling $(B_s^1,B_s^2)_{s\leq t}$ of backward Brownian motions satisfying 
\begin{align}\label{eq:strongtransport2}
d_s(B_s^1,B_s^2)\leq d_t(B_t^1,B_t^2)\text{ almost surely}.
\end{align}
\end{thm}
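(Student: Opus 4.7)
The plan is to establish the two assertions in sequence: existence and uniqueness of a backward Brownian motion by a Kolmogorov-type construction driven by the dual heat semigroup, and then construction of the pathwise coupling by propagating a coupling of the terminal distributions backward in time using the $W_\infty$-contractive kernels furnished by \eqref{eq:strongtransport} at $p=\infty$.

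For existence, the family $\{\hat P_{s',s}\}_{s\le s'\le t}$ is Markovian: the Chapman--Kolmogorov relation $\hat P_{s'',s}=\hat P_{s',s}\,\hat P_{s'',s'}$ is exactly the semigroup property of the dual heat flow established in \cite{sturm2016}. The Kolmogorov extension theorem, applied to the finite-dimensional distributions obtained by composing $\mu$ with $\hat P_{t,s_n},\hat P_{s_n,s_{n-1}},\ldots$, then delivers a Markov process with the prescribed terminal law $\mu$ and the right transitions; uniqueness in law is immediate. For a sample-continuous modification I would invoke Kolmogorov's continuity criterion together with the Gaussian-type moment estimates on the heat kernel, which are available uniformly on the family of RCD$(K,N)$-spaces under the standing assumptions taken from \cite{sturm2016}.

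The coupling construction relies on the $p=\infty$ case of \eqref{eq:strongtransport},
\[
W_{\infty,s}\bigl(\hat P_{t,s}\delta_x,\hat P_{t,s}\delta_y\bigr)\le d_t(x,y),
\]
which for every $s\le t$ and $(x,y)\in X\times X$ yields a coupling $q^{t,s}_{x,y}$ of $\hat P_{t,s}\delta_x$ and $\hat P_{t,s}\delta_y$ concentrated on $\{(u,v):d_s(u,v)\le d_t(x,y)\}$. A Kuratowski--Ryll-Nardzewski type measurable selection allows one to take $(x,y)\mapsto q^{t,s}_{x,y}$ jointly measurable. Along a partition $s=r_0<r_1<\cdots<r_n=t$ I would start from any coupling $\pi$ of the terminal laws (for instance a $W_{\infty,t}$-optimal one) and compose the kernels $q^{r_{k+1},r_k}$ backward to obtain a joint finite-dimensional law on $(X\times X)^{n+1}$. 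These laws are consistent across refining partitions and, by the Kolmogorov extension theorem, extend to a joint process $(B^1,B^2)$ on $[s,t]$ whose marginals are backward Brownian motions on the partition times and along which $r\mapsto d_r(B_r^1,B_r^2)$ is pathwise nonincreasing.

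The hard part will be upgrading the marginal sample continuity from Part~1 to joint continuity of $(B^1,B^2)$, since the Kolmogorov construction is indifferent to the topology of the product. To handle this I would first realise $B^1$ as the continuous modification of the BBM from Part~1, and then construct $B^2$ conditionally on $B^1$ via the kernels $q$. The almost-sure bound $d_r(B_r^1,B_r^2)\le d_t(B_t^1,B_t^2)$ slaves the oscillations of $B^2$ to those of $B^1$, so that continuity of $B^1$ transfers to $B^2$ in the limit along refining partitions. Tightness in this limit follows from sample continuity of the marginals together with the uniform distance bound, and the limiting process provides the desired coupling satisfying \eqref{eq:strongtransport2}.
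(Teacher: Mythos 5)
Your first part (existence and uniqueness) matches the paper: finite-dimensional laws built from the heat kernels, Kolmogorov extension, and a continuous modification via the Kolmogorov continuity criterion with the Gaussian heat-kernel moment bound (Lemma \ref{continuitylp}, Proposition \ref{prop:exbm}). The gap is in the coupling part. You assert that the joint finite-dimensional laws obtained by composing the selected coupling kernels $q^{r_{k+1},r_k}$ ``are consistent across refining partitions'' and can therefore be fed directly into the Kolmogorov extension theorem. This is not true in general: the one-step kernels produced by measurable selection couple $p_{r_{k+1},r_k}(x,\cdot)$ and $p_{r_{k+1},r_k}(y,\cdot)$, but the two-parameter family $(x,y)\mapsto q^{r',r}_{x,y}$ has no reason to satisfy a Chapman--Kolmogorov relation on $X^2$; inserting an intermediate time changes the joint law, even though both $X$-marginals remain consistent. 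This is exactly the obstruction the paper (following Sturm's coupling construction) works around: it composes the elementary kernels only within a fixed dyadic grid $\mathrm D_n$, where consistency holds by construction (Lemma \ref{cklemma} and \eqref{ck}), then proves tightness of the resulting laws (Lemma \ref{tightness}), extracts by a diagonal argument a weak subsequential limit which \emph{is} a consistent projective family over all dyadic times (Proposition \ref{weak}), and only then applies Kolmogorov extension. The pathwise bound is then recovered at dyadic times from weak convergence via truncated $L^p$-moments (letting $R\to\infty$, $p\to\infty$) and extended to all times by continuity. Without the tightness/weak-limit step your construction does not produce a single process on $[0,t]$.

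A second, smaller problem is your plan for joint sample continuity: the bound $d_r(B^1_r,B^2_r)\le d_t(B^1_t,B^2_t)$ does not ``slave the oscillations'' of $B^2$ to those of $B^1$ --- it controls the distance between the two processes at each fixed time, not the modulus of continuity of $B^2$ (the triangle inequality only gives $d(B^2_r,B^2_{r'})\le d(B^1_r,B^1_{r'})$ plus a term of order $d_t(B^1_t,B^2_t)$, which does not vanish as $r'\to r$). The correct and simpler route, used in the proof of Theorem \ref{brownian}, is to note that under the coupled law each coordinate process separately has the law of a backward Brownian motion and hence satisfies the moment estimate \eqref{eq:continuitylp}; consequently the pair satisfies the Kolmogorov continuity criterion with respect to the product metric, which yields a jointly continuous modification directly, and the almost sure inequality \eqref{eq:strongtransport2} at all times then follows from its validity on the countable dense set of dyadic times together with path continuity.
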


The question is under what circumstances the estimate in \eqref{eq:strongtransport} is satisfied in the setting of 
time-dependent metric measure spaces. The answer is it holds if and only if $(X,d_t,m_t)$ is a super-Ricci flow.
Moreover the estimate in \eqref{eq:strongtransport2} characterize super-Ricci flows as well.
This is summarized in the following theorem. For the precise statement see Theorem \ref{thm3}.

 \begin{thm}\label{thm3intro}
Let $(X,d_t,m_t)_{t\in I}$ be a family of RCD$(K,N)$-spaces such that $t\mapsto\Gamma_t(u)$ is a $\mathcal C^1$-function for all $u\in\Lip(X)$. Then, $(X,d_t,m_t)_{t\in I}$ is a super-Ricci flow if and only if one of the following equivalent properties holds
\begin{enumerate}
\item[i)] for all $s\leq t$, $\alpha\in[1/2,1]$ and $u\in \Lip(X)$
 \begin{align*}
 (\Gamma_t(P_{t,s}u))^\alpha\leq P_{t,s}(\Gamma_s(u)^\alpha)\quad m\text{-a.e.},
     \end{align*}  
\item[ii)] for all $s\leq t$, $p\in[1,\infty]$ and $\mu,\nu\in\mathcal P(X)$
\begin{align*}
W_{p,s}(\hat P_{t,s}\mu,\hat P_{t,s}\nu)\leq W_{p,t}(\mu,\nu),
\end{align*}
\item[iii)] there exists a coupling of backward Brownian motions
$(B_s^1,B_s^2)_{s\leq t}$ such that for all $s\leq t$
\begin{align*}
 d_{s}(B_{s}^1,B_{s}^2)\leq d_t(B_t^1,B_t^2) \text{ almost surely}.
\end{align*}
\end{enumerate}
\end{thm}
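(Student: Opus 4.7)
The plan is to close the cycle
\[
\text{super-Ricci flow}\ \Longrightarrow\ \text{(i)}\ \Longrightarrow\ \text{(iii)}\ \Longrightarrow\ \text{(ii)}\ \Longrightarrow\ \text{super-Ricci flow},
\]
which together with the implication (ii)$\Rightarrow$(iii) furnished by Theorem \ref{thmb} gives the full equivalence. The closing implication (ii)$\Rightarrow$super-Ricci flow is immediate: specialise (ii) to $p=2$ and apply the $W_{2}$-characterization (\ref{transporting}) from \cite{sturm2016}.

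For super-Ricci flow $\Rightarrow$ (i), I would adapt Savar\'e's self-improvement of the Bochner inequality \cite{savare} to the time-dependent setting. The starting point is the $L^{2}$-gradient estimate $\Gamma_{t}(P_{t,s}u)\le P_{t,s}(\Gamma_{s}u)$ that characterises super-Ricci flows; differentiating it at $s=t$ and using the $\mathcal C^{1}$-regularity of $t\mapsto\Gamma_{t}(u)$ produces a time-dependent Bochner inequality in which the usual curvature term is replaced by $\tfrac12\partial_{t}\Gamma_{t}(u)$. Fixing $u$ and $s\le t$, I would then study the auxiliary function
\[
\phi(r):=P_{r,s}\bigl(\Gamma_{r}(P_{t,r}u)^{\alpha}\bigr),\qquad r\in[s,t],
\]
and show that $\phi'(r)\ge 0$. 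Expanding the derivative via the forward heat equation for $P_{t,r}u$, the dual backward equation for $P_{r,s}$, the chain rule, and the $\mathcal C^{1}$-assumption, monotonicity reduces to the time-dependent Bochner inequality combined with the elementary algebraic inequality at the heart of the standard self-improvement argument, which holds precisely for $\alpha\in[1/2,1]$ in the dimension-free regime. This yields (i).

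For (i)$\Rightarrow$(iii), taking $\alpha=1/2$ produces the pointwise gradient bound $\sqrt{\Gamma_{t}(P_{t,s}u)}\le P_{t,s}(\sqrt{\Gamma_{s}u})$, whose time-dependent Kuwada dual (adapting \cite{kuwadadual}) is the $W_{\infty}$-contraction $W_{\infty,s}(\hat P_{t,s}\delta_{x},\hat P_{t,s}\delta_{y})\le d_{t}(x,y)$ for all $x,y\in X$. Selecting, measurably in $(x,y)$, an optimal $W_{\infty,s}$-coupling of the transition kernels and threading it through the Markov construction underlying Proposition \ref{prop:exbm} produces a coupling of backward Brownian motions with $d_{s}(B_{s}^{1},B_{s}^{2})\le d_{t}(B_{t}^{1},B_{t}^{2})$ almost surely, i.e.\ (iii). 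For (iii)$\Rightarrow$(ii), given $\mu,\nu\in\mathcal P(X)$ and $p\in[1,\infty)$, couple $(B_{t}^{1},B_{t}^{2})$ by an optimal $W_{p,t}$-plan between $\mu$ and $\nu$ and run the coupled backward Brownian motion; by the Markov property the marginals of $B_{s}^{1},B_{s}^{2}$ are $\hat P_{t,s}\mu$ and $\hat P_{t,s}\nu$, and
\[
W_{p,s}(\hat P_{t,s}\mu,\hat P_{t,s}\nu)^{p}\le \mathbb E\bigl[d_{s}(B_{s}^{1},B_{s}^{2})^{p}\bigr]\le \mathbb E\bigl[d_{t}(B_{t}^{1},B_{t}^{2})^{p}\bigr]=W_{p,t}(\mu,\nu)^{p},
\]
with the essential-supremum variant for $p=\infty$.

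The main obstacle is the self-improvement step. Managing simultaneously the variation of $\Delta_{t}$, the time derivative $\partial_{t}\Gamma_{t}$ (which plays the role of curvature in the Bochner inequality), the chain rule along both the forward flow $P_{t,r}$ and the dual backward flow $P_{r,s}$, and justifying the ensuing integration by parts in the absence of a smooth structure is what makes the argument delicate; the $\mathcal C^{1}$-hypothesis on $t\mapsto\Gamma_{t}(u)$ is precisely what renders the formal computation rigorous.
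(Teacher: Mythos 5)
Your plan is essentially the paper's own route: super-Ricci flow is first converted into a pointwise dynamic Bochner inequality with curvature term $\tfrac12\partial_t\Gamma_t(u)$ (Theorem \ref{thm2}), this is self-improved \`a la Savar\'e to the gradient estimates in (i) (Theorem \ref{thm1}), Kuwada duality turns these into $W_p$-contractions, the coupling of backward Brownian motions is built from measurable selections of contracting couplings of the transition kernels, and the loop is closed exactly as you propose, by (iii)$\Rightarrow$(ii) via an optimal terminal plan and (ii) with $p=2$ $\Rightarrow$ super-Ricci flow via Definition \ref{supereva}. Your reordering (i)$\Rightarrow$(iii)$\Rightarrow$(ii), using only the $W_\infty$/Dirac contraction to build the coupling, is immaterial: the paper's Theorem \ref{brownian} likewise only exploits the $\|d_s\|_{L^\infty(\gamma)}\le d_t(x,y)$ bound for Dirac terminal data, obtained there by letting $p\to\infty$.

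Two points in your sketch need repair or carry more weight than you assign them. First, the interpolation functional is miswritten: $\phi(r)=P_{r,s}\bigl(\Gamma_r(P_{t,r}u)^\alpha\bigr)$ has endpoints $\Gamma_s(P_{t,s}u)^\alpha$ at $r=s$ and $P_{t,s}(\Gamma_t(u)^\alpha)$ at $r=t$, which is not statement (i); the correct object is $r\mapsto P_{t,r}\bigl(\Gamma_r(P_{r,s}u)^\alpha\bigr)$, to be shown non-increasing, and the paper in fact works with its weak, regularized form $h_r^\varepsilon=\int P^*_{t,r}g\,\omega_\varepsilon(\Gamma_r(P_{r,s}u))\,dm_r$, $\omega_\varepsilon(x)=(\varepsilon+x)^\alpha-\varepsilon^\alpha$. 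Second, the monotonicity does not follow from the dynamic Bochner inequality plus an elementary algebraic inequality alone: after the chain rule a term $\Gamma_r(\Gamma_r(u_r))\,\omega_\varepsilon''(\Gamma_r(u_r))$ appears, and one needs the self-improved bound $\Gamma_t(\Gamma_t(u))\le 4\bigl(\gamma_{2,t}(u)-\tfrac12\partial_t\Gamma_t(u)\bigr)\Gamma_t(u)$ of Proposition \ref{gammaestimate}, which in turn requires showing that $\Gamma_{2,t}(u)$ is measure-valued with absolutely continuous part dominating $\tfrac12\partial_t\Gamma_t(u)$ (Proposition \ref{mystuff}); this is the real content of ``adapting Savar\'e''. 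Similarly, ``threading the selected couplings through Proposition \ref{prop:exbm}'' hides that the selected kernels need not be mutually consistent: one composes them along dyadic partitions, proves tightness, passes to a projective limit and then to a continuous modification, as in Theorem \ref{brownian}; and since Theorem \ref{thm1} yields the gradient estimate only for a.e.\ pair of times, an upgrade to all times via continuity of $(s,t)\mapsto W_{p,s}(\hat P_{t,s}\mu,\hat P_{t,s}\nu)$ is needed (Lemma \ref{cor:kuwa}). None of this obstructs your plan; it is where the substance of the proof lies.
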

The proof of this is subdivided into developing two results, Theorem \ref{thm2intro} and Theorem \ref{thm1intro}, which we will
state separately, since they are interesting in itself. In Theorem \ref{thm2intro} we derive a pointwise dynamic version of Bochner's inequality,
which is crucial to deduce the improved gradient estimate in Theorem \ref{thm1intro}. Having established the improved gradient
estimate, Kuwada's duality \cite[Theorem 2.2]{kuwadadual} and Theorem \ref{thmb} immediately imply Theorem \ref{thm3intro}. 
\begin{thm}\label{thm2intro}
Let $(X,d_t,m_t)_{t\in I}$ be a family of RCD$(K,N)$-spaces such that $t\mapsto\Gamma_t(u)$ is a $\mathcal C^1$-function 
for all $u\in\Lip(X)$. Then, if $(X,d_t,m_t)_{t\in I}$ is a super-Ricci flow the pointwise dynamic Bochner inequality
\begin{align*}
 \frac12\Delta_t(\Gamma_t(u))-\Gamma_t(u,\Delta_tu)\geq \frac12(\partial_t\Gamma_t)(u).
\end{align*}
 holds for all $t\in I$.
\end{thm}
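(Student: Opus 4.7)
The plan is to derive the pointwise dynamic Bochner inequality from the forward gradient estimate
\[ \Gamma_t(P_{t,s}u) \leq P_{t,s}\bigl(\Gamma_s(u)\bigr) \quad m\text{-a.e.,} \]
which by \cite{sturm2016} characterizes super-Ricci flows and is therefore available under our hypotheses. This is the time-dependent analogue of the classical semigroup-interpolation trick that converts a gradient estimate into a pointwise $\Gamma_2$-type inequality.

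Fix $t\in I$ and a sufficiently regular test function $u$, say $u\in\Dom(\Delta_t)$ with $\Delta_t u\in L^\infty(m_t)$ and $\Gamma_t(u)\in\Dom(\Delta_t)\cap L^\infty(m_t)$, a class one reaches by acting with $P_{t,s}$ on Lipschitz data and to which the general statement is reduced by a density argument letting $s\nearrow t$. For small $h\ge 0$ set
\[ G_h := P_{t+h,t}\bigl(\Gamma_t(u)\bigr) - \Gamma_{t+h}\bigl(P_{t+h,t}u\bigr). \]
Applying the gradient estimate with parameters $(s,r)=(t,t+h)$ gives $G_h\ge 0$, while $G_0\equiv 0$; hence $\partial_h^+ G_h\big|_{h=0}\ge 0$ wherever this one-sided derivative exists. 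The two summands are now differentiated at $h=0^+$ termwise. The forward heat equation $\partial_h P_{t+h,t}\phi\big|_{h=0^+}=\Delta_t\phi$ contributes $\Delta_t(\Gamma_t u)$ from the first term. For the second, set $v_h:=P_{t+h,t}u$, so $v_0=u$ and $\partial_h^+ v_h\big|_{h=0}=\Delta_t u$; the chain rule together with the $\mathcal C^1$-assumption on $t\mapsto\Gamma_t(u)$ and the bilinearity of $\Gamma_{t+h}$ yields
\[ \partial_h^+\Gamma_{t+h}(v_h)\big|_{h=0} = (\partial_t\Gamma_t)(u) + 2\Gamma_t(u,\Delta_t u). \]
Subtracting and dividing by $2$ produces exactly the asserted inequality.

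The principal obstacle is to justify the above differentiation \emph{pointwise} $m$-a.e.\ at $h=0^+$, because $P_{t+h,t}$ is a two-parameter propagator on a family $\{m_t\}_{t\in I}$ of possibly mutually singular reference measures. What the differential calculus from \cite{sturm2016} provides naturally is $L^2$-differentiability; upgrading to an $m$-a.e.\ statement requires uniform Bakry--\'Emery-type bounds on $\|\Gamma_{t+h}(P_{t+h,t}u)\|_\infty$ and $\|\Delta_{t+h}P_{t+h,t}u\|_\infty$ on a small interval $h\in[0,h_0]$, which are supplied by the RCD$(K,N)$-hypothesis. The $\mathcal C^1$-hypothesis on $t\mapsto\Gamma_t(u)$ is then exactly what is needed to differentiate the time-dependent Carr\'e du champ pointwise in $t$ and thereby close the argument.
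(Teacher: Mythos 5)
Your strategy — differentiate the Bakry--\'Emery interpolation $G_h = P_{t+h,t}(\Gamma_t(u)) - \Gamma_{t+h}(P_{t+h,t}u)$ at $h=0^+$ — is exactly the classical smooth-manifold argument, and the formal computation recovers the claimed inequality. However, it has two gaps in the nonsmooth setting that the paper's proof (Theorem \ref{thm2}) is specifically designed to avoid.

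First, the regularity reduction you propose does not work. You place $u$ in the class with $\Gamma_t(u)\in\Dom(\Delta_t)\cap L^\infty$, ``a class one reaches by acting with $P_{t,s}$ on Lipschitz data and to which the general statement is reduced by a density argument.'' But on RCD spaces $\Gamma_t(P_{t,s}w)$ is generically \emph{not} in $\Dom(\Delta_t)$, even for Lipschitz $w$; the best one has (via Savar\'e's self-improvement) is $\Gamma_t(u)\in\F$, with $\Delta_t^*\Gamma_t(u)$ a (signed) measure. So your assumption cannot be reached by mollification, and the density argument has nothing to fall back on. The paper acknowledges this explicitly (``Under sufficient regularity ... but for the purpose of Section \ref{sec:bochner}, we deal with less regularity'') and for precisely this reason formulates the precise statement \eqref{Bochnereva} weakly, with $\Delta_t$ acting on the test function $g$ rather than on $\Gamma_t(u)$.

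Second, your passage from the $L^2$-inequality $G_h\ge 0$ to the pointwise derivative $\partial_h^+G_h\big|_{h=0}\ge 0$ $m$-a.e.\ is not justified and does not obviously hold. You write that $L^\infty$-bounds from RCD$(K,N)$ ``upgrade'' $L^2$-differentiability to $m$-a.e.\ differentiability; that is not enough --- $L^2$-convergence of difference quotients yields only subsequential a.e.\ convergence, along an $h$-dependent null set. Moreover, differentiating $h\mapsto\Gamma_{t+h}(P_{t+h,t}u)$ directly is the hard part: it is quadratic in $P_{t+h,t}u$ and couples the time-dependence of $\Gamma$ with the time-dependence of the propagator. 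The paper circumvents exactly this difficulty by starting not from the gradient estimate \eqref{est-IIIeva} but from the \emph{transport} estimate \eqref{transporting}, from which Lemmas 5.10--5.11 in \cite{sturm2016} produce the refined ``action'' inequality
\begin{align*}
-\tfrac12\int P_{t,s}(\Gamma_s(u))\,g\,dm_t+\int\Gamma_t(P_{t,s}u,u)\,g\,dm_t\le\tfrac12\int\Gamma_t(u)\,g\,dm_t,
\end{align*}
which is \emph{linear} in $P_{t,s}u$. Testing against $g\ge0$, differentiating from below as $s\nearrow t$, and invoking Lemma \ref{P*Eva} for the adjoint propagator (together with Proposition \ref{propdiffbar} for the $\partial_t\Gamma_t$-term and a semigroup mollification $u=h_\varepsilon^t u_0$ that only needs to produce $\Dom(\Delta_t)$-regularity) then yields \eqref{Bochnereva} for all $t$. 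Note also that the paper's differentiation is in the lower time $s\nearrow t$, matching the hypotheses of Lemma \ref{P*Eva}; your forward differentiation in the upper time $t+h$ would need an analogous (unproved) lemma for $P^*_{t+h,t}g$, and in that direction the changing reference measure $m_{t+h}$ contributes an extra $\partial_t f_t$-term that your proposal does not account for.

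In short: the idea is the natural one and would work for smooth data, but as written the proposal presupposes regularity ($\Gamma_t(u)\in\Dom(\Delta_t)$, pointwise differentiability of the propagator in time) that is unavailable in the RCD setting, while the paper's weak formulation plus the linearized action inequality from \cite{sturm2016} is precisely the device that makes the argument go through without it.
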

%\begin{thm}\label{thm2}
%Let $(X,d_t,m_t)_{t\in I}$ be a one-parameter family of geodesic Polish metric measure spaces satisfying \eqref{assumption1}, \eqref{assumption2}, \eqref{assumption3}
%and \eqref{assumption4} such that each $(X,d_t,m_t)$ is a RCD$(K,N)$ space.
%If the transport estimate \eqref{transporting} holds, then the dynamic Bochner inequality \eqref{Bochnereva} holds at all $t\in I$.
%\end{thm}

For the precise statement see Theorem \ref{thm2}.
Note that the authors in \cite{sturm2016} derive a dynamic version of Bochner's inequality, where 
the test functions $u$ appear as heat flows $P_{t,s}u_s$ and the estimate holds in an almost everywhere sense.
The pointwise dynamic Bochner inequality is an essential modification, since from this we obtain the improved gradient 
estimates $i)$ in Theorem \ref{thm3intro}. We summarize this in the following theorem, see also Theorem \ref{thm1}.
\begin{thm}\label{thm1intro}
Let $(X,d_t,m_t)_{t\in I}$ be a family of RCD$(K,N)$-spaces such that $t\mapsto\Gamma_t(u)$ is a $\mathcal C^1$-function for all $u\in\Lip(X)$. Then, if $(X,d_t,m_t)_{t\in I}$ is a super-Ricci flow 
for every $\alpha\in[1/2,1]$ we have for $s\leq t$ and every $u\in\Lip(X)$
 \begin{align*}
 (\Gamma_t(P_{t,s}u))^\alpha\leq P_{t,s}(\Gamma_s(u)^\alpha) \quad m\text{-a.e..}
     \end{align*} 
\end{thm}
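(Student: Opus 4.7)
The plan is to adapt the Bakry--Ledoux/Savar\'e self-improvement scheme to the time-inhomogeneous setting, taking the pointwise dynamic Bochner inequality of Theorem~\ref{thm2intro} as the essential input. Fix $s<t$ and $u\in\Lip(X)$, put $v_r:=P_{r,s}u$ for $r\in[s,t]$ so that $\partial_r v_r=\Delta_r v_r$, and for a regularizing parameter $\epsilon>0$ define $g_r:=\Gamma_r(v_r)+\epsilon$ together with the interpolation
\begin{equation*}
\phi(r):=P_{t,r}\bigl(g_r^\alpha\bigr),\qquad r\in[s,t].
\end{equation*}
Since $\phi(s)=P_{t,s}\bigl((\Gamma_s(u)+\epsilon)^\alpha\bigr)$ and $\phi(t)=\bigl(\Gamma_t(P_{t,s}u)+\epsilon\bigr)^\alpha$, showing $\phi$ non-increasing on $[s,t]$ and then letting $\epsilon\downarrow 0$ yields the claim.

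Differentiating, using the backward derivative $\partial_r P_{t,r}f=-P_{t,r}\Delta_r f$ (from the two-parameter semigroup identity $P_{t,r}=P_{t,r+h}P_{r+h,r}$), the chain rule $\Delta_r(f^\alpha)=\alpha f^{\alpha-1}\Delta_r f+\alpha(\alpha-1)f^{\alpha-2}\Gamma_r(f)$, and $\partial_r\Gamma_r(v_r)=(\partial_r\Gamma_r)(v_r)+2\Gamma_r(v_r,\Delta_r v_r)$, I expect to arrive at
\begin{equation*}
\phi'(r)=-2\alpha\,P_{t,r}\left\{g_r^{\alpha-1}\left[\tfrac12\Delta_r g_r-\Gamma_r(v_r,\Delta_r v_r)-\tfrac12(\partial_r\Gamma_r)(v_r)\right]+\tfrac{\alpha-1}{2}\,g_r^{\alpha-2}\,\Gamma_r(g_r)\right\}.
\end{equation*}
The first square bracket is nonnegative by Theorem~\ref{thm2intro} applied pointwise to $v_r$, so the case $\alpha=1$ follows immediately.

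For $\alpha\in[1/2,1)$ the second summand is nonpositive and a self-improvement of Theorem~\ref{thm2intro} is required, namely
\begin{equation*}
\tfrac12\Delta_t\Gamma_t(u)-\Gamma_t(u,\Delta_tu)-\tfrac12(\partial_t\Gamma_t)(u)\;\geq\;\frac{\Gamma_t\bigl(\Gamma_t(u)\bigr)}{4\,\Gamma_t(u)}.
\end{equation*}
This I would prove via the standard Bakry trick: apply the pointwise dynamic Bochner inequality to a test function of the form $u+\delta\,\eta(\Gamma_t(u))$ for a smooth $\eta$, expand to second order in $\delta$, and optimize over $\eta$ so as to extract the $\Gamma(\Gamma(u))/\Gamma(u)$ correction. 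Plugging the self-improved bound into the curly bracket above, the two terms combine to $\tfrac{2\alpha-1}{4}\,g_r^{\alpha-2}\Gamma_r(g_r)$, which is nonnegative precisely when $\alpha\in[1/2,1]$, giving $\phi'\leq 0$ throughout.

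The main difficulty I anticipate is the self-improvement step: in contrast to the static case, the time-derivative term $(\partial_t\Gamma_t)(u)$ and its variations under the perturbation $u\mapsto u+\delta\eta(\Gamma_t(u))$ must be tracked carefully, which is where the $\mathcal C^1$-assumption on $t\mapsto\Gamma_t(u)$ is used. Secondary technical points include justifying the pointwise identity $\partial_r P_{t,r}f=-P_{t,r}\Delta_r f$ and the chain rules for $\Delta_r$ acting on powers within the RCD$(K,N)$ calculus---a small heat-kernel smoothing of $u$ combined with Lipschitz approximation should suffice---and the final limit $\epsilon\downarrow0$, which requires uniform estimates compatible with possible vanishing of $\Gamma_r(v_r)$.
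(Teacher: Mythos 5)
Your interpolation is the right one in spirit, and the inequality you isolate as the needed self-improvement is exactly the paper's Proposition \ref{gammaestimate} (in density form, $\Gamma_t(\Gamma_t(u))\le 4\big(\gamma_{2,t}(u)-\tfrac12\partial_t\Gamma_t(u)\big)\Gamma_t(u)$), and your algebra reducing the bracket to $\tfrac{2\alpha-1}{4}g_r^{\alpha-2}\Gamma_r(g_r)$ is the same mechanism as the paper's condition $2\omega_\varepsilon'(r)+4r\omega_\varepsilon''(r)\ge0$ for $\omega_\varepsilon(r)=(\varepsilon+r)^\alpha-\varepsilon^\alpha$. The genuine gap is that both of your central steps treat $\Gamma_r(v_r)$ as if it were in the domain of the Laplacian. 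Your formula for $\phi'(r)$ requires $g_r^\alpha=(\Gamma_r(v_r)+\epsilon)^\alpha\in\Dom(\Delta_r)$ so that $\Delta_r(g_r^\alpha)$ exists as a function and $\partial_rP_{t,r}$ can be applied to it; on a nonsmooth RCD space this fails even for very regular $u$: under the dynamic Bochner inequality one only obtains $\Gamma_t(u)\in\F$ with a \emph{measure-valued} Laplacian whose positive part may be singular (this is precisely Proposition \ref{mystuff}, $2\Gamma_{2,t}(u)=(\partial_t\Gamma_t)(u)\,m+\mu_+$). Your proposed remedy, heat-kernel smoothing of $u$, does not address this, since mollifying $u$ does not regularize $\Gamma(u)$ on a nonsmooth space; moreover $\partial_rP_{t,r}f=-P_{t,r}\Delta_rf$ is itself delicate here because $P_{t,s}$ and $\Delta_t$ do not commute and $\Dom(\Delta_t)$ varies in $t$. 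The paper avoids ever differentiating $P_{t,r}(g_r^\alpha)$ pointwise: it works with the integrated dual quantity $h_r^\varepsilon=\int P^*_{t,r}g\,\omega_\varepsilon(\Gamma_r(u_r))\,dm_r$, one-sided difference quotients at Lebesgue points, and the decomposition of $\Gamma_{2,r}$ into $\gamma_{2,r}m_r$ plus a nonnegative singular part, which is exactly where the sign information and the concavity of $\omega_\varepsilon$ are used (proof of Theorem \ref{thm1}).

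The same issue breaks your route to the self-improvement: $u+\delta\,\eta(\Gamma_t(u))$ is not an admissible test function in the pointwise dynamic Bochner inequality, because $\eta(\Gamma_t(u))$ is in general not in $\Dom(\Delta_t)$ (nor does it have bounded carr\'e du champ a priori). The paper's argument (Lemma \ref{fundamental} and Proposition \ref{gammaestimate}) instead applies the inequality to the polynomial $\Psi(\bar u)=\lambda u_1+(u_2-a)(u_3-b)-ab$ of \emph{independent} functions $u_1,u_2,u_3\in\Dom(\Delta_t)\cap L^\infty$ with bounded $\Gamma_t$, optimizes in $\lambda,a,b$ to get the Hessian bound $(H_t[u_1](u_2,u_3))^2\le\big(\gamma_{2,t}(u_1)-\tfrac12(\partial_t\Gamma_t)(u_1)\big)\Gamma_t(u_2)\Gamma_t(u_3)$, and only afterwards substitutes $u_3=\Gamma_t(u_1,u_2)$ by approximation in the form domain --- legitimate because at that stage $u_3$ enters only through $\Gamma_t$, never through $\Delta_tu_3$. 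So your outline correctly identifies the key inequality and the interpolation scheme, but as written the two decisive steps would fail without the measure-valued $\Gamma_2$ machinery, the multi-function substitution argument, and the integrated (rather than pointwise) interpolation that the paper builds for exactly this purpose.
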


Let us remark that several difficulties arise due to the time-dependence, especially regarding the regularity of the heat flow, e.g. the Laplacian and the semigroup do not commute and the domain of the Laplacian is time-dependent.

% \begin{thm}\label{thm1}
%Let $(X,d_t,m_t)_{t\in I}$ be as in Theorem \ref{thm2}. Then, if the dynamic Bochner inequality \eqref{Bochnereva} and the regularity assumption \eqref{reg-bocheva} is satisfied,
%for every $\alpha\in[1/2,1]$ we have for a.e.\ $\tau\leq t$ and $\sigma\geq s$ and every $u\in\Dom(\Ch)$ and 
 %\begin{align*}
 %\Gamma_\tau(P_{\tau,\sigma}u)^\alpha\leq P_{\tau,\sigma}(\Gamma_\sigma(u)^\alpha) \quad m\text{-a.e..}
    % \end{align*} 
%\end{thm}

In the following we visualize super-Ricci flows satisfying the properties in Theorem \ref{thm3intro} by giving a 
few examples:
\begin{ex}
Static spaces: Let $(X,d,m)$ be an RCD$(K,N)$-space. Then the constant space $(X,d_t,m_t)_{t\in I}$ is a super-Ricci flow 
if and only if $K=0$.
\end{ex}
\begin{ex}
Smooth spaces: Let $(M,g_t,\textrm{vol}_t)_{t\in I}$ be a family of smooth manifolds, where $(g_t)_{t\in I}$ is a family 
of
smooth Riemannian metrics with associated volume measures $(\textrm{vol}_t)_{t\in I}$. The metrics satisfy the super-Ricci flow 
estimate
\begin{align*}
\Ric_t\geq-\frac12\partial_tg_t
\end{align*}
if and only if $i),ii)$ or $iii)$ of Theorem \ref{thm3intro} is satisfied. See also Corollary 1 in \cite{mccanntopping}, 
Theorem 4.1 in \cite{acthorizontal}, and Theorem 1.5 in \cite{HN2015}.

E.g. the Ricci flow $\Ric_t=-\frac12\partial_tg_t$ introduced by Hamilton in \cite{Ham1} defines a super-Ricci flow. But
this ``minimal'' Ricci flow is not induced by $i),ii)$ or $iii)$ of Theorem \ref{thm3intro}. Haslhofer and Naber characterize
Ricci flows via functional inequalities on the path space,
see Theorem 1.22 in \cite{HN2015}.
\end{ex}
\begin{ex}
Smooth weighted spaces: Let $(M,g_t,m_t)_{t\in I}$ be a family of smooth manifolds, where 
$(g_t)_{t\in I}$ are smooth Riemannian metrics and the volume measures $(m_t)_{t\in I}$ are given by
$dm_t=e^{-f_t}\, d\textrm{vol}_t$. Then, $g_t$ and $f_t$ satisfy the estimate
\begin{align*}
\Ric_t+\nabla^2_t f_t\geq-\frac12\partial_tg_t,
\end{align*}
if and only if $i),ii)$ or $iii)$ of Theorem \ref{thm3intro} is satisfied,
cf. Theorem 0.2 in \cite{sturm2015}.
\end{ex}
\begin{ex}
Wandering Gaussian: Let $(X,d_t,m_t)_{t\in I}$ such that $X=\mathbb R^n$, $d_t(x,y)=||x-y||$ and $dm_t=e^{-f_t}dx$ with
\begin{align}
f_t(x)=\langle x,\alpha_t\rangle^2+\langle x,\beta_t\rangle +\gamma_t,
\end{align}
where $\alpha,\beta\colon I\to\mathbb R^n$ and $\gamma\colon I\to \mathbb R$ are arbitrary functions. Then this is a super-Ricci flow 
if and only if $i),ii)$ or $iii)$ of Theorem \ref{thm3intro} is satisfied.
\end{ex}
\begin{ex}
Homothetic flows: Let $(X,d_t,m_t)_{t\in I}$, where $m_t=m$ and 
\begin{align*}
d_t^2=d^2(1-2Kt),
\end{align*}
 such that $(X,d,m)$ satisfies RCD$(K,N)$. Then this is a super-Ricci flow if and only if
 $i),ii)$ or $iii)$ of Theorem \ref{thm3intro} is satisfied, cf. Proposition 2.7 in \cite{sturm2015} and Example \ref{homo} below.
\end{ex}
  \begin{ex}\label{homo}
  A non-smooth example for an homothetic flow can be constructed on
  the spherical cone $\Sigma$ over $M=S^2(1/\sqrt{3})\times S^2(1/\sqrt{3})$ with product metric $d$ as in \cite{sturm2016}: The spherical cone is defined as the quotient of $M\times [0,\pi]$ by contracting $M\times\{0\}$ to the south pole $S$ and $M\times \{\pi\}$ to the north pole $N$ with metric
\begin{align}
\cos(d_\Sigma((x,s),(x',s'))):=\cos s\cos s'+\sin s\sin s'\cos(d(x,x')\wedge\pi)
\end{align}
and measure $dm_\Sigma(x,s):=d\textrm{vol}(x)\otimes(\sin^4 s\, ds)$.

  This space is a RCD$(4,5)$-space (see \cite[Thorem 3.5]{bacher}), and the punctured cone $\Sigma_0:=\Sigma\setminus\{S,N\}$
  is a 5-dimensional (non-complete) Riemannian manifold with metric $g_0$ and constant curvature $\Ric_{g_0}=4$ 
  (see \cite[Lemma 3.6]{bacher}). A possible Ricci flow on the punctured cone is then given by
\begin{align*}
g_t=(1-8t)g_0.
\end{align*}
The associated metric measure spaces of $(\Sigma_0,g_t,\textrm{vol}_t)_{t\in I}$ build a super-Ricci flow for all 
$\bar I\subset [0,1/8)$, since the geodesics over the poles play no role in optimal transport, cf. \cite[Theorem 3.2]{bacher}. Let us remark that the sectional curvature of the punctured spherical cone is neither bounded from below nor from above.
  \end{ex}

Let us briefly make some remarks about related literature.

Ricci flows have been introduced by Hamilton in \cite{Ham1}. They gained a lot of attention when
Perelman utilized them for proving the Poincar\'e conjecture for 3-manifolds with positive Ricci curvature \cite{perelman2002entropy,perelman2003ricci,perelman2003finite}. 

In respect of optimal transport, McCann and Topping \cite{mccanntopping} showed that the $L^2$-transport estimate 
holds if and only if the underlying compact manifolds evolve as a super-Ricci flow, indicating how super-Ricci flows should be defined in a weaker context.

Coming from a probabilistic viewpoint, Arnaudon, Coulibaly and Thalmaier \cite{acthorizontal} refined McCann's and Topping's result by showing a pathwise estimate: They prove the existence of coupled Brownian motions with pathwise contraction. Their result implies the stronger $L^1$-gradient estimate.

Kuwada and Philipowski \cite{kuwada2011coupling} construct couplings of Brownian motions such that the normalized Perelman's $\mathcal L$-distance of the coupling is a supermartingale, which implies transport estimates for more general costs, see also the results in \cite{toppingoptimal}.
This construction is obtained on smooth Riemannian manifolds evolving as a backward super-Ricci flow.

Haslhofer and Naber characterize in \cite{HN2015} Ricci flows in terms of functional inequalities on the path space of smooth
Riemannian manifolds. Inserting one-point cylinder functions implies the characterization of super-Ricci flows in terms of a gradient estimate with $\alpha=1$ and $\alpha=1/2$
and in terms of a Bochner formula. \\

The plan of the paper goes as follows. First we will recall the notion of heat flows on time-dependent metric measure
spaces introduced in \cite{sturm2016} and state the characterization of super-Ricci flows in terms of the heat flow. 
In Section \ref{sec:brownian} we introduce backward Brownian motions on time-dependent metric measure spaces and prove 
their existence and uniqueness. We anticipate the $p$-transport estimates \eqref{eq:strongtransport} and prove that 
they imply contraction estimates for backward Brownian motions. The proof of Theorem \ref{thm3intro} is arranged in 
Section \ref{sec:transport}. For this we first introduce a new dynamic Bochner inequality from which we deduce the 
gradient estimate with exponent $\alpha\in[1/2,1]$. In Section \ref{sec:bochner} we show that this dynamic Bochner 
inequality indeed holds under super-Ricci flow. Finally we gather in Section \ref{sec:proofofeq} the last steps to 
prove Theorem \ref{thm3intro}.

\section{Preliminaries}\label{sec:proofofmain}
In the sequel let $(X,d_t,m_t)_{t\in I}$, where $I=(0,T)$, be a one-parameter family of geodesic Polish metric measure spaces such that the following holds:
\begin{enumerate}
 \item There exists a finite reference measure $m$ with full topological support such that $m_t=e^{-f_t}m$ with Borel functions $(f_t)$ satisfying 
 \begin{align}\label{assumption1}
|f_t(x)|\leq C,\quad |f_t(x)-f_t(y)|\leq Cd_t(x,y),\quad|f_t(x)-f_s(x)|\leq L|t-s|,
 \end{align}
with constants $C,L>0$ independent of $x,y\in X$ and $s,t\in I$.
\item the distance is ``log-Lipschitz'' continuous, i.e.
\begin{align}\label{assumption2}
 |\log(d_t(x,y)/d_s(x,y))|\leq L|t-s|
\end{align}
for all $x,y\in X$ and all $s,t\in I$,
\item there exist constants $K,N\in\mathbb R$ such that for each $t\in I$
 the space $(X,d_t,m_t)$ satisfies the Riemannian curvature-dimension bound RCD$(K,N)$.
\end{enumerate}

RCD$(K,N)$-spaces are metric measure spaces which are infinitesimally Hilbertian and satisfy the curvature dimension condition 
CD$(K,N)$. CD$(K,N)$-spaces have been introduced independently by Sturm \cite{sturm2006} and Lott-Villani \cite{lott2009ricci}.
In order to rule out Finsler spaces Ambrosio, Gigli and Savar\'e introduced in \cite{agsmet} the notion
infinitesimally Hilbertian, also called Riemannian. CD$(K,N)$-spaces are stable under measured Gromov-Hausdorff convergence,
infinitesimally Hilbertian spaces not. As observed in \cite{giglionthe} RCD$(K,N)$-spaces are stable, which has been finally 
proven in 
\cite{eks2014}.

Under mild regularity assumptions ``$(X,d_t,m_t)$ satisfies RCD$(K,N)$" means that $(X,d_t,m_t)$ is infinitesimally 
Hilbertian, i.e. $\Delta_t$ is a linear operator, and
\begin{align*}
\frac12\int \Delta_t g\Gamma_t(f)\, dm-\int g\Gamma_t(\Delta_t f,f)\, dm\geq K\int g\Gamma_t(f)\, dm+\frac1N\int g(\Delta_t f)^2\, dm
\end{align*}
holds for all $g\in \Dom(\Delta_t)$ bounded and nonnegative with $\Delta_t g$ bounded and all $f\in \Dom(\Delta_t)$ 
with $\Delta_t f\in W^{1,2}(X,m)$, see \cite {eks2014}. 

In the following we will explain the metric setup in more detail, i.e. briefly recalling the main ideas 
from \cite{agscalc,agsmet, agsbe, giglionthe}.

The Cheeger energy $\Ch_t$ at time $t\in I$ is defined as the convex and lower-semicontinuous functional in $L^2(X,m_t)$ 
\begin{align*}
\Ch_t(u):=\inf\Big\{\liminf_{n\to\infty}\frac12\int_X\lip_t(u_n)^2\, dm_t\Big\}
\end{align*}
where the infimum is taken over all bounded Lipschitz functions $u_n\in\Lip_b(X)$ such that $u_n\to u$ in $L^2(X,m_t)$ (cf. \cite{agscalc,sturm2015}). Here, $\lip_tu$ denotes the local Lipschitz constant w.r.t. the metric $d_t$
\begin{align*}
\lip_tu(x):=\limsup_{y\to x}\frac{|u(y)-u(x)|}{d_t(x,y)},
\end{align*}
and
$\Ch_t$ admits the local representation formula
\begin{align*}
\Ch_t(u)=\frac12\int_X|\nabla_tu|_*^2\, dm_t,
\end{align*}
where $|\nabla_tu|_*$ is the minimal relaxed gradient \cite{agscalc}.
Since $(X,d_t,m_t)$ satisfies a Riemannian curvature bound, (in particular $\Ch_t$ is quadratic) $\E_t:=2\Ch_t$ is a strongly local Dirichlet form with Carr\'e du Champ 
\begin{align*}
\Gamma_t(u)=|\nabla_tu|^2_*
\end{align*}
cf. \cite{savare,agsbe,agsmet}, i.e.
\begin{align}\label{gamma}
 \E_t(u)=\int_X\Gamma_t(u)\, dm_t.
\end{align}
Thanks to \eqref{gamma}, $\E(u,v)=\int_X\Gamma_t(u,v)\, dm_t$ where
\begin{align*}
 \Gamma_t(u,v):=\frac14(\Gamma_t(u+v)-\Gamma_t(u-v)).
\end{align*}
$\Gamma(\cdot,\cdot)$ satisfies the chain rule and the Leibniz rule
\begin{align*}
 \Gamma_t(\theta(u),v)=\theta'(u)\Gamma_t(u,v),\quad \Gamma_t(uv,w)=u\Gamma_t(v,w)+v\Gamma_t(u,w),
\end{align*}
where $u,v,w\in\Dom(\E_t)$ and $\theta\in\Lip(\mathbb R)$, $\theta(0)=0$.
We call the linear generator $\Delta_t$ the Laplacian and 
$$-\int_X \Delta_tu \, v\,dm_t=\E_t(u,v)\qquad\forall u\in\Dom(\Delta_t), v\in \Dom(\E_t),$$
with domain $\Dom(\Delta_t)\subset\Dom(\E_t)$.

Due to our assumptions \eqref{assumption1} and \eqref{assumption2}, the sets $L^2(X,m_t)$ and $W^{1,2}(X,d_t,m_t):=\mathcal D(\E_t)$ do not depend on $t$ 
and the respective norms for varying $t$ are equivalent to each other.
We put  $\mathcal H= L^2(X,m)$ and  $\F=\Dom(\E_{t_0})$ for some fixed $t_0$
as well as  $$\F_{(s,\tau)}=L^2\big((s,\tau)\to \F\big)\cap  H^1\big( (s,\tau)\to \F^*\big)\subset {\mathcal C}\big([s,\tau]\to \mathcal H\big)$$ for each 
$0\le s<\tau\le T$.

\subsubsection*{The heat equations}
A function $u$ is called \emph{solution to the heat equation}
$$\Delta_t u=\partial_t u\qquad\mbox{on }(s,\tau)\times X$$
if $u\in \F_{(s,\tau)}$ and if for all $w\in \F_{(s,\tau)}$
\begin{equation}\label{def-heat}
-\int_s^\tau \E_t(u_t,w_t)dt
=\int_s^\tau \langle\partial_tu_t, w_te^{-f_t}\rangle_{\mathcal F^*,\mathcal F}\,dt
\end{equation}
where $\langle\cdot,\cdot\rangle_{\mathcal F^*,\mathcal F}=\langle\cdot,\cdot\rangle$ denotes the dual pairing. Note that thanks to \eqref{assumption1}, 
$w\in L^2\big((s,\tau)\to \F\big)$ if and only if $we^{-f}\in L^2\big((s,\tau)\to \F\big)$.

Further a function $v$ is called \emph{solution to the adjoint heat equation}
$$-\Delta_s v +\partial_s f \cdot v=\partial_s v\qquad\mbox{on }(\sigma,t)\times X$$
if $v\in \F_{(\sigma,t)}$ and if for all $w\in \F_{(\sigma,t)}$
$$\int_\sigma^t \E_s(v_s,w_s)ds+\int_\sigma^t \int_X v_s\cdot w_s\cdot \partial_sf_s\,dm_s\,ds=\int_\sigma^t \langle\partial_sv_s, w_se^{-f_s}\rangle_{\F^*,\F}\,ds.$$

We recall the following results from \cite{sturm2016}.

\begin{thm}\label{thm:kernels}
(i) For each $0\le s<\tau\le T$ and each $h\in \mathcal H$ there exists a unique solution  $u\in \F_{(s,\tau)}$ to the heat equation $\partial_tu_t=\Delta_tu_t$ on $(s,\tau)\times X$ with $u_s=h$.

(ii) The heat propagator $P_{t,s}: h\mapsto u_t$ admits a kernel $p_{t,s}(x,y)$ w.r.t.\ $m_s$, i.e.
\begin{equation}
\label{heat-kernel}P_{t,s}h(x)=\int p_{t,s}(x,y) h(y)\, dm_s(y).
\end{equation}
If $X$ is bounded, for each $(s',y)\in (s,T)\times X$ the function $(t,x)\mapsto p_{t,s}(x,y)$ is a solution to the heat equation on $(s',T)\times X$.

(iii) All  solutions $u: (t,x)\mapsto u_t(x)$ to the heat equation on $(s,\tau)\times X$ are H\"older continuous in $t$ and $x$. All nonnegative solutions satisfy a scale invariant parabolic Harnack inequality of Moser type.

(iv) The heat kernel $p_{t,s}(x,y)$ is H\"older continuous in all variables, it is Markovian 
$$\int p_{t,s}(x,dy):=\int p_{t,s}(x,y)\,dm_s(y)=1\qquad\quad (\forall s<t, \forall x)$$
and has the propagator property
$$p_{t,r}(x,z)=\int p_{t,s}(x,y)\, p_{s,r}(y,z)\, dm_s(y)\qquad\quad (\forall r<s<t, \forall s,z).$$
\end{thm}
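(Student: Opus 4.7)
The plan is to treat the four parts essentially in order, leveraging that the assumptions \eqref{assumption1}--\eqref{assumption2} force the bilinear forms $\mathcal E_t$ to be mutually equivalent on a single Hilbert space $\mathcal F$ with a Lipschitz time dependence, and that each $(X,d_t,m_t)$ satisfies the uniform RCD$(K,N)$ bound.

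For (i) I would set up the problem in the framework of J.L.\ Lions for evolution equations with time-dependent bilinear forms. Using \eqref{assumption1}--\eqref{assumption2} one verifies that the family $\{\mathcal E_t\}_{t\in I}$ is (uniformly) coercive and bounded on $\mathcal F$ with norms equivalent to $\mathcal E_{t_0}$, and that $t\mapsto \mathcal E_t(u,v)$ is Lipschitz for every $u,v\in\mathcal F$. The heat equation \eqref{def-heat} is then the weak formulation of an abstract Cauchy problem in the triple $\mathcal F\hookrightarrow \mathcal H\hookrightarrow \mathcal F^*$ with the weights $e^{-f_t}$ absorbed via the equivalence $w\in L^2(\mathcal F)\Leftrightarrow we^{-f}\in L^2(\mathcal F)$. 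Existence follows by a Galerkin approximation using a basis of eigenfunctions of $\Delta_{t_0}$ and a standard compactness argument; uniqueness is obtained by testing the difference of two solutions against itself, integrating in time and applying Gr\"onwall using the Lipschitz dependence of $t\mapsto\mathcal E_t$. This gives the propagator $P_{t,s}:\mathcal H\to\mathcal H$.

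For (ii) I would first upgrade $P_{t,s}$ to an $L^1\to L^\infty$ bounded operator: the RCD$(K,N)$ condition uniformly in $t$ yields a time-uniform Nash/Sobolev inequality, which combined with the Lipschitz form-evolution gives ultracontractivity. The Dunford--Pettis theorem then produces a measurable kernel $p_{t,s}(x,y)$ with respect to $m_s$ satisfying \eqref{heat-kernel}; symmetry of each $\mathcal E_t$ and the identity $\int P_{t,s}h\cdot g\, dm_t=\int h\cdot P^*_{t,s}g\, dm_s$ (with $P^*_{t,s}$ the dual heat flow) fixes the kernel up to $m\otimes m$-null sets. That $(t,x)\mapsto p_{t,s}(\cdot,y)$ solves the heat equation on $(s',T)\times X$ follows by applying $P_{t',t}$ to $p_{t,s}(\cdot,y)\in \mathcal H$ and using the semigroup property together with the uniqueness in (i).

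For (iii) and (iv) the key input is a parabolic De Giorgi--Nash--Moser theory for time-dependent Dirichlet forms. The uniform RCD$(K,N)$ bound supplies volume doubling and local Poincar\'e inequalities that are uniformly comparable in $t$ by \eqref{assumption1}--\eqref{assumption2}; combined with the Lipschitz time-dependence of $\mathcal E_t$ this fits into Sturm's framework for parabolic Harnack inequalities, giving the scale-invariant Moser--Harnack estimate and joint H\"older continuity of nonnegative solutions. Applied to $p_{t,s}(\cdot,\cdot)$ in the forward variables and, by symmetrizing the roles via the adjoint heat equation, also in the backward variables, this yields joint H\"older regularity of the kernel. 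The Markov property $\int p_{t,s}(x,dy)=1$ is a consequence of uniqueness: the constant function $1$ solves the heat equation on any $(s,\tau)\times X$ since $\Delta_t 1=0$, so $P_{t,s}1\equiv 1$. The propagator identity is the same uniqueness argument applied to $t\mapsto P_{t,s}P_{s,r}h$ and $t\mapsto P_{t,r}h$, which agree at $t=s$ and both solve \eqref{def-heat}. The principal obstacle is (iii): adapting Moser iteration and Harnack chaining to a setting where the geometry (and hence doubling/Poincar\'e constants) genuinely move with $t$ requires carefully quantified $t$-uniform versions of these inequalities, and this is where the log-Lipschitz hypothesis \eqref{assumption2} is indispensable.
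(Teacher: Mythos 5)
You should first be aware that this paper contains no proof of Theorem \ref{thm:kernels}: it is quoted verbatim from \cite{sturm2016} (``We recall the following results from \cite{sturm2016}''), so what you are really sketching is a re-proof of results of that reference. Measured against how those results are actually established there, your outline follows essentially the right strategy: existence and uniqueness via the Lions theory of time-dependent coercive forms on the triple $\F\hookrightarrow\mathcal H\hookrightarrow\F^*$, using \eqref{assumption1}--\eqref{assumption2} to make the forms uniformly comparable and Lipschitz in $t$; H\"older regularity and the scale-invariant Harnack inequality from the parabolic Moser theory for time-dependent Dirichlet forms (in the literature this is Sturm's time-dependent framework as refined by Lierl--Saloff-Coste, which is exactly the source \cite{lierl2015} this paper invokes for Gaussian bounds in Lemma \ref{continuitylp}), with the uniform RCD$(K,N)$ bound supplying the $t$-uniform doubling and Poincar\'e inequalities; and the Markov and propagator properties from $P_{t,s}1=1$ and uniqueness, with regularity in the backward variables obtained through the adjoint equation. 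Your identification of (iii) as the place where the moving geometry must be handled quantitatively is accurate.

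Two steps of your sketch are soft. First, the Dunford--Pettis route to the kernel requires global ultracontractivity $P_{t,s}\colon L^1\to L^\infty$, and RCD$(K,N)$ with a finite measure does not give a global Nash/Sobolev inequality when $X$ is unbounded (note the theorem itself only assumes $X$ bounded for one sub-claim, so the kernel must exist without that hypothesis); the safer argument, and the one consistent with the cited works, is to use the local parabolic $L^2\to L^\infty_{\mathrm{loc}}$ bounds from Moser iteration to see that $h\mapsto P_{t,s}h(x)$ is a bounded functional on $L^2(m_s)$ for each $x$, so that Riesz representation yields $p_{t,s}(x,\cdot)\in L^2(m_s)$ directly. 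Second, a Galerkin scheme built on eigenfunctions of $\Delta_{t_0}$ presupposes discrete spectrum, which is not guaranteed here; the standard Lions argument with an arbitrary countable dense family in $\F$ avoids this, and in the uniqueness/Gr\"onwall step you must also account for the time-dependence of the measure (the extra term $\int u_t^2\,\partial_t f_t\,dm_t$ coming from $m_t=e^{-f_t}m$, controlled by $|\partial_t f|\le L$), not only the Lipschitz dependence of the forms.
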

\begin{thm} 
(i) For each $0\le\sigma<t\le T$ and each $g\in \mathcal H$ there exists a unique solution  $v\in \F_{(0,t)}$ to the adjoint heat equation $\partial_sv_s=-\Delta_sv_s+(\partial_s f_s)v_s$ on $(\sigma,t)\times X$ with $v_t=g$.

(ii) This solution is given as $v_s(y)= P^*_{t,s}g(y)$ in term of the adjoint  heat propagator 
\begin{equation}
 P^*_{t,s}g(y)=\int p_{t,s}(x,y) g(x)\, dm_t(x).
\end{equation}
If $X$ is bounded, for each $(t',x)\in (0,t)\times X$ the function $(s,y)\mapsto p_{t,s}(x,y)$ is a solution to the adjoint heat equation on $(0,t')\times X$.

(iii) All  solutions $v: (s,y)\mapsto v_s(y)$ to the adjoint heat equation on $(\sigma,t)\times X$ are H\"older continuous in $s$ and $y$. All nonnegative solutions satisfy a scale invariant parabolic Harnack inequality of Moser type.
\end{thm}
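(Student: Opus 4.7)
The whole theorem reduces, by a weighted duality identity against the forward heat flow of Theorem \ref{thm:kernels}, to uniqueness plus a time-reversal that transfers the known parabolic regularity.

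The first step is the duality identity. Fix $\sigma\le s<t$, $h\in\mathcal H$, and let $u_r:=P_{r,s}h$ be the forward solution on $[s,t]$ provided by Theorem \ref{thm:kernels}(i). For any candidate solution $v\in\F_{(\sigma,t)}$ of the adjoint equation with $v_t=g$, compute the distributional derivative of $r\mapsto \int_X u_rv_r\,dm_r=\int_X u_rv_r\,e^{-f_r}\,dm$. The product rule produces three contributions: $\partial_ru_r$, replaced via \eqref{def-heat} by $-\E_r(u_r,v_r)$; $\partial_rv_r$, replaced via the weak form of the adjoint equation by $\E_r(v_r,u_r)+\int u_rv_r\,\partial_rf_r\,dm_r$; and the density-derivative term $-\int u_rv_r\,\partial_rf_r\,dm_r$. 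All four terms cancel, and so $r\mapsto\int u_rv_r\,dm_r$ is constant on $[s,t]$.

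Evaluating this identity at $r=s$ and $r=t$ and using the kernel representation \eqref{heat-kernel} with Fubini gives
\[
\int_X h(y)\,v_s(y)\,dm_s(y)=\int_X (P_{t,s}h)(x)\,g(x)\,dm_t(x)=\int_X h(y)\Bigl(\int_X p_{t,s}(x,y)g(x)\,dm_t(x)\Bigr)dm_s(y)
\]
for every $h\in\mathcal H$, forcing $v_s=P^*_{t,s}g$ $m_s$-a.e. This yields uniqueness in (i) and the formula in (ii) simultaneously. Existence then follows by defining $v_s:=P^*_{t,s}g$ and verifying the weak formulation by running the duality computation in reverse against an arbitrary $w\in\F_{(\sigma,t)}$, whose forward evolution is again handled by Theorem \ref{thm:kernels}(i); membership of $v$ in $\F_{(\sigma,t)}$ follows from uniform energy bounds for the heat kernel combined with the equivalence of the $\F_t$ norms under \eqref{assumption1}, \eqref{assumption2}. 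The second assertion in (ii), that $(s,y)\mapsto p_{t,s}(x,y)$ solves the adjoint equation on $(0,t')\times X$, then reduces to the first: by the propagator property of Theorem \ref{thm:kernels}(iv), $p_{t,s}(x,y)=\int p_{t,t'}(x,z)p_{t',s}(z,y)\,dm_{t'}(z)=P^*_{t',s}\bigl(p_{t,t'}(x,\cdot)\bigr)(y)$, which is precisely the solution furnished by (ii) with terminal datum $g=p_{t,t'}(x,\cdot)$.

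For (iii), I would apply the substitution $\tilde v_r(y):=v_{t+\sigma-r}(y)$, which turns the adjoint equation into the forward equation $\partial_r\tilde v_r=\tilde\Delta_r\tilde v_r-\tilde h_r\tilde v_r$ with $\tilde\Delta_r:=\Delta_{t+\sigma-r}$ and $\tilde h_r:=(\partial_sf_s)|_{s=t+\sigma-r}$, which is uniformly bounded by \eqref{assumption1}. The time-reversed family still satisfies \eqref{assumption1}, \eqref{assumption2} and RCD$(K,N)$, and the zero-order term is absorbed by the gauge $\tilde v_r\exp\!\bigl(\int_0^r\tilde h_\rho\,d\rho\bigr)$, so H\"older continuity and the Moser-type Harnack inequality transfer directly from Theorem \ref{thm:kernels}(iii). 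The main obstacle I anticipate is the bookkeeping in the duality step: the weight $e^{-f_r}$ must be differentiated correctly so that the cancellation produces exactly the $(\partial_sf_s)v$ drift of the adjoint equation, and the $e^{-f_r}$ factor sitting in front of the test function inside the dual pairings of \eqref{def-heat} must be tracked with care. A secondary but non-negligible point is establishing enough time regularity of $s\mapsto P^*_{t,s}g$ to place it in $\F_{(\sigma,t)}$, which ultimately rests on uniform spectral bounds for the heat kernel obtained by testing the forward equation against suitable truncations.
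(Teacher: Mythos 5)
This theorem is only recalled in the paper from \cite{sturm2016}, so there is no in-paper proof to compare with; judged on its own, your duality core is sound and is essentially the mechanism used in \cite{sturm2016}: for $u_r=P_{r,s}h$ and any adjoint solution $v$, the map $r\mapsto\int u_rv_r\,dm_r$ is constant because the $\partial_rf_r$ term in the adjoint equation cancels against the derivative of the weight $e^{-f_r}$, and this gives uniqueness together with the representation $v_s=P^*_{t,s}g$; the reduction of the second claim in (ii) to the first via the propagator property is also fine. Two steps, however, do not work as written. For existence, the cancellation in your computation depends crucially on the test function being itself a forward heat flow, so ``running the duality in reverse against an arbitrary $w\in\F_{(\sigma,t)}$'' does not verify the weak formulation, and you have not produced the membership $s\mapsto P^*_{t,s}g\in\F_{(\sigma,t)}$ (square-integrability in $\F$ plus an $\F^*$-valued time derivative); ``uniform energy bounds for the heat kernel'' is not an argument, and the energy estimate of Theorem \ref{energy-estEva}(iii) applies to solutions, so it cannot be quoted before existence is known. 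The workable route is the one of \cite{sturm2016}: apply the same abstract existence machinery as in Theorem \ref{thm:kernels}(i) to the time-reversed equation, which is again a divergence-form parabolic equation with a zero-order coefficient $\partial_sf_s$ bounded by \eqref{assumption1}, and only then identify the constructed solution with $P^*_{t,s}g$ through your duality identity.

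The second genuine gap is in (iii). The gauge $\tilde v_r\exp\bigl(\int_0^r\tilde h_\rho\,d\rho\bigr)$ does not absorb the zero-order term: $\tilde h_\rho=\partial_sf_s$ depends on the space variable, so the gauge factor is space-dependent and multiplying by it creates first-order cross terms $\Gamma_r(\tilde v,\cdot)$ (besides requiring regularity of $\partial_sf_s$ in $x$ that is not available -- it is merely bounded measurable). Moreover Theorem \ref{thm:kernels}(iii) is stated for the unperturbed equation $\partial_tu=\Delta_tu$, so regularity does not ``transfer directly'' by this reduction. What actually yields H\"older continuity and the Moser-type Harnack inequality for the adjoint equation is that the local regularity theory for time-dependent Dirichlet forms of Lierl and Saloff-Coste \cite{lierl2015} covers forms with bounded lower-order terms; the correct move is to invoke that theory for the time-reversed equation with its bounded potential, not to gauge the potential away.
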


By duality, the propagator $(P_{t,s})_{s\le t}$ acting on bounded continuous functions induces a \emph{dual propagator} $(\hat P_{t,s})_{s\le t}$ acting on 
probability measures as follows
\begin{equation*}
\int u\, d(\hat P_{t,s}\mu)=\int (P_{t,s}u)d\mu\qquad\forall u\in{\mathcal C}_b(X),  \forall \mu\in\mathcal P(X),
\end{equation*}
where $\mathcal P(X)$ denotes the space of all Borel probability measures.

The time-dependent function $v_t(x)= P_{t,s}u(x)$ is a solution to the  heat equation, whereas
the time-dependent measure $\nu_s(dy)=\hat P_{t,s}\mu(dy)$ is a solution to the \emph{dual heat equation}
$$-\partial_s \nu=\hat\Delta_s \nu.$$
Again $\hat \Delta_s$ is defined by duality:
$\int u\, d(\hat \Delta_s\mu)=\int \Delta_s u \,d\mu\quad\forall u, \forall\mu.$

\begin{lma}[Lemma 3.8 in \cite{sturm2016}]\label{P*Eva}
 Let $u,g\in\F$ and $t\in I$ with $g\in L^1(X,m_t)$. 
 Then, 
 \begin{align*}
  \lim_{h\searrow0}\frac1h\left(\int ugdm_t-\int uP_{t,t-h}^*gdm_{t-h}\right)=\int\Gamma_t(u,g)dm_t.
 \end{align*}

\end{lma}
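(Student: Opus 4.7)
The natural strategy is to use the weak formulation of the adjoint heat equation tested against a function that is constant in time. I would set $v_s := P^*_{t,s}g$ for $s \in [t-h,t]$, so that $v \in \F_{(t-h,t)}$ with $v_t = g$. Since $v \in H^1((t-h,t) \to \F^*)$ and $f_s$ is Lipschitz in $s$ by \eqref{assumption1}, the map $s \mapsto \int u v_s\, dm_s = \int u v_s e^{-f_s}\, dm$ is absolutely continuous, and
\[
\int u g\, dm_t - \int u P^*_{t,t-h}g\, dm_{t-h} = \int_{t-h}^t \langle \partial_s v_s, u e^{-f_s}\rangle\, ds - \int_{t-h}^t \int u v_s\, \partial_s f_s\, dm_s\, ds.
\]

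The key step is then to plug the constant-in-time function $w_s \equiv u \in \F$ into the weak form of the adjoint heat equation. Since $\partial_s w_s = 0$, this gives
\[
\int_{t-h}^t \langle \partial_s v_s, u e^{-f_s}\rangle\, ds = \int_{t-h}^t \E_s(v_s, u)\, ds + \int_{t-h}^t \int u v_s\, \partial_s f_s\, dm_s\, ds.
\]
Substituting this into the previous identity, the two $\partial_s f_s$ contributions cancel exactly, leaving the clean identity
\[
\int u g\, dm_t - \int u P^*_{t,t-h}g\, dm_{t-h} = \int_{t-h}^t \int \Gamma_s(P^*_{t,s}g, u)\, dm_s\, ds.
\]
Dividing by $h$ and letting $h \searrow 0$ would then recover the desired limit $\int \Gamma_t(u,g)\, dm_t$, using the symmetry of $\Gamma_t$ in its two arguments.

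The main obstacle is justifying that this final averaging really converges pointwise in $t$, i.e.\ that $s \mapsto \int \Gamma_s(P^*_{t,s}g,u)\, dm_s$ is left-continuous at $s = t$. I would handle this by establishing $\F$-continuity of the adjoint heat flow at the endpoint: testing the adjoint heat equation against $w_s = v_s$ itself and applying the same absolute continuity argument yields an energy identity of the form
\[
\|v_\sigma\|^2_{L^2(m_\sigma)} + 2\int_\sigma^t \E_s(v_s)\, ds = \|g\|^2_{L^2(m_t)} - \int_\sigma^t \int v_s^2\, \partial_s f_s\, dm_s\, ds,
\]
from which the integrability of $\E_s(v_s)$ near $s = t$ and the convergence $\E_s(v_s) \to \E_t(g)$ as $s \nearrow t$ (using $v_t = g \in \F$) follow in the standard fashion. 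Together with the $L^2$-convergence $v_s \to g$, this upgrades to strong $\F$-convergence, which, combined with the log-Lipschitz equivalence of the forms $\E_s$, gives the required left-continuity of the integrand at $s = t$ and completes the proof.
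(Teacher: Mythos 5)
The paper merely cites this lemma from \cite{sturm2016} and gives no proof, so there is no internal proof to compare against; I will therefore assess your argument on its own terms. Your overall strategy is sound and is, I believe, essentially the original one. The reduction
\[
\int u g\, dm_t - \int u P^*_{t,t-h}g\, dm_{t-h} \;=\; \int_{t-h}^t \E_s(v_s, u)\, ds, \qquad v_s := P^*_{t,s}g,
\]
obtained by differentiating $s\mapsto\int u v_s\, dm_s$ and plugging the constant-in-time test function $w\equiv u$ into the weak form of the adjoint heat equation, is correct: the two $\partial_s f_s$ contributions do cancel exactly, and the problem reduces to the left-continuity of $s\mapsto\E_s(v_s,u)$ at $s=t$.

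The only genuine soft spot is your justification of that continuity. You assert that the energy identity obtained by testing against $w_s=v_s$ yields ``the integrability of $\E_s(v_s)$ near $s=t$ and the convergence $\E_s(v_s)\to\E_t(g)$ \ldots\ in the standard fashion.'' But the identity
\[
\|v_\sigma\|^2_{L^2(m_\sigma)} + 2\int_\sigma^t \E_s(v_s)\, ds = \|g\|^2_{L^2(m_t)} - \int_\sigma^t \int v_s^2\, \partial_s f_s\, dm_s\, ds
\]
only controls the \emph{integral} of the energy, not its pointwise behaviour. To obtain the pointwise upper bound $\limsup_{s\nearrow t}\E_s(v_s)\leq\E_t(g)$ one needs a one-sided propagation estimate for the energy along the adjoint flow, which is exactly Theorem \ref{energy-estEva}(iii) in the paper:
\[
\E_s(v_s)+\|v_s\|^2_{L^2(m_s)}\leq e^{3L(t-s)}\Big[\E_t(g)+\|g\|^2_{L^2(m_t)}\Big].
\]
Proving this from scratch requires differentiating $s\mapsto\E_s(v_s)$ and controlling the $\partial_s\Gamma_s$ contribution via the log-Lipschitz hypothesis (a Gronwall argument), which the energy identity alone does not give. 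Once this is invoked, the rest goes through as you say: the matching $\liminf$ bound follows from lower semicontinuity of the Cheeger energy together with the form equivalence $e^{-3L|t-s|}\E_s\leq\E_t\leq e^{3L|t-s|}\E_s$ implied by \eqref{assumption1}--\eqref{assumption2}; norm convergence plus weak $\F$-convergence upgrades to strong $\F$-convergence; and $\E_s(g,u)\to\E_t(g,u)$ follows by polarization, the pointwise two-sided bound on $\Gamma_s$, and dominated convergence. With the explicit reference to Theorem \ref{energy-estEva}(iii) (or an equivalent Gronwall estimate) inserted, your proof is complete.
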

\begin{thm}[Theorem 2.12 in \cite{sturm2016}] \label{energy-estEva}
 For all $0<s<\tau<T$ and for all solutions $u\in \F_{(s,T)}$ to the heat equation
\begin{itemize}
\item[(i)] 
$u_t\in\Dom(\Delta_t)$ for a.e.\ $t\in (s,\tau)$.

\item[(ii)]
If the initial condition $u_s\in\F$ then
$$u\in L^2\big((s,\tau)\to \Dom(A_\cdot\big)\cap H^1\big( (s,\tau)\to \mathcal H\big).$$
More precisely,
\begin{equation}\label{tre}
e^{-3L\tau}\E_\tau(u_\tau)+2\int_s^\tau e^{-3Lt}\int_X \big| \Delta_tu_t\big|^2\,dm_t\,dt\le  e^{-3Ls}\cdot \E_s(u_s).
\end{equation}

\item[(iii)]For all solutions $v$ to the adjoint heat equation on $(\sigma,t)\times X$ and all $s\in (\sigma,t)$
$$\E_s(v_s)+\|v_s\|^2_{L^2(m_s)}\le e^{3L(t-s)}\cdot\Big[  \E_t(v_t)+\|v_t\|^2_{L^2(m_t)}\Big].$$
Moreover, 
$v_s\in\Dom(\Delta_s)$
for a.e.\ $s\in(\sigma,t)$.
\end{itemize}

\end{thm}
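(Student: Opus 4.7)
The strategy is to prove (ii) first by a direct energy estimate, deduce (i) by a regularization argument, and handle (iii) through a parallel time-reversed computation for the adjoint equation.

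For (ii), the plan is to control the time derivative of $e^{-3Lt}\E_t(u_t)$ and show it is bounded above by $-2 e^{-3Lt}\int_X(\Delta_t u_t)^2\,dm_t$. Formally,
$$\frac{d}{dt}\E_t(u_t) = \int_X \partial_t\Gamma_t(u_t)\,dm_t + 2\int_X \Gamma_t(u_t,\partial_t u_t)\,dm_t - \int_X \Gamma_t(u_t)\,\partial_t f_t\,dm_t.$$
Three inputs combine. The log-Lipschitz assumption \eqref{assumption2} transfers to the pointwise comparison $e^{-2L|t-s|}\Gamma_s(u)\le \Gamma_t(u)\le e^{2L|t-s|}\Gamma_s(u)$, hence $|\partial_t\Gamma_t(u)|\le 2L\,\Gamma_t(u)$; combined with $|\partial_t f_t|\le L$ from \eqref{assumption1}, the first and third terms together contribute at most $3L\,\E_t(u_t)$. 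For the middle term, substituting the heat equation and integrating by parts via $-\int v\,\Delta_t u\,dm_t=\int \Gamma_t(u,v)\,dm_t$ yields $-2\int_X(\Delta_t u_t)^2\,dm_t$. Altogether
$$\frac{d}{dt}\E_t(u_t) \le 3L\,\E_t(u_t) - 2\int_X (\Delta_t u_t)^2\,dm_t,$$
and integrating after multiplication by $e^{-3Lt}$ produces \eqref{tre}.

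Part (i) follows at once: given a solution $u\in\F_{(s,\tau)}$, an analogous (cheaper) energy computation applied to $\|u_t\|_{L^2(m_t)}^2$ shows that $u_{s'}\in\F$ for a.e.\ $s'\in(s,\tau)$; applying (ii) on $(s',\tau)$ then yields $\Delta_t u_t\in L^2(X,m_t)$ for a.e.\ $t$, i.e.\ $u_t\in\Dom(\Delta_t)$. For (iii) one differentiates $J(s):=\E_s(v_s)+\|v_s\|_{L^2(m_s)}^2$ using $\partial_s v_s=-\Delta_s v_s+(\partial_s f_s)v_s$; after integration by parts, the $\Delta_s v_s$ contributions assemble into a non-negative term $+2\int(\Delta_s v_s)^2\,dm_s$, which can be discarded (and which simultaneously certifies $v_s\in\Dom(\Delta_s)$ for a.e.\ $s$), while the remaining terms are again controlled by $3L\,J(s)$. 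The resulting one-sided bound $\frac{d}{ds}J(s)\ge -3L\,J(s)$ integrated in reverse time gives the stated estimate.

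The main obstacle is rigor. For a heat-equation solution only known a priori to lie in $\F_{(s,\tau)}$, one has $\partial_t u_t\in\F^*$, so the pairing $\E_t(u_t,\partial_t u_t)$ is not immediately meaningful and $t\mapsto\E_t(u_t)$ is not a priori absolutely continuous. The standard workaround is to regularize $u$ in time (Steklov averaging, or a piecewise-linear-in-$t$ Galerkin approximation in $\F$), carry out the computation for the regularization where every manipulation is classical, and pass to the limit using the a priori bound — which is precisely the bound that \eqref{tre} itself provides, so the approximation scheme closes on itself. A completely analogous time-reversed regularization handles (iii), and along the way one must also verify that $|\partial_t\Gamma_t(u)|\le 2L\Gamma_t(u)$ genuinely survives at the level of the Cheeger-energy density (not just formally), which is built into the framework of \cite{sturm2016} via the stability of minimal relaxed gradients under the log-Lipschitz distortion of $d_t$.
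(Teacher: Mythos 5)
This theorem is not proved in the paper at all: it is imported verbatim as Theorem 2.12 of \cite{sturm2016}, so the only meaningful comparison is with the argument in that reference, which is indeed the Gronwall-type a priori estimate you outline. For (i) and (ii) your plan is sound and essentially the standard one: the log-Lipschitz control of $d_t$ transfers to minimal relaxed gradients to give $\Gamma_t(u)\le e^{2L|t-s|}\Gamma_s(u)$ $m$-a.e., which together with $|\partial_t f_t|\le L$ bounds the form-variation by $3L\,\E_t(u_t)$, while substituting $\partial_t u_t=\Delta_t u_t$ produces the dissipative term $-2\int(\Delta_t u_t)^2\,dm_t$; multiplying by $e^{-3Lt}$ and integrating gives \eqref{tre}, and the rigor issue (the pairing $\E_t(u_t,\partial_t u_t)$ is not defined for $u\in\F_{(s,\tau)}$ alone) is correctly identified and is correctly handled by time regularization with passage to the limit by lower semicontinuity. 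A small redundancy: in (i) the ``cheaper'' $L^2$-estimate is unnecessary, since $u\in L^2\big((s,\tau)\to\F\big)$ already guarantees $u_{s'}\in\F$ for a.e.\ $s'$, after which (ii) on $(s',\tau)$ gives $u_t\in\Dom(\Delta_t)$ a.e.

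In (iii), however, there is a genuine gap: the $\Delta_s v_s$-contributions do \emph{not} assemble into a purely nonnegative term. Differentiating $J(s)=\E_s(v_s)+\|v_s\|^2_{L^2(m_s)}$ along $\partial_s v_s=-\Delta_s v_s+(\partial_s f_s)v_s$ produces, besides $+2\int(\Delta_s v_s)^2\,dm_s$, the cross term $-2\int \Delta_s v_s\,(\partial_s f_s)\,v_s\,dm_s$, which is not controlled by $J(s)$ (it involves $\|\Delta_s v_s\|_{L^2(m_s)}$, and $\partial_s f_s$ is only bounded, not Lipschitz in $x$, so no integration by parts is available). It must be absorbed into the positive square term by Young's inequality, which costs at least an extra $\tfrac{L^2}{2}\|v_s\|^2_{L^2(m_s)}$; the best constant your route then yields is $\tfrac{d}{ds}J\ge -\big(L+\tfrac{L^2}{2}\big)J$, i.e.\ $J(s)\le e^{C(t-s)}J(t)$ with $C=L+L^2/2$, which exceeds the stated $3L$ whenever $L>4$. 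So the assertion that ``the remaining terms are again controlled by $3LJ(s)$'' fails as written; you either need a finer treatment of this zeroth-order term (as in the proof in \cite{sturm2016}) or you only recover the estimate with a constant depending quadratically on $L$. The qualitative conclusions of (iii) — an exponential bound and $v_s\in\Dom(\Delta_s)$ for a.e.\ $s$ — do survive your argument (absorb only $\varepsilon<2$ of the square term to retain integrability of $\|\Delta_s v_s\|^2$), but the precise constant $e^{3L(t-s)}$ does not.
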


\subsubsection*{Super-Ricci flows}
Let us recall the notion of super-Ricci flows as defined in \cite{sturm2015}. We denote the relative entropy by
$S\colon I\times\mathcal P(X)\to(-\infty,\infty]$, i.e.
\begin{align*}
 S_t(\mu)=\int \rho\log\rho\, dm_t
\end{align*}
whenever $\mu=\rho m_t$, and $S_t(\mu)=\infty$ otherwise.

We set for each $p\in[1,\infty)$
\begin{align*}
 W_{p,t}(\mu_1,\mu_2)=\min\left\{\int_{X\times X}d_t^p(x,y)\, d\gamma(x,y)|\gamma\in \Pi(\mu_1,\mu_2)\right\}^{1/p},
\end{align*}
where $\Pi(\mu_1,\mu_2)$ is the space of all measures in $\mathcal P(X\times X)$ whose marginals $(e_i)_\#\mu$ coincide with $\mu_i$. We also set
\begin{align*}
W_{\infty,t}(\mu_1,\mu_2)=\inf\left\{||d_t||_{L^\infty(\gamma)}|\gamma\in\Pi(\mu_1,\mu_2)\right\}=\lim_{p\to\infty}W_{p,t}(\mu_1,\mu_2),                                                                                                                                                   
\end{align*}
with essential supremum $||d||_{L^\infty(\gamma)}=\inf\{C\geq0|d(x,y)\leq C\, \gamma\text{-a.e. }x,y\}$. For the second equality see e.g. Lemma 3.2 in \cite{kuwadadual}.

Contraction estimates for the dual heat flow under $W_{2,t}=W_t$ characterize, among others, super-Ricci flows in the sense of \cite{sturm2015}.  This is the main theorem in \cite{sturm2016} and is repeated in the following definition. 

\begin{Def}[Theorem 1.7 in \cite{sturm2016}]\label{supereva}
 We say that $(X,d_t,m_t)_{t\in(0,T)}$ is a \emph{super-Ricci flow} if one of the following equivalent assertions holds
 \begin{enumerate}
  \item[i)] For a.e.\ $t\in (0,T)$ and every  $W_t$-geodesic  $(\mu^a)_{a\in[0,1]}$ in $\mathcal P(X)$ with
$\mu^0,\mu^1\in \Dom(S)$
\begin{equation}\label{est-Ieva}
\partial^+_a S_t(\mu^{a})\big|_{a=1-}-\partial^-_a S_t(\mu^{a})\big|_{a=0+}
\ge- \frac 12\partial_t^- W_{t-}^2(\mu^0,\mu^1)
\end{equation}
(`dynamic convexity').
\item[ii)]
For all $0\le s<t\le T$ and $\mu,\nu\in\mathcal P(X)$
\begin{equation}\label{est-IIeva}
W_s (\hat P_{t,s}\mu,\hat P_{t,s}\nu)\le W_t (\mu,\nu)
\end{equation}
(`transport estimate').
\item[iii)]
For all $u\in\Dom(\E)$ and all $0<s<t< T$
\begin{equation}\label{est-IIIeva}
\big|\nabla_t(P_{t,s}u)\big|_*^2\le P_{t,s}\big(|\nabla_s u|_*^2\big)
\end{equation}
(`gradient estimate').
\item[iv)] 
For all $0<s<t<T$ and for all $u_s,g_t\in\F$ with $g_t\ge0$,  $g_t\in L^\infty$, $u_s\in \Lip(X)$
and for a.e.\ $r\in(s,t)$ 
 \begin{equation}\label{est-IVeva}
 {\bf \Gamma}_{2,r}(u_r)(g_r)
 \geq\frac12\int\stackrel{\bullet}{\Gamma}_r(u_r)g_rdm_r
\end{equation}
(`dynamic Bochner inequality' or `dynamic Bakry-Emery condition')
where $u_r=P_{r,s}u_s$ and $g_r=P^*_{t,r}g_t$.
%\\

 \end{enumerate}
Here 
 $${\bf \Gamma}_{2,r}(u_r)(g_r):= \int\Big[\frac12\Gamma_{r}(u_r)\Delta_r g_r
 +(\Delta_r u_r)^2g_r
 +\Gamma_r(u_r,g_r)\Delta_ru_r\Big]dm_r$$ 
 denotes the distribution valued $\Gamma_2$-operator (at time $r$) applied to $u_r$ and tested against $g_r$
 and
$$\stackrel{\bullet}{\Gamma}_r(u_r):=\mbox{w-}\lim_{\delta\to0}\
\frac1\delta\Big(\Gamma_{r+\delta}(u_r)-\Gamma_r(u_r)\Big)$$ 
denotes any subsequential weak limit  of $\frac1{2\delta}\big(\Gamma_{r+\delta}-\Gamma_{r-\delta}\big)(u_r)$ in $L^2((s,t)\times X)$.

\end{Def}

\section{Brownian motions and transport estimates}\label{sec:brownian}

In the following we introduce backward Brownian motions on time-dependent metric measure spaces $(X,d_t,m_t)_{t\in I}$ from Section \ref{sec:proofofmain}, and prove their existence.

Recall that by Theorem \ref{thm:kernels} $(p_{t,s})_{s<t}$ are Markov kernels which satisfy the propagator property
\begin{align*}
p_{t,r}(x,z)=\int p_{t,s}(x,y)\, p_{s,r}(y,z)\, dm_s(y)\qquad\quad (\forall r<s<t, \forall s,z).
\end{align*}

\begin{Def}\label{defofbm}
Let $\mu\in\mathcal P(X)$ and $t\in I$.
 We call a stochastic process $(B_s)_{s\leq t}$ on a probability space $(\Omega,\Sigma,\mathbb P)$ with values in $X$
a \emph{backward Brownian motion} on $X$ with terminal distribution $\mu$ if it is a sample-continuous Markov process with transition probabilities 
 \begin{align*}
  \mathbb P[B_s\in A|B_{s'}=x]:=\hat P_{s',s}(\delta_x)(A)=\int_Ap_{s',s}(x,y)\, dm_s(y)
 \end{align*}
 for all $s<s'< t$ and every Borel set $A$ such that $\mathbb P\circ B_t^{-1}=\mu$.
\end{Def}
\begin{remark*}
We define backward Brownian motions with terminal data, since we later want to use them to characterize super-Ricci flows (Theorem \ref{thm3}).
 Parametrizing them forward in time, we would have to consider backward super-Ricci flows in order to get the equivalent result.
\end{remark*}
\begin{remark*}
 We consider the time-dependent generator $\Delta_s$ instead of $\frac12\Delta_s$.
 This is only for convenience and the stochastic process with generators $(\frac12\Delta_s)_{s\leq 2t}$ is given by 
 $(\tilde B_{s})_{s\leq 2t}$,
 where $\tilde B_s:=B_{\frac s2}$.
\end{remark*}

In order to prove existence of backward Brownian motions we consider for fixed $t\in I$ the finite subset 
$J=\{t_1,\cdots,t_r\}$ of $(0,t]$, where $0<t_1<t_2<\ldots<t_r<t$ and the
finite dimensional distribution $P_J^\mu$, where $\mu\in \mathcal P(X)$, defined by
\begin{align*}
 &P_J^\mu(A_r\times\ldots\times A_1)\\
 :=&\int_X\int_{A_r}\ldots\int_{A_1} p_{t_2,t_1}(x_{t_2},x_{t_1})\, dm_{t_1}(x_{t_1})\ldots p_{t,t_{r}}(x,x_{t_{r}})\, dm_{t_{r}}(x_{t_r})\, d\mu(x).
\end{align*}
The family of probability measures $\{P_J^\mu|J\text{ finite }\subset (0,t]\}$ defines a projective family, hence the Kolmogorov extension theorem 
\cite[Theorem 35.5]{bauer}
implies that there exists a unique probability measure 
$P_{(0,t]}^\mu$ on $(X^{(0,t]},\mathcal B(X)^{(0,t]})$
such that $(\pi_J)_\#P_{(0,t]}^\mu=P_J^\mu$. Here, $\pi_J$ denotes the projection $\omega\mapsto (\omega(t_1),\ldots,\omega(t_r))$ from $X^{(0,t]}$ to $X^r$.

  For every $s\in (0,t]$ the map $\pi_s\colon \omega\mapsto \omega(s)$ from $X^{(0,t]}$ to $X$ is a stochastic process with
finite-dimensional distributions $(P_J^\mu)_J$. The following Proposition yields existence of a continuous modification
$(B_s)_{s\leq t}$, and hence a backward Brownian motion.
\begin{Prop}\label{prop:exbm}
For each $t\in I$ and each $\mu\in\mathcal P(X)$ there exists a backward Brownian motion on $X$ with terminal distribution $\mu$, which is unique in law.
\end{Prop}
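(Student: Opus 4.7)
The Kolmogorov extension theorem already produces a process $(\pi_s)_{s\in(0,t]}$ on $X^{(0,t]}$ with the prescribed finite-dimensional marginals $P_J^\mu$. What remains is (a) to exhibit a sample-continuous modification, which will be the claimed backward Brownian motion, and (b) to verify uniqueness in law. My plan is to handle (a) by the Kolmogorov--Chentsov continuity criterion, and (b) by observing that the Markov property forces the finite-dimensional distributions.

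For (a), I would aim for a moment estimate of the form
\begin{align*}
\mathbb{E}\bigl[d_{t_0}(\pi_s,\pi_{s'})^4\bigr]\le C\,|s-s'|^2
\end{align*}
for $s<s'$ in a fixed compact subinterval of $I$, where $t_0$ is the reference time from the preliminaries. Conditioning on $\pi_{s'}=x$ reduces this to an estimate
\begin{align*}
\int_X d_{t_0}(x,y)^4\, p_{s',s}(x,y)\, dm_s(y)\le C\,(s'-s)^{2}
\end{align*}
uniform in $x\in X$, which by the log-Lipschitz equivalence of $(d_t)_{t\in I}$ from assumption \eqref{assumption2} is equivalent to the same bound with $d_s$ in place of $d_{t_0}$. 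The natural derivation uses $\Gamma$-calculus: differentiate $r\mapsto\int d_s(x,\cdot)^{2k}\, p_{r,s}(x,\cdot)\,dm_s$ in $r$ and bound the resulting $\Gamma_r$-terms, using that $d_s(x,\cdot)$ is $1$-Lipschitz w.r.t.\ $d_s$ and the log-Lipschitz control relating $\Gamma_r$ and $\Gamma_s$. Iterating for $k=1,2$ yields the desired fourth-moment estimate. Alternatively, Gaussian-type heat-kernel upper bounds follow from the parabolic Moser--Harnack inequality of Theorem \ref{thm:kernels}(iii) combined with the uniform RCD$(K,N)$ slice structure. Once the moment bound is in hand, Kolmogorov--Chentsov produces a modification $(B_s)_{s\leq t}$ of $(\pi_s)_{s\leq t}$ which is, $P_{(0,t]}^\mu$-almost surely, locally H\"older continuous in $s$ with respect to $d_{t_0}$ (hence w.r.t.\ every $d_r$). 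Since passing to a modification preserves the finite-dimensional distributions, $(B_s)_{s\leq t}$ is a sample-continuous Markov process with transition probabilities $\hat P_{s',s}(\delta_\cdot)$ and terminal law $\mu$, i.e.\ a backward Brownian motion in the sense of Definition \ref{defofbm}.

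For (b), let $(B_s)$ and $(\tilde B_s)$ be two backward Brownian motions with terminal distribution $\mu$. The Markov property together with the prescribed transition kernels forces their finite-dimensional distributions to equal $P_J^\mu$ for every finite $J\subset(0,t]$. Since $X$ is Polish and the paths are sample-continuous, the law on $\mathcal{C}((0,t]\to X)$ is determined by its restrictions to any countable dense set of times, so the two laws coincide.

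The main obstacle is the fourth-moment bound in (a). Such estimates are classical in the static RCD$(K,N)$ setting, but in the time-dependent case one must keep careful track of the distinct Laplacians, carr\'e du champs and reference measures at different times. The $\Gamma$-calculus sketch above, which uses only assumptions \eqref{assumption1}--\eqref{assumption2} together with Lemma \ref{P*Eva} and Theorem \ref{energy-estEva}, seems the most economical route and avoids invoking full Gaussian upper bounds.
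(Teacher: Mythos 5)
Your high-level structure is exactly the paper's: obtain a fourth-moment estimate for the transition kernel, apply the Kolmogorov--Chentsov criterion to get a sample-continuous modification, and deduce uniqueness in law from the fact that the finite-dimensional distributions are prescribed. Your uniqueness argument (b) is fine. The gap lies in your preferred derivation of the moment bound.

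The paper's moment estimate (Lemma \ref{continuitylp}) is obtained from Gaussian upper bounds for the heat kernel $p_{s',s}$ (quoted from \cite{lierl2015}) together with Bishop--Gromov volume comparison, which is exactly where the finite upper dimension bound $N$ is used. Your ``most economical'' route via $\Gamma$-calculus is not carried out and, as sketched, would fail for two reasons. First, if you set $\phi(r)=\int d_s(x,\cdot)^{2k}p_{r,s}(x,\cdot)\,dm_s=P_{r,s}\big(d_s(x,\cdot)^{2k}\big)(x)$, then $\phi'(r)=\Delta_r\big(P_{r,s}d_s(x,\cdot)^{2k}\big)(x)$, and in the time-dependent setting $\Delta_r$ and $P_{r,s}$ do \emph{not} commute (the paper remarks on this explicitly), so this cannot be rewritten as $P_{r,s}\big(\Delta_s d_s(x,\cdot)^{2k}\big)(x)$ and then bounded inside the semigroup; you would need a Duhamel-type correction involving $\partial_q\Delta_q$, which is not controlled by \eqref{assumption1}--\eqref{assumption2} alone. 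Second, the quantity you ultimately need to dominate is a Laplacian of a power of the distance, not merely its carr\'e du champ. The $1$-Lipschitz bound gives $\Gamma_r(d_s(x,\cdot))\le e^{2L|r-s|}$ but says nothing about $\Delta_r\big(d_s(x,\cdot)^2\big)$; controlling the latter requires Laplacian comparison, hence the dimension bound $N<\infty$. So your claim that the estimate ``uses only assumptions \eqref{assumption1}--\eqref{assumption2} together with Lemma \ref{P*Eva} and Theorem \ref{energy-estEva}'' and ``avoids invoking full Gaussian upper bounds'' is too optimistic. Your fallback route---Gaussian upper bounds plus the uniform RCD$(K,N)$ slice structure (Bishop--Gromov)---is precisely what the paper does in Lemma \ref{continuitylp}, and that is the step that actually closes the proof.
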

\begin{proof}
 We need to show that there exists positive constants $\alpha, \beta,c>0$ such that the above mentioned process $\pi_s$ satisfies
 \begin{align}\label{eq:continuitylp}
 E[d(\pi_{s'},\pi_s)^\alpha]\leq c|s-s'|^{1+\beta}
 \end{align}
for all $s',s\in(0,t]$. Then the Kolmogorov continuity theorem \cite[Theorem 39.3]{bauer} implies that there exists a modification $(B_s)_{s\leq t}$ such that
the map $s\mapsto B_s(\omega)$ is continuous for $P_{(0,t]}^\mu$-a.e $\omega$. Hence the process $(B_s)_{s\leq t}$ on the probability space
$(X^{(0,t]},\mathcal B(X)^{(0,t]},P_{(0,t]}^\mu)$ yields the desired properties. For $\alpha>2$ \eqref{eq:continuitylp}
follows from \eqref{continuitylpproof} in the proof of Lemma \ref{continuitylp} below.

Since all finite-dimensional distributions are uniquely determined this process is unique in law.
\end{proof}

\begin{lma}\label{continuitylp}
Let $\mu_s=\hat P_{t,s}\mu$. Then
there exist constants $c,c'>0$ depending only on $K,N$ and $L$ such that
\begin{align*}
W_{p,t}(\mu_s,\mu_{s'})^p\leq c|s-s'|^{p/2}e^{c'|s-s'|/2}.
\end{align*}
for all $0\leq s,s'\leq t$.
\end{lma}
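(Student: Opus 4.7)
The strategy is to build the natural Markov coupling of $\mu_s$ and $\mu_{s'}$ via the heat kernel and to reduce the claim to a pointwise moment estimate for the time-dependent heat kernel.

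Assume without loss of generality $s<s'\le t$. By the propagator property (Theorem \ref{thm:kernels}(iv)), $\mu_s=\hat P_{s',s}\mu_{s'}$, so
$$\pi(dy\,dy'):=p_{s',s}(y',y)\,m_s(dy)\,\mu_{s'}(dy')$$
defines a coupling of $\mu_s$ and $\mu_{s'}$. Combining this with the log-Lipschitz bound \eqref{assumption2}, which implies $d_t\le e^{L(t-s)}d_s$ pointwise, yields
$$W_{p,t}(\mu_s,\mu_{s'})^p \;\le\; e^{pLT}\int_X d\mu_{s'}(y')\int_X d_s(y,y')^p\,p_{s',s}(y',y)\,dm_s(y).$$
Absorbing the constant $e^{pLT}$ into the prefactor, it remains to establish the pointwise heat-kernel moment bound
$$M(y')\;:=\;\int_X d_s(y,y')^p\,p_{s',s}(y',y)\,dm_s(y)\;\le\;c\,(s'-s)^{p/2}e^{c'(s'-s)/2}$$
uniformly in $y'\in X$, since integration against $\mu_{s'}$ preserves the bound.

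For the moment estimate I would appeal to Gaussian-type upper bounds on the time-dependent heat kernel available under the assumptions of Section \ref{sec:proofofmain}. Concretely, one expects a bound of the form
$$p_{s',s}(y',y)\;\le\;C(s'-s)^{-N/2}\exp\bigl(-d_s(y',y)^2/(5(s'-s))+C(s'-s)\bigr),$$
the time-dependent analogue of Sturm's Gaussian heat-kernel estimate for static RCD$(K,N)$ spaces, obtained by propagating the static bound through the evolution using the uniform equivalence of $(d_r,m_r)$ for $r\in[s,s']$ granted by \eqref{assumption1}-\eqref{assumption2}. Integrating $d_s(\cdot,y')^p$ against such a sub-Gaussian density produces exactly the required $(s'-s)^{p/2}$ scaling, with the Gronwall-type factor $e^{c'(s'-s)/2}$ accounting for the tail correction.

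The hard part will be the Gaussian kernel estimate in the time-dependent setting: the static RCD bound cannot be quoted verbatim, and one must track the distortion of $(d_r,m_r)$ over $r\in[s,s']$. As an alternative route avoiding direct kernel bounds, the case $p=2$ can be handled via the Fisher-information identity for the absolutely continuous curve $r\mapsto\hat P_{r,s}\mu_{s'}$ in $(\mathcal P(X),W_{2,s})$ together with Cauchy-Schwarz on its metric speed, and general $p\ge 2$ can then be bootstrapped from the $p=2$ case by exploiting sub-Gaussian tails of $p_{s',s}$ in the spirit of Varadhan's short-time estimates.
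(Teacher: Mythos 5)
Your overall strategy coincides with the paper's: couple $\mu_s=\hat P_{s',s}\mu_{s'}$ and $\mu_{s'}$ through the kernel $p_{s',s}$ and reduce to a uniform moment bound $\int d^p\,p_{s',s}(x,y)\,dm_s(y)\lesssim \sigma^{p/2}$, $\sigma=s'-s$. However, the way you propose to close the moment bound has a genuine gap. The on-diagonal estimate $p_{s',s}(x,y)\le C\sigma^{-N/2}\exp(-d^2/(C\sigma))$ that you posit is not available under the standing assumptions: it presupposes a uniform non-collapsing bound $m_t(B_t(\sqrt\sigma,x))\ge c\,\sigma^{N/2}$, which does not follow from RCD$(K,N)$ with a finite reference measure on a possibly unbounded space. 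The Gaussian upper bound that is actually available (and which the paper simply quotes from Lierl--Saloff-Coste, Section 4 of \cite{lierl2015}, for time-dependent forms -- so the ``hard part'' you anticipate does not require a new derivation) has the ball volume in the denominator, $p_{s',s}(x,y)\le C\,m_t(B_t(\sqrt\sigma,x))^{-1}\exp(-d_t^2(x,y)/(C\sigma))$. Moreover, even granting your form of the bound, the sentence ``integrating $d^p$ against such a sub-Gaussian density produces exactly the required $(s'-s)^{p/2}$ scaling'' hides the second half of the argument: without an upper volume-growth control of the type $m(B(R,x))\lesssim R^N e^{cR}$ at \emph{all} scales $R\ge\sqrt\sigma$ (again not a consequence of finiteness of $m$ alone), the annuli at scale comparable to $\sqrt\sigma$ can contribute up to order $\sigma^{(p-N)/2}$, which diverges for $p<N$.

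The paper resolves exactly this point with Bishop--Gromov: writing $A(R,x)=\partial_{R+}m_t(B_t(R,x))$, the comparison gives $A(R,x)\le N R^{N-1}\sigma^{-N/2}e^{R\sqrt{|K|(N-1)}}\,m_t(B_t(\sqrt\sigma,x))$ for $R\ge\sqrt\sigma$, so that the unknown small-ball volume cancels against the $m_t(B_t(\sqrt\sigma,x))^{-1}$ in the kernel bound; the near-diagonal part $\{d_t\le\sqrt\sigma\}$ is handled simply by the Markov property ($\int p_{s',s}\,dm_s=1$), contributing $C\sigma^{p/2}$, and the radial Gaussian integral over $R\ge\sqrt\sigma$ yields the factor $c\,\sigma^{p/2}e^{c'\sigma/2}$. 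To repair your write-up you should therefore (i) replace the $\sigma^{-N/2}$ prefactor by the ball-volume form of the Gaussian bound, and (ii) insert the Bishop--Gromov surface-area comparison and the split of the integral at $R=\sqrt\sigma$; the alternative $p=2$ route via Fisher information and a bootstrap to $p>2$ is not needed once this is done, and as sketched it would anyway again rely on the same kernel bounds.
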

\begin{proof}
Assume $0<s<s'<t$. Then by $\mu_s=\hat P_{s',s}\mu_{s'}$ we estimate
\begin{align*}
W_{p,t}(\mu_s,\mu_{s'})^p\leq \int\int d_t^p(x,y)p_{s',s}(x,y)\, dm_s(y)\, d\mu_{s'}(x).
\end{align*}
By virtue of the Gaussian upper bounds (\cite[Section 4]{lierl2015}) and the Bishop Gromov volume comparison in RCD$(K,N)$ spaces
(\cite[Theorem 2.3]{sturm2006}) we obtain for $\sigma=s'-s$
and $B_t(r,x)$ denoting the ball of radius $r$ around $x$ in the metric space $(X,d_t)$
\begin{align*}
p_{s',s}(x,y)&\leq \frac C{m_t(B_t(\sqrt\sigma, x))}\cdot \exp\Big(-\frac{d^2_t(x,y)}{C\sigma}\Big)\\
A(R,x)&\leq \Big(\frac R{r}\Big)^{N-1}\cdot e^{R\sqrt{|K|(N-1)}}\cdot A(r,x)
\end{align*}
for $R\ge r$
where $A(r,x)=\partial_{r+} m_t(B_t(r,x))$ and thus (by integrating from 0 to $\sqrt\sigma$)
\begin{eqnarray*}
A(R,x)\le N\frac {R^{N-1}}{{\sigma}^{N/2}}\cdot e^{R\sqrt{|K|(N-1)}}\cdot m_t(B_t( \sqrt\sigma,x))
\end{eqnarray*}
for $R\ge \sqrt\sigma$. Then estimating further yields (with varying constants)
\begin{equation}
\begin{aligned}\label{continuitylpproof}
& \int\int d_t^p(x,y)p_{s',s}(x,y)\, dm_s(y)\, d\mu_{s'}(x)\\
 &\leq\int_X\Big[\frac C{m_t(B_t(\sqrt\sigma, x))}\cdot \int_X d^p_t(x,y)\cdot \exp\Big(-\frac{d^2_t(x,y)}{C\sigma}\Big)dm_t(y)
\Big]d\mu_{s'}(x)\\
&\leq C\sigma^{p/2}+
C\int_X \int_{\sqrt{\sigma}}^\infty R^p\cdot 
\exp\Big(-\frac{R^2}{C\sigma}\Big)
 N\frac {R^{N-1}}{{\sigma}^{N/2}}\cdot e^{R\sqrt{|K|(N-1)}}\,dR\,d\mu_{s'}(x)\\
 &\leq C\sigma^{p/2}+c''\sigma^{p/2}e^{c'\sigma/2}\leq c\sigma^{p/2}e^{c'\sigma/2}.
\end{aligned}
\end{equation}
\end{proof}

\subsubsection*{Couplings of Brownian motions}
In the remainder of this section we will study couplings of backward Brownian motions under the additional assumption that the following transport estimate holds: For every $\mu,\nu\in\mathcal P(X)$ and every $p\in[1,\infty]$ we have 
\begin{align}\label{eq:kuwa2}
W_{p,s}(\hat P_{t,s}\mu,\hat P_{t,s}\nu)\leq W_{p,t}(\mu,\nu).
\end{align}
In Section \ref{sec:transport} we show that, under additional regularity assumptions, $(X,d_t,m_t)_{t\in I}$ is a super-Ricci flow if and only if \eqref{eq:kuwa2} holds.

 Using \eqref{eq:kuwa2} we construct couplings of two backward Brownian motions $(B_s^1)_{s\leq t},\\
(B_s^2)_{s\leq t}$
on $X$ such that the 
distance $d_s$ between $B_s^1$ and $B_s^2$ does not exceed the distance $d_t$ between $B_t^1$ and $B_t^2$. 

We adapt the strategy in \cite{sturmcoup} and introduce the $\sigma$-field 
$$\mathcal B^u(X^2):=\bigcap_{\nu\in\mathcal P(X^2)}\mathcal B^\nu(X^2)$$
 of universally measurable subsets of $X^2$, i.e. the intersection
of all $\mathcal B^\nu(X^2)$, where $\nu$ runs through the set $\mathcal P(X^2)$ and where $\mathcal B^\nu(X^2)$ denotes the completion of the Borel $\sigma$-field on $X^2$ w.r.t.
$\nu\in\mathcal P(X^2)$. Let $\mathrm D:=\{k2^{-n}|k,n\in\mathbb N\}\cap (0,t]$ denote the set of nonnegative dyadic number s in $(0,t]$ and $\mathrm D_n:=
\{k2^{-n}|k\in\mathbb N\}\cap (0,t]$ for fixed $n\in\mathbb N$.

\begin{lma}
Assume that \eqref{eq:kuwa2} holds. Then
for each $s\leq t$
there exists a Markov kernel $q_{t,s}^*$ on $(X^2,\mathcal B^u(X^2))$ with the following properties:
\begin{enumerate}
\item[i)]
For each $(x,y)\in X^2$ the probability measure $q_{t,s}^*((x,y),\cdot)$ is a coupling of the probability measures $p_{t,s}(x,\cdot)$ and $p_{t,s}(y,\cdot)$.
\item[ii)]
For each $(x,y)\in X^2$ and $q_{t,s}^*((x,y),\cdot)$-a.e. $(x',y')\in X^2$
\begin{align*}
d_s(x',y')\leq d_t(x,y).
 \end{align*} 
 \end{enumerate}
\end{lma}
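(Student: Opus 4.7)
The plan is to apply the hypothesis \eqref{eq:kuwa2} with $p=\infty$ to Dirac measures and then invoke a measurable selection theorem to pick a coupling universally measurably in $(x,y)$.

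For each $(x,y)\in X^2$, applying \eqref{eq:kuwa2} to $\mu=\delta_x$, $\nu=\delta_y$ yields $W_{\infty,s}(\hat P_{t,s}\delta_x,\hat P_{t,s}\delta_y)\le W_{\infty,t}(\delta_x,\delta_y)=d_t(x,y)$, where $\hat P_{t,s}\delta_x=p_{t,s}(x,\cdot)\,m_s$. By the definition of $W_\infty$, for each $\varepsilon>0$ there is a coupling $\gamma_\varepsilon$ of these two measures with $d_s\le d_t(x,y)+\varepsilon$ $\gamma_\varepsilon$-almost everywhere. The couplings with these fixed marginals are tight and hence narrowly compact, and $\{d_s>d_t(x,y)+\varepsilon\}$ is open in $X^2$, so by Portmanteau any narrow limit point $\gamma$ satisfies $d_s\le d_t(x,y)$ $\gamma$-a.e. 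Thus
\[
\Gamma(x,y):=\Big\{\gamma\in\Pi(\hat P_{t,s}\delta_x,\hat P_{t,s}\delta_y):\gamma(\{d_s>d_t(x,y)\})=0\Big\}
\]
is nonempty for every $(x,y)\in X^2$.

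The next step is to verify that the graph $\{((x,y),\gamma):\gamma\in\Gamma(x,y)\}\subset X^2\times\mathcal P(X^2)$ is Borel. For the marginal constraints, $\gamma\mapsto\int\phi\,d((e_1)_\#\gamma)$ is narrowly continuous for $\phi\in C_b(X)$, and $x\mapsto P_{t,s}\phi(x)=\int\phi(z)\,p_{t,s}(x,z)\,dm_s(z)$ is continuous by the Hölder regularity of the heat kernel (Theorem \ref{thm:kernels}(iv)); testing against a countable dense family in $C_b(X)$ gives a Borel condition, and similarly for the second marginal. For the essential bound, approximate $\mathbf{1}_{\{d_s>d_t(x,y)\}}$ from below by the jointly continuous and bounded functions $h_n(x,y,x',y'):=\min\bigl(1,n(d_s(x',y')-d_t(x,y))_+\bigr)$; the map $(x,y,\gamma)\mapsto\int h_n\,d\gamma$ is continuous in the product topology, and $\gamma(\{d_s>d_t(x,y)\})=\sup_n\int h_n\,d\gamma$ is lower semicontinuous in $(x,y,\gamma)$, so the constraint $\gamma(\{d_s>d_t(x,y)\})=0$ is Borel.

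Finally, the Jankov--von Neumann selection theorem applied to the Borel-graph, nonempty-valued correspondence $\Gamma$ produces a universally measurable selector $q^*:X^2\to\mathcal P(X^2)$. Setting $q_{t,s}^*((x,y),A):=q^*(x,y)(A)$ yields properties i) and ii) by construction; the upgrade to a Markov kernel on $(X^2,\mathcal B^u(X^2))$ follows from the standard extension argument, approximating $A\in\mathcal B^u(X^2)$ by Borel kernels and hulls modulo a $q^*(x,y)$-null set. The main obstacle is the joint-measurability bookkeeping, especially that the $W_\infty$-threshold $d_t(x,y)$ itself varies with $(x,y)$; once the graph of $\Gamma$ is identified as Borel the selection theorem closes the argument, following the strategy of \cite{sturmcoup} adapted to the time-dependent setting.
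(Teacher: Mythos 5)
Your proof is correct and follows essentially the same strategy as the paper's: for each fixed $(x,y)$ one produces a coupling of $\hat P_{t,s}\delta_x$ and $\hat P_{t,s}\delta_y$ concentrated on $\{d_s\le d_t(x,y)\}$ by a compactness/semicontinuity argument over $\Pi(\hat P_{t,s}\delta_x,\hat P_{t,s}\delta_y)$, and then applies a measurable selection theorem to obtain the kernel $q_{t,s}^*$. The only differences are cosmetic: you invoke the hypothesis \eqref{eq:kuwa2} directly at $p=\infty$ with $\varepsilon$-optimal plans and Portmanteau, where the paper takes $W_{p,s}$-optimal plans for finite $p$, truncates $d_s\wedge R$ and lets $p\to\infty$ along a weak limit, and you verify Borel-measurability of the graph to use Jankov--von Neumann, where the paper notes closedness of the admissible set of couplings and cites the selection theorem from Bogachev.
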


\begin{proof}
By virtue of the transport estimate \eqref{eq:kuwa2} there exists at least one probability measures with properties \textit{i)} and \textit{ii)}.
Indeed, define $\mu_s=\hat P_{t,s}\delta_x$, $\nu_s=\hat P_{t,s}\delta_y$ and let 
$\gamma_p\in\Pi(\mu_s,\nu_s)$ such that $W_{p,s}(\mu_s,\nu_s)=||d_s||_{L^p(\gamma_p)}$. Since $\gamma_p\in\Pi(\mu_s,\nu_s)$, $(\gamma_p)_{p\in\mathbb N}$ is 
tight
(\cite[Lemma 4.4]{villani2009}) and hence there exists a subsequence $p_k$ and a probability measure $\gamma$ such that $\gamma_{p_k}$
weakly converges to $\gamma$. Since $\Pi(\mu_s,\nu_s)$ is closed we obtain that $\gamma\in\Pi(\mu_s,\nu_s)$. Moreover, since $d_s\wedge R\in\mathcal C_b(X\times X)$
\begin{align*}
 ||d_s\wedge R||_{L^p(\gamma)}=\lim_{k\to\infty}||d_s\wedge R||_{L^p(\gamma_{p_k})}\leq\lim_{k\to\infty}||d_s||_{L^{p_k}(\gamma_{p_k})}\leq d_t(x,y),
\end{align*}
where the second inequality follows from the H\"older inequality and the last from Corollary 2.15. Letting $R\to\infty$ and $p\to\infty$, we obtain
\begin{align*}
 ||d_s||_{L^\infty(\gamma)}\leq d_t(x,y).
\end{align*}
Hence the set of all these  couplings $\gamma$ is non-empty and satisfies \textit{i)} and \textit{ii)}.
Moreover, for given $x,y\in X$ this set is closed w.r.t. weak convergence in $\mathcal P(X^2)$.
According to the measurable selection theorem \cite[Theorem 6.9.2]{bogachev2007} we may choose a coupling $q_{t,s}^*((x,y),\cdot)$ such that the map
\begin{align*}
(x,y)\mapsto q_{t,s}^*((x,y),\cdot),\qquad (X^2,\mathcal B^u(X^2))\to (\mathcal P(X^2),\mathcal B(\mathcal P(X^2)))
\end{align*}
is measurable.
\end{proof}
\begin{lma}\label{cklemma}
Assuming \eqref{eq:kuwa2},
for each $n\in\mathbb N$ and $s,s'\in\mathrm D_n$ there exists a Markov kernel $q_{s,s'}^{(n)}$ on $(X^2,\mathcal B^u(X^2))$ with the following properties:
\begin{enumerate}
\item[i)]
For each $(x,y)\in X^2$ the probability measure $q_{s,s'}^{(n)}((x,y),\cdot)$ is a coupling of $p_{s,s'}(x,\cdot)$ and $p_{s,s'}(y,\cdot)$.
\item[ii)]
For each  $(x,y)\in X^2$
\begin{align*}
d_{s'}(x',y')\leq d_s(x,y)
 \end{align*}
for $q_{s,s'}^{(n)}((x,y),\cdot)$-a.e. $(x',y')$.
\end{enumerate}
\end{lma}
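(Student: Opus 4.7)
The plan is to obtain $q_{s,s'}^{(n)}$ by iterating the one-step coupling kernels $q_{\cdot,\cdot}^{\ast}$ produced by the preceding lemma along the dyadic grid of step $2^{-n}$. For $s=s'$ take the diagonal kernel $q_{s,s}^{(n)}((x,y),\cdot):=\delta_{(x,y)}$. Assuming without loss of generality $s>s'$, write $N:=(s-s')\cdot 2^{n}\in\mathbb N$ and $s_j:=s-j\cdot 2^{-n}$ for $j=0,\ldots,N$, so that $s_{0}=s$ and $s_{N}=s'$ and each step $s_j\to s_{j+1}$ has length $2^{-n}$. Define $q_{s,s'}^{(n)}$ inductively by
\begin{align*}
q_{s,s'}^{(n)}((x,y),A):=\int_{X^{2}}q_{s_{1},s'}^{(n)}\bigl((x_{1},y_{1}),A\bigr)\,q_{s,s_{1}}^{\ast}\bigl((x,y),d(x_{1},y_{1})\bigr).
\end{align*}
Since each one-step factor is a universally measurable Markov kernel on $(X^{2},\mathcal B^{u}(X^{2}))$ by the preceding lemma, and composition preserves universal measurability of Markov kernels, the result is again such a kernel.

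I would verify property i) by induction on $N$. The base case $N=1$ is exactly property i) of the preceding lemma. For the inductive step, the $x$-marginal of $q_{s,s'}^{(n)}((x,y),\cdot)$ collapses, using the inductive hypothesis on $q_{s_{1},s'}^{(n)}$, to
\begin{align*}
\int_{X}p_{s_{1},s'}(x_{1},\cdot)\,p_{s,s_{1}}(x,dx_{1})=p_{s,s'}(x,\cdot)
\end{align*}
by the propagator identity recorded in Theorem \ref{thm:kernels}(iv), and analogously for the $y$-marginal.

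Property ii) follows by the same induction: the one-step contraction property ii) of $q_{s,s_{1}}^{\ast}$ yields $d_{s_{1}}(x_{1},y_{1})\leq d_{s}(x,y)$ for $q_{s,s_{1}}^{\ast}((x,y),\cdot)$-almost every $(x_{1},y_{1})$, and the inductive hypothesis applied to $q_{s_{1},s'}^{(n)}$ gives $d_{s'}(x',y')\leq d_{s_{1}}(x_{1},y_{1})$ almost surely; chaining the two inequalities produces $d_{s'}(x',y')\leq d_{s}(x,y)$ for $q_{s,s'}^{(n)}((x,y),\cdot)$-almost every $(x',y')$. The only delicate bookkeeping is the stability of universal measurability under composition, but this is standard once the preceding lemma has secured it at the one-step level via the measurable selection theorem cited there; I therefore anticipate no genuine obstacle beyond routine verification.
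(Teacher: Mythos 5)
Your proposal is correct and follows essentially the same route as the paper: the paper also defines $q_{s,s'}^{(n)}$ as the composition $q^*_{(k+1)2^{-n},s'}\circ\ldots\circ q^*_{s,(l-1)2^{-n}}$ of the one-step kernels along the dyadic grid and obtains i) and ii) by iteration (citing Lemma 2.3 in \cite{sturmcoup}), which is exactly the induction you spell out via the propagator property and the chained contraction inequalities.
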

\begin{proof}
For $s=l2^{-n}$ and $s'=k2^{-n}$ with $l\geq k$ we put
\begin{align*}
q_{s,s'}^{(n)}:=q^*_{(k+1)2^{-n},s'}\circ\ldots\circ q^*_{s,(l-1)2^{-n}}.
\end{align*}
Obviously we have for $r\in \mathrm D_n$ such that $s'\leq r\leq s$, 
\begin{align}\label{ck}
q_{r,s'}^{(n)}\circ q_{s,r}^{(n)}=q_{s,s'}^{(n)}
\end{align}
 and the properties \textit{i)} and \textit{ii)} hold by iteration, cf. Lemma 2.3 in \cite{sturmcoup}.
\end{proof}

We fix a distribution $\nu\in\mathcal P(X^2)$ with marginals $\nu_1$ and $\nu_2$. Similarly as before for any finite subset $J=\{t_1,\ldots,t_r\}$ of 
$\mathrm D_n$
we consider the finite-dimensional distribution $Q_J^{(n)}$ on $(X^2)^{|J|}$ 
\begin{align*}
&Q_J^{(n)}(A_r\times\ldots\times A_1)\\
=&\int_{X^2}\int_{A_r}\ldots\int_{A_1}q^{(n)}_{t_2,t_1}((x_{2},y_{2}),d(x_1,y_1))\ldots q_{t,t_r}^{(n)}((x,y),d(x_r,y_r))\nu(d(x,y)),
\end{align*}
where $q_{t,t_r}^*=q_{l2^{-n},t_r}^{(n)}\circ q^*_{t,l2^{-n}}$ whenever $l2^{-n}<t<(l+1)2^{-n}$.
\begin{lma}\label{tightness}
For fixed finite $J\subset \mathrm D_m$ the family $\{Q_J^{(n)}|n\in\mathbb R,n\geq m\}$ is a tight family of probability measures on $(X^2)^{|J|}$.
\end{lma}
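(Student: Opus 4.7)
The plan is to reduce tightness of $\{Q_J^{(n)}\}_{n\geq m}$ on the Polish space $(X^2)^{|J|}$ to tightness of its one-dimensional marginals, which in this setting do not depend on $n$ and are therefore automatically tight.

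First I would observe that by property $i)$ of Lemma~\ref{cklemma}, and the analogous property of the kernel $q^*_{s,s'}$ built immediately before it, every kernel appearing in the definition of $Q_J^{(n)}$ is a coupling of the corresponding pair of heat kernels $p_{s,s'}(x,\cdot)$ and $p_{s,s'}(y,\cdot)$. Consequently, marginalising $Q_J^{(n)}$ onto the $|J|$-tuple of first coordinates $(x_{t_1},\ldots,x_{t_r})$ collapses each factor $q^{(n)}_{s,s'}((x,y),\cdot)$ to $p_{s,s'}(x,\cdot)$, and the resulting push-forward coincides with the finite-dimensional distribution $P_J^{\nu_1}$ introduced in the proof of Proposition~\ref{prop:exbm}. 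By symmetry, the projection onto the second coordinates equals $P_J^{\nu_2}$. Crucially, both marginals are independent of $n$.

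Since $X$ is Polish, so are $X^{|J|}$ and $(X^2)^{|J|}$, and any single Borel probability measure on a Polish space is tight by Ulam's theorem. Given $\varepsilon > 0$, I would pick compacts $K_1,K_2\subset X^{|J|}$ with $P_J^{\nu_i}(K_i)\geq 1-\varepsilon/2$ for $i=1,2$. Under the canonical identification $(X^2)^{|J|}\cong X^{|J|}\times X^{|J|}$ the set $K:=K_1\times K_2$ is compact, and a union bound yields
\[
Q_J^{(n)}\bigl((X^2)^{|J|}\setminus K\bigr)\leq P_J^{\nu_1}(X^{|J|}\setminus K_1)+P_J^{\nu_2}(X^{|J|}\setminus K_2)\leq \varepsilon
\]
uniformly in $n\geq m$, which is the desired tightness.

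No substantial obstacle arises here; the only point requiring a brief check is that, when $t\notin \mathrm{D}_n$, the definition of $Q_J^{(n)}$ uses the hybrid top block $q^*_{t,l2^{-n}}$ in place of an iterated dyadic coupling, but this kernel is again a coupling of the two heat kernels, so the marginal identification is undisturbed. It is worth emphasising that the real content of the coupling construction, namely the $d_s$-contraction in property $ii)$ of Lemma~\ref{cklemma}, plays no role in the tightness argument itself; it enters only afterwards, when one extracts a weak limit along the tight subsequence provided by Prokhorov's theorem and wishes to retain the pathwise distance estimate.
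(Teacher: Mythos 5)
Your argument is correct, and it reaches the conclusion by a slightly different decomposition than the paper. You project $Q_J^{(n)}$ onto the two particle paths, use the identities $(\vec{e_1})_\#Q_J^{(n)}=P_J^{\nu_1}$ and $(\vec{e_2})_\#Q_J^{(n)}=P_J^{\nu_2}$ (which the paper itself only records later, in the proof of Proposition \ref{weak}), note these are two fixed measures on the Polish space $X^{|J|}$, hence tight by Ulam's theorem, and finish with a two-term union bound over the compact $K_1\times K_2$. The paper instead projects onto each time slice $t_i$: using the composition relation \eqref{ck} it identifies the marginal at time $t_i$ as $Q^{(n)}_{t,t_i}$ integrated against $\nu$, whose two space-marginals are the $n$-independent heat-flow measures $\hat P_{t,t_i}\nu_1$ and $\hat P_{t,t_i}\nu_2$; it then chooses compacts $B_1,B_2\subset X$ for these finitely many measures, takes $\vec B=(B_1\times B_2)^r$, and sums over the $r$ times to get the bound $2r\varepsilon$. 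Both routes rest on the same key input, namely property \textit{i)} of Lemma \ref{cklemma} (the kernels are couplings of heat kernels), correctly including the hybrid top block $q^*_{t,l2^{-n}}$ when $t\notin\mathrm D_n$, which you checked. Your version dispenses with \eqref{ck} and the factor $r$ at the cost of invoking tightness on $X^{|J|}$ rather than on $X$, an immaterial difference on a Polish space; your closing remark that the contraction property \textit{ii)} is irrelevant for tightness is also accurate.
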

\begin{proof}
Let $J=\{t_1,\ldots,t_r\}$ with each $t_i\in \mathrm D_m$. The families $\{\hat P_{t,t_{i}}(\nu_1)|i=1,\ldots,r\}$ and $\{\hat P_{t,t_{i}}(\nu_2)|i=1,\ldots,r\}$ are tight by virtue of Prokhorov's theorem, see e.g.
\cite{bogachev2007}.
 This means that given $\varepsilon>0$ there exist compact sets $B_1,B_2\subset X$ such that for all $i=1,\ldots ,r$
\begin{align*}
\hat P_{t,t_{i}}(\nu_1)(X\setminus B_1)<\varepsilon,\quad \hat P_{t,t_{i}}(\nu_2)(X\setminus B_2)<\varepsilon.
\end{align*}
Applying $A_1\times A_2\subset X\times A_2\cup A_1\times X$ and \eqref{ck} yields for the compact set  $\vec{B}=(B_1\times B_2)^r$ and $n\in\mathbb N$
\begin{align*}
Q_J^{(n)}((X^2)^r\setminus \vec B)\leq &\sum_{i=1}^{r} Q_{t,t_{i}}^{(n)}(X^2\setminus B_1\times B_2)\\
\leq & \sum_{i=1}^{r} \Big[Q_{t,t_{i}}^{(n)}((X\setminus B_1)\times X)+Q_{t,t_{i}}^{(n)}((X\times(X\setminus B_2))\Big]\\
=&\sum_{i=1}^{r}\Big [\hat P_{t,t_{i}}(\nu_1)(X\setminus B_1)+\hat P_{t,t_{i}}(\nu_2)(X\setminus B_2)\Big]\\
\leq &2r\varepsilon,
\end{align*}
where the last two inequalities follow from \textit{i)} of Lemma \ref{cklemma} and the tightness of 
$\{\hat P_{t,t_{i}}(\nu_j)\}_i$ respectively. Hence the family $\{Q_J^{(n)}|n\in\mathbb R,n\geq m\}$ is tight.
\end{proof}

For $J=\{t_1,\ldots,t_r\}$ as above we set
\begin{align*}
\vec{e_1}\colon (X^2)^r\to X^r,\quad ((x_1,y_1),\ldots,(x_r,y_r))\mapsto (x_1,\ldots, x_r),
\end{align*}
and similarly for $\vec{e_2}$.
\begin{Prop}\label{weak}
There exists a projective family $\{Q_J^\nu| J\text{ finite }\subset\mathrm D\}$ of probability measures and a subsequence $(n_l)_{l\in\mathbb N}$ such that for each finite $J\subset\mathrm D$
\begin{enumerate}
\item[i)]
 $Q_J^{(n_l)}\to Q_J^\nu$ weakly in $\mathcal P((X^2)^{|J|})$ as $l\to\infty$,
\item[ii)] and  $(\vec{e_1})_\#Q_J^\nu=P_J^{\nu_1}$, $(\vec{e_2})_\#Q_J^\nu=P_J^{\nu_2}$.
\end{enumerate}
In particular there exists a probability measure $Q_\mathrm{D}^\nu\in \mathcal P((X^2)^\mathrm D)$ such that for all finite $J\subset\mathrm D$
\begin{align*}
(\pi_J)_\#Q_\mathrm D^\nu = Q_J^\nu
 \end{align*} 
 and 
 \begin{align*}
 (\vec{e_1})_\#Q_\mathrm D^\nu=P_\mathrm D^{\nu_1},\quad (\vec{e_2})_\#Q_\mathrm D^\nu=P_\mathrm D^{\nu_2}.
 \end{align*}
\end{Prop}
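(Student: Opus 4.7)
The plan is to obtain the projective family $\{Q_J^\nu\}$ by a weak-limit/diagonal procedure on finite subsets of $\mathrm D$, check consistency and marginals by passing to limits through continuous push-forwards, and finally invoke Kolmogorov's extension theorem. First I enumerate the countable set $\mathrm D=\{r_1,r_2,\ldots\}$ and set $J_k:=\{r_1,\ldots,r_k\}$, with $J_k\subset \mathrm D_{m_k}$ for some $m_k$. For each fixed $k$, Lemma \ref{tightness} gives tightness of $\{Q_{J_k}^{(n)}:n\geq m_k\}$, and Prokhorov's theorem provides a weakly convergent subsequence. A standard diagonal extraction then produces a single subsequence $(n_l)_{l\in\mathbb N}$ along which $Q_{J_k}^{(n_l)}$ converges weakly to a limit $Q_{J_k}^\nu\in\mathcal P((X^2)^k)$ simultaneously for every $k$. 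For an arbitrary finite $J\subset\mathrm D$, choose $k$ with $J\subset J_k$ and define $Q_J^\nu$ as the push-forward of $Q_{J_k}^\nu$ under the coordinate projection from $(X^2)^k$ to $(X^2)^{|J|}$; the same push-forward applied to $Q_{J_k}^{(n_l)}$ equals $Q_J^{(n_l)}$ by construction (using \eqref{ck}), so by continuity of this projection the convergence $Q_J^{(n_l)}\to Q_J^\nu$ of i) holds and $Q_J^\nu$ is independent of the choice of $k$.

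For the projective property, let $J\subset J'$ be finite subsets of $\mathrm D$ and let $\pi^{J'}_J$ denote the natural coordinate projection. Repeated application of \eqref{ck} inside the iterated integral defining $Q_{J'}^{(n)}$ yields $(\pi^{J'}_J)_\#Q_{J'}^{(n)}=Q_J^{(n)}$ for all sufficiently large $n$, and since $\pi^{J'}_J$ is continuous the push-forward commutes with weak convergence, giving $(\pi^{J'}_J)_\#Q_{J'}^\nu=Q_J^\nu$. For the marginal assertion in ii), property i) of Lemma \ref{cklemma} says that $q^{(n)}_{s,s'}((x,y),\cdot)$ has $p_{s,s'}(x,\cdot)$ as its first $X$-marginal. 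Integrating out every second coordinate in the iterated integral defining $Q_J^{(n)}$ therefore produces exactly $P_J^{\nu_1}$, since $\nu$ has first marginal $\nu_1$; so $(\vec{e_1})_\#Q_J^{(n)}=P_J^{\nu_1}$ independently of $n$, and continuity of $\vec{e_1}$ transfers this identity to the weak limit. The analogous argument with the second marginal handles $\vec{e_2}$.

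Finally, the consistent family $\{Q_J^\nu:J\text{ finite}\subset\mathrm D\}$ together with the Kolmogorov extension theorem (already used for $P^\mu_{(0,t]}$ in the proof of Proposition \ref{prop:exbm}) yields a unique probability measure $Q_\mathrm D^\nu$ on $(X^2)^\mathrm D$ with $(\pi_J)_\#Q_\mathrm D^\nu=Q_J^\nu$ for every finite $J\subset\mathrm D$. Since the finite-dimensional distributions $(\vec{e_1})_\#Q_J^\nu=P_J^{\nu_1}$ agree with those of the law $P_\mathrm D^{\nu_1}$ on $X^\mathrm D$, the uniqueness in Kolmogorov's theorem identifies $(\vec{e_1})_\#Q_\mathrm D^\nu$ with $P_\mathrm D^{\nu_1}$, and likewise for $\vec{e_2}$. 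The only nonroutine step here is the diagonal extraction delivering a single subsequence $(n_l)$ that works for every finite $J\subset\mathrm D$; beyond that, the argument is a sequence of continuity observations for push-forwards under weak convergence combined with the tightness supplied by Lemma \ref{tightness}.
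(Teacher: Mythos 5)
Your proof is correct and follows essentially the same route as the paper: tightness (Lemma \ref{tightness}) plus Prokhorov and a diagonal extraction, passing the identities $(\vec{e_1})_\#Q_J^{(n)}=P_J^{\nu_1}$, $(\vec{e_2})_\#Q_J^{(n)}=P_J^{\nu_2}$ to the weak limit, and then Kolmogorov's extension theorem. The only difference is that you make explicit the projectivity check via \eqref{ck} and the nested sets $J_k$, which the paper's proof leaves implicit.
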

\begin{proof}
Lemma \ref{tightness} yields for each fixed $J$ the existence of a weakly converging subsequence $Q_J^{(n_l)}$ by virtue of Prokhorov's theorem. By a diagonal argument we may choose a subsequence such that $Q_J^{(n_l)}$ weakly converges for all finite $J\subset\mathrm  D$. Note that 
\begin{align*}
 (\vec{e_1})_\#Q_J^{(n_l)}=P_J^{\nu_1},\quad (\vec{e_2})_\#Q_J^{(n_l)}=P_J^{\nu_2}
\end{align*}
and hence the same holds true for the limit. We obtain the last assertion by applying Kolmogorov's extension theorem.
\end{proof}
The next theorem is in particular true for super-Ricci flows satisfying additionally \eqref{assumption3} and \eqref{assumption4}. This is summarized in Theorem \ref{thm3} which we prove in Section \ref{sec:proofofeq}.
\begin{thm}\label{brownian}
Let $(X,d_t,m_t)_{t\in I}$ be a family of RCD$(K,N)$ spaces such that \eqref{assumption1} and \eqref{assumption2} hold. Moreover we assume that the transport estimate
\eqref{eq:kuwa2} holds for every $p\in[1,\infty]$. Then,
for each $x,y\in X$ there exists a continuous stochastic process $(B_s)_{s\leq t}$ such that $(B_s)_{s\leq t}$ is a coupling of the backward Brownian motions 
$(B_s^1)_{s\leq t}$ and $(B_s^2)_{s\leq t}$ with values in $X$ and terminal distributions $\delta_x$ and $\delta_y$ respectively and it satisfies 
for $Q_\mathrm D^{(\delta_x,\delta_y)}$-a.e. path
\begin{align*}
d_{s}(B_{s}^1,B_{s}^2)\leq d_{t}(x,y),
\end{align*}
for each $s\leq t$.
\end{thm}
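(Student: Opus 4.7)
The plan is to apply Proposition \ref{weak} to the product measure $\nu := \delta_x \otimes \delta_y$ in order to couple the finite-dimensional distributions, then to promote the resulting discrete-time object to a continuous process via the Kolmogorov-type argument used in Proposition \ref{prop:exbm} applied to each marginal, and finally to extract the pathwise distance bound from the ``distance-decreasing'' properties of $q_{t,s}^*$ and Lemma \ref{cklemma}(ii) via weak convergence and a countable intersection argument.

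For the construction, I apply Proposition \ref{weak} with $\nu_1 = \delta_x$ and $\nu_2 = \delta_y$; this yields a probability measure $Q_\mathrm{D}^\nu$ on $(X^2)^\mathrm{D}$ whose push-forwards under $\vec{e_1}$ and $\vec{e_2}$ are the dyadic finite-dimensional distributions $P_\mathrm{D}^{\delta_x}$ and $P_\mathrm{D}^{\delta_y}$ of the two marginal backward Brownian motions given by Proposition \ref{prop:exbm}. Each marginal dyadic process admits a modification with continuous paths on $(0,t]$ by Lemma \ref{continuitylp} combined with the Kolmogorov continuity theorem, exactly as in the proof of Proposition \ref{prop:exbm}. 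Since continuity of an $X$-valued function along the countable dense set $\mathrm{D}$ on compact subintervals of $(0,t]$ can be tested by countably many conditions, the event on which both coordinate paths extend to continuous $X$-valued paths on $(0,t]$ has full $Q_\mathrm{D}^\nu$-measure. On this event I define $(B_s^1, B_s^2)_{s \leq t}$ by continuous extension, setting $B_t^1 = x$, $B_t^2 = y$ from the terminal condition; the marginals of the resulting process are precisely backward Brownian motions with terminal distributions $\delta_x$ and $\delta_y$, so it is the required coupling.

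For the distance estimate, iterating the definitions and combining the contractivity property of $q^*$ from the first lemma of this subsection with Lemma \ref{cklemma}(ii), for every finite $J = \{t_1 < \cdots < t_r\} \subset \mathrm{D}_m$ and every $n \geq m$ the measure $Q_J^{(n)}$ is concentrated on the closed subset
\[
C_J := \bigl\{((x_i,y_i))_{i=1}^r \in (X^2)^r : d_{t_i}(x_i,y_i) \leq d_t(x,y) \text{ for all } i\bigr\}
\]
of $(X^2)^r$. By Proposition \ref{weak}(i) one has $Q_J^{(n_l)} \to Q_J^\nu$ weakly in $\mathcal{P}((X^2)^r)$, and since $C_J$ is closed (each $d_{t_i}$ being continuous in its spatial arguments) the portmanteau theorem gives $Q_J^\nu(C_J) = 1$. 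Taking the countable intersection of these full-measure events over singleton $J \subset \mathrm{D}$ shows that $Q_\mathrm{D}^\nu$-almost surely $d_s(B_s^1, B_s^2) \leq d_t(x,y)$ simultaneously for every $s \in \mathrm{D}$. To extend to a general $s \in (0,t]$, I choose $s_k \in \mathrm{D}$ with $s_k \to s$: sample continuity of $B^1, B^2$ together with the log-Lipschitz assumption \eqref{assumption2} gives $d_{s_k}(B_{s_k}^1, B_{s_k}^2) \to d_s(B_s^1, B_s^2)$, and the bound passes to the limit; at $s = t$ the inequality is immediate from $B_t^i = x, y$.

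The main obstacle is the bookkeeping in the third paragraph: one has to carefully unwind the definition of $Q_J^{(n)}$ as an iterated composition of the dyadic kernels $q^{(n)}_{\cdot,\cdot}$ together with the non-dyadic initial kernel $q^*_{t, l 2^{-n}}$, and check that at every stage in the composition the distance at the new time is still dominated by $d_t(x,y)$. Once the set $C_J$ is shown to carry full $Q_J^{(n)}$-mass, weak convergence via the portmanteau theorem plus a countable intersection deliver the desired pathwise inequality, and the Kolmogorov continuity upgrade of the marginals converts this into the stated sample-continuous coupling.
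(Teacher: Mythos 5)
Your proposal is correct and follows essentially the same route as the paper: construct the coupling via the projective limit $Q_{\mathrm D}^\nu$ from Proposition \ref{weak}, upgrade to a sample-continuous process through the Kolmogorov continuity estimate of Lemma \ref{continuitylp}, and transfer the pathwise distance bound from the dyadic approximants $Q_J^{(n)}$ through the weak convergence. The only substantive variation is in how you pass the bound to the limit: you apply the portmanteau theorem to the closed set $C_J$ directly, whereas the paper tests $Q_J^{(n_l)}\to Q_J^\nu$ against the bounded continuous functions $(d_s\wedge R)^p$ and then sends $R,p\to\infty$; the two arguments are equivalent and yours is arguably the cleaner phrasing. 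Likewise, applying Kolmogorov to each marginal and intersecting the almost-sure uniform-continuity events is a harmless reformulation of the paper's single application to the pair with the product metric $\hat d$, and your explicit appeal to the log-Lipschitz condition \eqref{assumption2} when extending the bound from $\mathrm D$ to all $s\in(0,t]$ correctly spells out a continuity step the paper leaves implicit (one needs continuity of $s\mapsto d_s$ as well as of $s\mapsto B_s$).
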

\begin{proof}
Set $\nu=(\nu_1,\nu_2)=(\delta_x,\delta_y)$.
Consider the coordinate process $\pi_s=(\pi_s^1,\pi_s^2)\colon (X^2)^\mathrm D\to X^2$. 
Under $Q_\mathrm D^\nu$ the process $(\pi_s^1)_{s\in\mathrm D}$ has distribution 
$P_\mathrm D^{\nu_1}$ and satisfies the continuity property \eqref{eq:continuitylp}. The corresponding statement holds true for the process $(\pi_s^2)_{s\in\mathrm D}$. 
Hence, the process $\pi_t=(\pi_s^1,\pi_s^2)$ satisfies the Kolmogorov continuity theorem for $\alpha>2$
since
\begin{align*}
 E[\hat d_t(\pi_s,\pi_{s'})^\alpha]\leq &2^{\alpha/2}\Big(E[d_t(\pi_s^1,\pi_{s'}^1)^{\alpha}]+E[d_t(\pi_s^2,\pi_{s'}^2)^{\alpha}]\Big)\\
 \leq &c2^{\alpha/2}|s-s'|^{\alpha/2},
\end{align*}
with product metric $\hat d^2((x^1,y^1),(x^2,y^2))=d^2(x^1,x^2)+d^2(y^1,y^2)$.
Consequently there exists a continuous 
modification $(B_s)_{s\leq t}=(B_s^1,B_s^2)_{s\leq t}$ defined by
$B_s=\lim_{s'\to s,s\in\mathrm D}\pi_{s'}$
 for $Q_\mathrm D^\nu$-a.e. $\omega$ and all $s\leq t$, cf. Lemma 63.5 in \cite{bauerprob}. The process $(B_s^i)_{s\leq t}$, $i=1,2$ is a backward Brownian motion
 by continuity of $s\mapsto p_{t,s}(x,dy)$.
 
We need to justify that for $Q_\mathrm D^\nu$-a.e. path
\begin{align*}
d_{s}(B_{s}^1,B_{s}^2)\leq d_{t}(x,y).
\end{align*}
For each $n\in\mathbb N$ let $Q_{\mathrm D_n}^{(n)}$ be the projective limit of the family $(Q_J^{(n)})_{J\subset \mathrm D_n}$, which exists thanks to the
Kolmogorov extension theorem.
Consider the coordinate process $(\pi_s^{(n)})_{s\in\mathrm D_n}=(\pi_s^{1,(n)},\pi_s^{2,(n)})_{s\in\mathrm D_n}$
from $(X^2)^{\mathrm D_n}\to X^2$. Then $Q_{\mathrm D_n}^{(n)}$-a.e. we have $d_s(\pi_s^{1,(n)},\pi_s^{2,(n)})\leq d(x,y)$ by virtue of Lemma \ref{cklemma}. Applying Proposition
\ref{weak} and \textit{ii)} of Lemma \ref{cklemma} we obtain for a subsequence
\begin{align*}
 &E\big[(d_s(\pi_s^1,\pi_s^2)\wedge R)^p\big]^{1/p}=\lim_{l\to\infty}E\big[(d_s(\pi_s^{1,(n_l)},\pi_s^{2,(n_l)})\wedge R)^p\big]^{1/p}\\
 &\leq\lim_{l\to\infty} E\big[(d_t(x,y)\wedge R)^p\big]^{1/p}=d_t(x,y)\wedge R,
\end{align*}
for each $s\in\mathrm D$.
Letting $R$ and $p$ tend to $\infty$ we find for each $s\in\mathrm D$
\begin{align*}
 d_s(\pi_s^1,\pi_s^2)\leq d_t(x,y).
\end{align*}
Since the process $(B_s)_{s\in \mathrm D}$ is a modification we get for each $s\in\mathrm D$ and $Q_\mathrm D^\nu$-a.e.
$ d_s(B_s^1,B_s^2)\leq d_t(x,y)$. Since $\mathrm D\subset (0,t]$ is a dense and countable subset we obtain the result by continuity of $s\mapsto B_s(\omega)$.

\end{proof}

  \section{New characterization of super-Ricci flows}\label{sec:transport}
Let $(X,d_t,m_t)_{t\in I}$ be as in Section \ref{sec:proofofmain}. The main task of this section is to show that \eqref{eq:kuwa2} holds if and only if $(X,d_t,m_t)_{t\in I}$ is a super-Ricci flow.  From Definition \ref{supereva} we know already that the latter is equivalent to \eqref{eq:kuwa2} for $p=2$.
So we still need to show that \eqref{eq:kuwa2} holds for $p>2$. Crucial for this is to establish a stronger dynamic Bochner inequality.

  For this we afford more regularity of the map $r\mapsto \log d_r(x,y)$. We assume that there exists a  $\mathcal C^0$ map $r\mapsto h_r(x,y)$, 
  uniformly bounded $|h_r(x,y)|\leq C$ such that for each $s,t\in I$ and $x,y\in X$
  \begin{align}\label{assumption3}
   d_t(x,y)=d_s(x,y)e^{\int_s^th_r(x,y)\, dr}.
  \end{align}
  Consequently, for each $x,y\in X$, $r\mapsto \log d_r(x,y)$ is continuously differentiable with derivative
$h_r(x,y)=\frac{d}{dr}\log d_r(x,y)$.\\
\noindent
Moreover we assume that 
\begin{equation}
 \begin{aligned}\label{assumption4}
 &\forall x\in X, r\in I\text{ the limit }\lim_{y\to x}h_r(x,y):=H_r(x)\text{ exists, measurable in }x,\\
 &\text{and }r\mapsto H_r(x) \text{ is continuous }\forall x\in X.
\end{aligned}
\end{equation}

 We obtain the following lemma.
\begin{lma}
 Let $u\in\Lip(X)$. Then, assuming \eqref{assumption3} and \eqref{assumption4}, for all $s,t\in I$ and $x\in X$
 \begin{align*}
  \lip_t u(x)=\lip_s u(x) e^{-\int_s^t H_r(x)\, dr}.
 \end{align*}

\end{lma}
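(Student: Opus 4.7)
The plan is to work directly from the definition $\lip_t u(x) = \limsup_{y \to x} |u(y)-u(x)|/d_t(x,y)$, substitute the identity from \eqref{assumption3}, and then factor out the exponential term from the limit. First I would write
\begin{align*}
\frac{|u(y)-u(x)|}{d_t(x,y)} = \frac{|u(y)-u(x)|}{d_s(x,y)}\cdot \exp\!\Bigl(-\int_s^t h_r(x,y)\,dr\Bigr),
\end{align*}
so the identity reduces to showing that the exponential factor tends to $\exp(-\int_s^t H_r(x)\,dr)$ as $y\to x$, after which the desired formula follows from the elementary fact that $\limsup_{y\to x}(a(y)\,b(y)) = b_0 \cdot \limsup_{y\to x}a(y)$ whenever $b(y)\to b_0>0$.

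To establish the convergence of the exponential, fix any sequence $y_n\to x$ in the common topology induced by the equivalent metrics $d_r$ (recall \eqref{assumption2}). By \eqref{assumption4}, $h_r(x,y_n)\to H_r(x)$ for every $r\in[s,t]$. The uniform bound $|h_r(x,y_n)|\leq C$ from \eqref{assumption3} supplies a constant dominating function on the finite interval $[s,t]$, so the dominated convergence theorem yields
\begin{align*}
\int_s^t h_r(x,y_n)\,dr \longrightarrow \int_s^t H_r(x)\,dr,
\end{align*}
and continuity of $\exp$ gives the desired convergence of the exponential factor (which is a strictly positive limit since the integrand is bounded).

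Putting the two pieces together, for any sequence $y_n\to x$,
\begin{align*}
\limsup_{n\to\infty}\frac{|u(y_n)-u(x)|}{d_t(x,y_n)}
= e^{-\int_s^t H_r(x)\,dr}\cdot \limsup_{n\to\infty}\frac{|u(y_n)-u(x)|}{d_s(x,y_n)},
\end{align*}
and taking the supremum over sequences $y_n\to x$ on both sides gives $\lip_t u(x) = e^{-\int_s^t H_r(x)\,dr}\,\lip_s u(x)$. The only real subtlety is the justification for pulling the exponential out of the $\limsup$, which is handled cleanly by dominated convergence together with the uniform boundedness of $h_r$; there is no genuine obstacle here, and the argument is essentially a one-line computation once the convergence of the exponential is established.
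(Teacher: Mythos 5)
Your proof is correct and takes essentially the same approach as the paper: rewrite the difference quotient using \eqref{assumption3}, then use \eqref{assumption4} together with dominated convergence (justified by the uniform bound on $h_r$) to control the exponential factor. The only cosmetic difference is that you observe the exponential factor has a genuine limit and so factors out of the $\limsup$ in one step, whereas the paper argues by two one-sided inequalities (proving $\leq$ and then exchanging the roles of $s$ and $t$); your version is marginally cleaner but not a different route.
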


\begin{proof}
For $s<t$, we obtain from the very definition of the local slope
 \begin{align*}
  \lip_tu(x)=&\limsup_{y\to x}\frac{|u(y)-u(x)|}{d_t(x,y)}\leq\limsup_{y\to x}\frac{|u(y)-u(x)|}{d_s(x,y)}e^{-\liminf_{y\to x}\int_s^th_r(x,y)\, dr}\\
  =&\lip_su(x)e^{-\int_s^tH_r(x)\, dr},
 \end{align*}
where we applied dominated convergence.
Changing the roles of $s$ and $t$ yields
\begin{align*}
  \lip_su(x)=&\limsup_{y\to x}\frac{|u(y)-u(x)|}{d_s(x,y)}\leq\limsup_{y\to x}\frac{|u(y)-u(x)|}{d_t(x,y)}e^{-\liminf_{y\to x}\int_s^th_r(x,y)\, dr}\\
  =&\lip_tu(x)e^{-\int_s^tH_r(x)\, dr},
 \end{align*}
 which proves the assertion.
\end{proof}
We apply our observation to the minimal relaxed gradient. We say that $G\in L^2(X,m_t)$ is a \emph{$t$-relaxed gradient} of $u\in L^2(X,m_t)$ if there exists Lipschitz functions
$u_n\in L^2(X,m_t)$ such that
\begin{align*}
 u_n\to u\text{ in }L^2(X,m_t)\text{ and }\lip_tu_n\rightharpoonup\tilde G \text{ in }L^2(X,m_t),\, \tilde G\leq G\, m\text{-a.e. in }X. 
\end{align*}
$G$ is the \emph{minimal $t$-relaxed gradient} $|\nabla_tu|_*$ if its $L^2(X,m_t)$ norm is minimal among all relaxed gradients, see \cite[Definition 4.2]{agscalc}.
The collection of all $t$-relaxed gradients is convex and closed in $L^2(X,m_t)$ \cite[Lemma 4.3]{agscalc}.
\begin{Prop}\label{propdiffbar}
Assume \eqref{assumption3} and \eqref{assumption4} hold.
Then, for $m$-a.e. $x\in X$, we have
 \begin{align*}
  |\nabla_tu|_*(x)=|\nabla_su|_*(x)e^{-\int_s^tH_r(x)\, dr}
 \end{align*}
for each $u\in\F$ and for all $s, t\in I$.
In particular for $m$-a.e. $x\in X$, $t\mapsto|\nabla_tu|_*(x)$ is continuously differentiable.
\end{Prop}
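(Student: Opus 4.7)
The plan is to bootstrap from the pointwise identity for Lipschitz functions (just proved) to the full statement by a relaxation argument. Set $\phi_{s,t}(x):=e^{-\int_s^t H_r(x)\,dr}$. Since $|h_r|\le C$ on $X\times X$, the limit $H_r$ is also bounded by $C$, so $\phi_{s,t}$ is bounded uniformly in $x,s,t$; moreover $H_\cdot(x)$ is continuous and $H_r(\cdot)$ is measurable, hence $(r,x)\mapsto H_r(x)$ is jointly measurable by a Carathéodory-type argument, and $\phi_{s,t}$ is measurable in $x$.

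For $u\in\F$ I would fix $s\in I$ and select Lipschitz approximants $u_n\in\Lip_b(X)$ with $u_n\to u$ in $\mathcal H$ and $\lip_s u_n\to|\nabla_s u|_*$ \emph{strongly} in $L^2(X,m_s)$. Such a sequence exists: the Cheeger energy recipe furnishes a sequence realizing $\int(\lip_s u_n)^2\,dm_s\to\int|\nabla_s u|_*^2\,dm_s$ with $\lip_s u_n\rightharpoonup|\nabla_s u|_*$ weakly, and Mazur's lemma on convex combinations upgrades this to strong convergence (keeping the almost-sure upper bound). By \eqref{assumption1} the norms of $L^2(X,m_s)$ and $L^2(X,m_t)$ are equivalent, so the same convergences persist in $L^2(X,m_t)$. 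The previous lemma gives pointwise $\lip_t u_n=\phi_{s,t}\cdot\lip_s u_n$, and multiplication by the bounded factor $\phi_{s,t}$ preserves strong $L^2$ convergence. Hence
\[
\lip_t u_n\longrightarrow \phi_{s,t}\,|\nabla_s u|_*\quad\text{strongly in }L^2(X,m_t).
\]
By the very definition of a $t$-relaxed gradient, this exhibits $\phi_{s,t}|\nabla_s u|_*$ as a $t$-relaxed gradient of $u$, and minimality gives $|\nabla_t u|_*\le \phi_{s,t}|\nabla_s u|_*$ $m$-a.e. Swapping the roles of $s$ and $t$ and using $\phi_{t,s}=\phi_{s,t}^{-1}$ yields the reverse inequality, so equality holds.

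For the continuous differentiability, I would upgrade the "a.e. depending on $s,t$" statement to a single exceptional set by taking the countable union of null sets as $(s,t)$ ranges over $(I\cap\mathbb Q)^2$. On the complement $X\setminus N$ the identity $|\nabla_t u|_*(x)=\phi_{s_0,t}(x)|\nabla_{s_0} u|_*(x)$ holds for every rational $t$ and some fixed rational $s_0$; the right-hand side is continuous in $t$ (even $\mathcal C^1$, since $r\mapsto H_r(x)$ is continuous), so it furnishes the canonical representative of $t\mapsto|\nabla_t u|_*(x)$ for all $t\in I$, and the fundamental theorem of calculus gives
\[
\partial_t|\nabla_t u|_*(x)=-H_t(x)\,|\nabla_t u|_*(x),
\]
a continuous function of $t$.

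The main obstacle is the relaxation step: justifying that multiplying a relaxed-gradient approximation by the bounded multiplier $\phi_{s,t}$ produces an \emph{admissible} relaxed-gradient approximation at the new time. This requires first securing strong (not merely weak) $L^2$ convergence of $\lip_s u_n$, which is why Mazur's lemma is essential; had one only weak convergence, one would still obtain a valid relaxed gradient but the upper bound $|\nabla_t u|_*\le \phi_{s,t}|\nabla_s u|_*$ would need a separate convexity argument. The rest of the proof is largely bookkeeping, including the mild issue of selecting a joint-in-$(s,t,x)$ representative, which is handled by the countable-dense argument above.
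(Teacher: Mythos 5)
Your core argument is essentially the paper's: take Lipschitz approximations $u_n\to u$ with $\lip_s u_n\to|\nabla_s u|_*$ strongly in $L^2(X,m_s)$ (the paper simply cites \cite[Lemma 4.3]{agscalc} for this; your Mazur detour is not needed, since weak convergence together with convergence of the $L^2$-norms already yields strong convergence, and if you do pass to convex combinations $\tilde u_k$ you only control $\lip_s\tilde u_k$ from above by the convex combination of the slopes -- which still suffices because the definition of a $t$-relaxed gradient allows a dominated weak limit $\tilde G\le G$), multiply by the bounded factor $e^{-\int_s^t H_r\,dr}$ via the preceding lemma, conclude $|\nabla_t u|_*\le e^{-\int_s^t H_r\,dr}|\nabla_s u|_*$ from the pointwise minimality of the minimal relaxed gradient (this is \cite[Lemma 4.4]{agscalc}, not merely minimality of the $L^2$-norm), swap $s$ and $t$, and then remove the time-dependence of the null set through rational times plus continuity of $r\mapsto H_r(x)$; the closing ODE argument for the $\mathcal C^1$ statement is fine and matches the paper.

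What your proposal does not address is the quantifier ``for $m$-a.e.\ $x$ \dots for each $u\in\F$'': your exceptional set depends on $u$, since the whole construction is carried out for one fixed $u$ and your countable-density step only covers $(s,t)$. The paper removes the $u$-dependence by a second density argument: choose a countable dense set $C\subset\F$ (via \cite[Proposition 4.10]{agsmet}), obtain the identity $m$-a.e.\ simultaneously for all $u\in C$ and all $s,t\in I$, and then approximate a general $u\in\F$ by $u_n\in C$ with $|\nabla_t u_n|_*\to|\nabla_t u|_*$ in $L^2$, passing to an $m$-a.e.\ convergent subsequence whose convergence at the other time $s$ is then forced by the identity itself. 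This is a routine step of the same flavor as your rational-times argument, but it is required to match the statement as written, so you should add it.
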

\begin{proof}
Assume $s\leq t$.
 Let $u_n\in L^2(X,m_s)$ be a sequence of Borel Lipschitz functions such that $u_n\to u$ and $\lip_su_n\to |\nabla_s u|_*$ in $L^2(X,m_s)$, see Lemma 4.3 in
 \cite{agscalc}.
 Then 
 since $H$ is uniformly bounded
 \begin{align*}
  \lip_su_n(\cdot)e^{-\int_s^tH_r(\cdot)\, dr}\to |\nabla_s u|_*(\cdot)e^{-\int_s^tH_r(\cdot)\, dr} \text{ in }L^2(X,m_s).
 \end{align*}
This implies that $|\nabla_s u|_*(\cdot)e^{-\int_s^tH_r(\cdot)\, dr}$ is a relaxed gradient of $u$ with respect to the $d_t$ norm, and hence from Lemma 4.4 
in \cite{agscalc}
\begin{align*}
 |\nabla_tu|_*(\cdot)\leq |\nabla_su|_*(\cdot)e^{-\int_s^tH_r(\cdot)\, dr}\quad m\text{- a.e. in }X.
\end{align*}
Changing the roles of $s$ and $t$ yields that 
\begin{align*}
 |\nabla_tu|_*(\cdot)= |\nabla_su|_*(\cdot)e^{-\int_s^tH_r(\cdot)\, dr}\quad m\text{- a.e. in }X.
\end{align*}
Choosing $s$ and $t$ from a dense and countable set $D$ in $I$ the argument from above implies that $m$-a.e. in X
\begin{align}\label{dasteil}
 |\nabla_{t}u|_*(\cdot)= |\nabla_{s}u|_*(\cdot)e^{-\int_{s}^{t}H_r(\cdot)\, dr}
\end{align}
for each $s$ and $t$ in $D$.
Since the dependence of the left and the right side of the equality is continuous with respect to $s$ and $t$,
 we conclude that for $m$-a.e. $x\in X$, $|\nabla_{t}u|_*(\cdot)= |\nabla_{s}u|_*(\cdot)e^{-\int_{s}^{t}H_r(\cdot)\, dr}$
holds for every $s$ and $t$ in $I$. 

Similarly, we choose $u$ in a dense and countable set $C$ in $\F$ (\cite[Proposition 4.10]{agsmet}) and obtain that $m$-a.e.
equation \eqref{dasteil} holds for every $s,t\in I$ and every $u\in C$. Given $u\in\F$ we approximate $u$ by a sequence $u_n\in C$, i.e.
$|\nabla_tu_n|\to|\nabla_t u|$ in $L^2(X,m_t)$. Then there exists a subsequence $u_{n_k}$ such that for $m$-a.e. $x\in X$, $|\nabla_tu_{n_k}|(x)\to|\nabla_t u|(x) $. 
Equality \eqref{dasteil} implies that for the same subsequence $|\nabla_su_{n_k}|(x)\to|\nabla_s u|(x) $ for $m$-a.e. $x$. Hence we showed that for $m$-a.e. $x\in X$,
\eqref{dasteil} holds for every $u\in\F$ and every $s,t\in I$.

The last assertion follows directly from the fact that $r\mapsto H_r(x)$ is supposed to be continuous for all $x\in X$.

\end{proof}

We will now state the main theorem of this section.

 \begin{thm}\label{thm3}
Let $(X,d_t,m_t)_{t\in I}$ be a one-parameter family of geodesic Polish metric measure spaces satisfying \eqref{assumption1}, \eqref{assumption2}, \eqref{assumption3} and \eqref{assumption4} such that each $(X,d_t,m_t)$ is a RCD$(K,N)$ space.. Then, $(X,d_t,m_t)_{t\in I}$ is a super-Ricci flow if and only if one of the following equivalent properties holds
\begin{enumerate}
\item[i)] for all $s\leq t$, $\alpha\in[1/2,1]$ and $u\in \Dom(\Ch)$
 \begin{align*}
 (\Gamma_t(P_{t,s}u))^\alpha\leq P_{t,s}(\Gamma_s(u)^\alpha)\quad m\text{-a.e.},
     \end{align*}  
\item[ii)] for all $s\leq t$, $p\in[1,\infty]$ and $\mu,\nu\in\mathcal P(X)$
\begin{align*}
W_{p,s}(\hat P_{t,s}\mu,\hat P_{t,s}\nu)\leq W_{p,t}(\mu,\nu),
\end{align*}
\item[iii)] there exists a coupling of backward Brownian motions
$(B_s^1,B_s^2)_{s\leq t}$ terminating in $x$ and $y$ respectively such that for all $s\leq t$
\begin{align*}
 d_{s}(B_{s}^1,B_{s}^2)\leq d_t(x,y) \text{ almost surely}.
\end{align*}
\end{enumerate}
\end{thm}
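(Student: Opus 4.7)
The plan is to close the cycle
\[
\text{SRF} \;\Longrightarrow\; (i) \;\Longrightarrow\; (ii) \;\Longrightarrow\; (iii) \;\Longrightarrow\; \text{SRF},
\]
thereby establishing equivalence of all four properties at once. First I would invoke Theorems \ref{thm2intro} and \ref{thm1intro} to obtain $\text{SRF} \Rightarrow (i)$: the super-Ricci hypothesis delivers the pointwise dynamic Bochner inequality, and a time-dependent Bakry-type self-improvement argument then upgrades the $\alpha=1$ gradient estimate to every $\alpha \in [1/2,1]$. Given $(i)$, the time-dependent analogue of Kuwada's duality \cite[Theorem 2.2]{kuwadadual} converts the gradient estimate with exponent $q = 2\alpha \in [1,2]$ into the $W_p$-transport estimate with Hölder-conjugate exponent $p = q/(q-1) \in [2,\infty]$, proving $(ii)$ in the range $p \in [2,\infty]$. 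The endpoint $p = \infty$ (coming from $\alpha = 1/2$) in particular supplies the transport bound needed for the measurable-selection step underpinning Theorem \ref{brownian}, which produces the pathwise coupling asserted in $(iii)$.

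The implication $(iii) \Rightarrow (ii)$ for every $p \in [1,\infty]$, which simultaneously fills the gap at $p \in [1,2)$, is a short coupling argument: given $\mu,\nu \in \mathcal P(X)$ and $p$, I would fix a $W_{p,t}$-optimal coupling $\gamma^* \in \Pi(\mu,\nu)$ and run a coupling of backward Brownian motions with terminal distribution $\gamma^*$ via Proposition \ref{weak} applied to $\nu = \gamma^*$. The joint law of $(B_s^1,B_s^2)$ is then a coupling of $\hat P_{t,s}\mu$ and $\hat P_{t,s}\nu$, and the pathwise bound from $(iii)$ yields
\[
W_{p,s}(\hat P_{t,s}\mu,\hat P_{t,s}\nu)^p \leq E\bigl[d_s(B_s^1,B_s^2)^p\bigr] \leq E\bigl[d_t(B_t^1,B_t^2)^p\bigr] = W_{p,t}(\mu,\nu)^p,
\]
with the obvious $L^\infty$ modification when $p=\infty$. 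Specializing $(ii)$ to $p=2$ is then exactly condition $ii)$ of Definition \ref{supereva}, recovering SRF and closing the cycle.

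The main obstacle is the first step, namely establishing the pointwise dynamic Bochner inequality (Theorem \ref{thm2intro}) and running the self-improvement in Theorem \ref{thm1intro}. The time-dependence of $(d_t,m_t)$ destroys commutativity of $\Delta_t$ with $P_{t,s}$, the domain $\Dom(\Delta_t)$ itself varies with $t$, and the term $\partial_t\Gamma_t(u)$ must be controlled via the regularity hypotheses \eqref{assumption3}--\eqref{assumption4} and Proposition \ref{propdiffbar}. The self-improvement from $\alpha=1$ to $\alpha \in [1/2,1]$ is then carried out by differentiating in an intermediate parameter $r\in[s,t]$ a regularized quantity of the form $P_{t,r}\bigl((\Gamma_r(P_{r,s}u)+\varepsilon)^\alpha\bigr)$, matching the resulting chain-rule expression term-by-term against the pointwise Bochner inequality, and finally letting $\varepsilon \downarrow 0$; it is precisely here that the pointwise (rather than merely integrated) formulation of Bochner and the $\mathcal C^1$-regularity of $r\mapsto \Gamma_r(u)$ become indispensable. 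Once these two ingredients are secured, the remaining implications are formal consequences of Kuwada's duality, Theorem \ref{brownian}, and Definition \ref{supereva}.
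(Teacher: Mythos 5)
Your proposal is correct and follows essentially the same route as the paper: super-Ricci flow gives the pointwise dynamic Bochner inequality (Theorem \ref{thm2}), which self-improves to the gradient estimates $i)$ (Theorem \ref{thm1}); Kuwada's duality turns these into the transport estimates $ii)$; Theorem \ref{brownian} produces the coupling in $iii)$; and your $iii)\Rightarrow ii)$ argument via an optimal terminal plan is exactly the computation in the paper's proof, with $p=2$ in $ii)$ recovering the super-Ricci flow through Definition \ref{supereva}. There are only two small divergences worth recording. First, the paper (Lemma \ref{cor:kuwa}) derives $ii)$ for \emph{all} $p\in[1,\infty]$ before touching Brownian motions, covering $p\in[1,2)$ by Jensen's inequality, $\lip_t(P_{t,s}u)\leq P_{t,s}(\lip_s(u)^\beta)^{1/\beta}$ for $\beta\geq 2$, so that Theorem \ref{brownian} (whose stated hypothesis is \eqref{eq:kuwa2} for every $p\in[1,\infty]$) applies verbatim; you instead invoke Theorem \ref{brownian} with only $p\in[2,\infty]$ in hand, which is substantively fine because its proof only uses the large-$p$ estimates, but it requires opening up that proof rather than citing the theorem as stated, and you then recover $p\in[1,2)$ through the detour $iii)\Rightarrow ii)$ — the paper's Jensen step is the more economical fix. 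Second, the precise improved gradient estimate (Theorem \ref{thm1}) is only established for a.e.\ pair of times; the paper's Lemma \ref{cor:kuwa} upgrades it to all $s\leq t$ by going through Kuwada's duality together with the continuity of $(s,t)\mapsto W_{p,s}(\hat P_{t,s}\mu,\hat P_{t,s}\nu)$ supplied by Lemma \ref{continuitylp}, and then back again. Your outline quotes the all-time statement and skips this bridging step, so to make the cycle close at every time you would need to add this continuity argument (or an equivalent one); with that inserted, your proof matches the paper's.
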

The proof of this is given in Section \ref{sec:proofofeq}. In the following two subsections we prepare the necessary ingredients for the proof.

  \subsection{Dynamic Bochner inequality and gradient estimates}
As stated in Definition \ref{supereva}, the dynamic Bochner inequality \eqref{est-IVeva} is equivalent to the $L^2$-gradient estimate \eqref{est-IIIeva}. In the static case (``$\partial_t\Gamma_t=0$") it is well-known \cite{savare, bakrytrans} that it is also equivalent to the stronger $L^1$-gradient estimate, which yields stronger transport estimates and contraction estimates of couplings of Brownian motions \cite{sturmrenesse, sturmcoup}.

The aim in this section is to prove a time-dependent version of the $L^1$-gradient estimate
\begin{align}\label{eq:kuwa2a}
|\nabla_tP_{t,s}u|_*\leq P_{t,s}(|\nabla_s u|_*),
\end{align}
which in return will imply the stronger transport estimates \eqref{eq:kuwa2} by Kuwada's duality. For this we give a new, more appealing definition of a time-dependent Bochner inequality.
\begin{Def}
 We say that the \emph{pointwise dynamic Bochner inequality} holds at time $t$ if for all $u\in\Dom(\Delta_t)\cap L^\infty(X,m_t)$ such that $\Gamma_t(u)\in L^\infty(X,m_t)$, and all 
 $g\in\Dom(\Delta_t)\cap L^\infty(X,m_t)$ with $g\geq 0$
 \begin{align}\label{Bochnereva}
 \frac12 \int\Gamma_t(u)\Delta_tg\, dm_t+\int(\Delta_tu)^2g+\Gamma_t(u,g)\Delta_tu\, dm_t\geq\frac12\int(\partial_t\Gamma_t)(u)g\, dm_t.
 \end{align}

\end{Def}
This is a ``real'' Bochner inequality in the sense that on the one hand $u$ and $g$ do not have to arise as a heat flow 
(see \cite[Definition 5.5]{sturm2016}), and on the other we employ the time-derivative
$\partial_t\Gamma_t(u)$ in contrast to \cite[Definition 5.5, Definition 5.6]{sturm2016}). We will show in Section \ref{sec:bochner} that \eqref{Bochnereva} holds if and only if $(X,d_t,m_t)_{t\in I}$ is a super-Ricci flow, provided \eqref{assumption3} and \eqref{assumption4} hold. Under sufficient regularity of $u$ ($\Delta_t u\in\Dom(\E)$, $\Gamma_t (u)\in \Dom(\Delta_t)$), the left hand side can be expressed with the usual $\Gamma_{2,t}(u)=\frac12\Delta_t\Gamma_t(u)-\Gamma_t(u,\Delta_t u)$, but for the purpose of Section \ref{sec:bochner}, we deal with less regularity.

In the following theorem we assume that
\begin{align}\label{reg-bocheva}
 u_r\in\Lip(X)\text{ for all }r\in  (s,t)\text{ with }\sup_{r,x}\lip_ru_r(x)<\infty,
\end{align}
where $u_r=P_{r,s}u$ and $u$ is some Lipschitz function. This is not a restriction since the $L^2$-gradient estimate \eqref{est-IIIeva} in Definition \ref{supereva} will imply this.
 \begin{thm}\label{thm1}
Let $(X,d_t,m_t)_{t\in I}$ be a one-parameter family of geodesic Polish metric measure spaces satisfying \eqref{assumption1}, \eqref{assumption2}, \eqref{assumption3}
and \eqref{assumption4} such that each $(X,d_t,m_t)$ is a RCD$(K,N)$ space. Then, if the pointwise dynamic Bochner inequality \eqref{Bochnereva} for each $s\leq r\leq t$ and the regularity assumption \eqref{reg-bocheva} is satisfied,
for every $\alpha\in[1/2,1]$ we have for a.e.\ $\tau\leq t$ and $\sigma\geq s$ and every $u\in\Dom(\Ch)$
 \begin{align*}
( \Gamma_\tau(P_{\tau,\sigma}u))^\alpha\leq P_{\tau,\sigma}(\Gamma_\sigma(u)^\alpha) \quad m\text{-a.e..}
     \end{align*} 
\end{thm}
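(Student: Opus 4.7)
I will adapt Savar\'e's self-improvement argument \cite{savare} to the time-dependent setting, with the pointwise dynamic Bochner inequality \eqref{Bochnereva} replacing the static Bakry--\'Emery condition. Fix $\sigma\le\tau$ in $I$, $u\in\Lip(X)$ with $\Gamma_\sigma(u)$ bounded, and a sufficiently regular nonnegative test function $g$ (e.g.\ $g\in L^\infty\cap\Dom(\Delta_\tau)$ with $\Delta_\tau g\in L^\infty$). Set $u_r:=P_{r,\sigma}u$, $h_r:=P^*_{\tau,r}g$, and $w_r:=\Gamma_r(u_r)$. The regularity assumption \eqref{reg-bocheva} together with Proposition \ref{propdiffbar} ensures that $r\mapsto w_r(x)$ is $\mathcal C^1$ for $m$-a.e.\ $x$ with derivative
\[
\partial_r w_r=(\partial_r\Gamma_r)(u_r)+2\Gamma_r(u_r,\Delta_r u_r).
\]

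\textbf{The monotone functional.} For $\varepsilon>0$ and $\phi_\varepsilon(x):=(x+\varepsilon)^\alpha$, consider
\[
F_\varepsilon(r):=\int \phi_\varepsilon(w_r)\,h_r\,dm_r,
\]
and aim to show $r\mapsto F_\varepsilon(r)$ is nonincreasing; this will give $F_\varepsilon(\tau)\le F_\varepsilon(\sigma)$, which, after sending $\varepsilon\downarrow0$ and exploiting the arbitrariness of $g$, implies the desired pointwise estimate. Differentiating and using the adjoint heat equation for $h_r$ (which cancels the $\partial_r f_r$-term coming from $dm_r=e^{-f_r}dm$) together with the formula for $\partial_r w_r$, and integrating by parts once, one obtains
\[
\partial_r F_\varepsilon(r)=\!\int\phi'_\varepsilon(w_r)\bigl[(\partial_r\Gamma_r)(u_r)+2\Gamma_r(u_r,\Delta_r u_r)\bigr]h_r\,dm_r+\!\int\phi'_\varepsilon(w_r)\Gamma_r(w_r,h_r)\,dm_r.
\]

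\textbf{Bochner and self-improvement.} Inserting the Bochner inequality \eqref{Bochnereva} with the nonnegative test function $\tilde g:=\phi'_\varepsilon(w_r)h_r$ to dominate the term containing $\Gamma_r(u_r,\Delta_r u_r)$, and then integrating by parts once more via the chain and Leibniz rules for $\Gamma_r$, the $(\partial_r\Gamma_r)$-contributions cancel and one is left with
\[
\partial_r F_\varepsilon(r)\le-\int\phi''_\varepsilon(w_r)\,\Gamma_r(w_r)\,h_r\,dm_r,
\]
which has the \emph{wrong} sign since $\phi''_\varepsilon\le0$. The decisive step is therefore to upgrade \eqref{Bochnereva} to its self-improved form. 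Following \cite{savare}, I apply \eqref{Bochnereva} to the perturbed functions $u_r+s\eta$ for arbitrary $\eta\in\Dom(\Delta_r)\cap L^\infty$; the resulting nonnegative quadratic form in $s$, optimized pointwise, yields
\[
\Gamma_{2,r}(u_r)(\tilde g)\ge\tfrac12\!\int(\partial_r\Gamma_r)(u_r)\tilde g\,dm_r+\!\int\frac{\Gamma_r(u_r,w_r)^2}{4(w_r+\varepsilon)}\tilde g\,dm_r.
\]
Re-running the earlier computation with this refined Bochner subtracts the additional nonnegative quantity $\tfrac12\!\int\phi'_\varepsilon(w_r)\tfrac{\Gamma_r(u_r,w_r)^2}{w_r+\varepsilon}h_r\,dm_r$ from the upper bound on $\partial_r F_\varepsilon$. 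A direct pointwise check, using $\phi_\varepsilon(x)=(x+\varepsilon)^\alpha$ and Cauchy--Schwarz for $\Gamma_r$, then shows that the combined upper bound is nonpositive precisely when $\alpha\in[1/2,1]$, whence $\partial_r F_\varepsilon\le0$ on this range.

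\textbf{Conclusion.} Monotonicity of $F_\varepsilon$ yields
\[
\int(w_\tau+\varepsilon)^\alpha g\,dm_\tau\le\int(\Gamma_\sigma u+\varepsilon)^\alpha P^*_{\tau,\sigma}g\,dm_\sigma=\int P_{\tau,\sigma}\bigl((\Gamma_\sigma u+\varepsilon)^\alpha\bigr)g\,dm_\tau.
\]
Dominated convergence (justified by $\sup_{r,x}\lip_r u_r(x)<\infty$ from \eqref{reg-bocheva}) allows $\varepsilon\downarrow0$, and arbitrariness of $g\ge0$ in a sufficiently rich class gives $(\Gamma_\tau(P_{\tau,\sigma}u))^\alpha\le P_{\tau,\sigma}(\Gamma_\sigma(u)^\alpha)$ $m$-a.e. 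The extension from $u\in\Lip(X)$ to $u\in\Dom(\Ch)$ is then obtained by density and lower semicontinuity of the minimal relaxed gradient. The main obstacle is the self-improvement step: one must track the time-dependent term $(\partial_r\Gamma_r)(u_r)$ through Bakry's quadratic-optimization argument, but its bilinear dependence on $u_r$ (a consequence of \eqref{assumption3}--\eqref{assumption4} and Proposition \ref{propdiffbar}) ensures that the classical trick carries over with only bookkeeping modifications.
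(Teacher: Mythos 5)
Your overall strategy --- proving monotonicity of $r\mapsto \int \omega_\varepsilon(\Gamma_r(P_{r,\sigma}u))\,P^*_{\tau,r}g\,dm_r$ with $\omega_\varepsilon(x)=(x+\varepsilon)^\alpha$ and feeding a self-improved form of \eqref{Bochnereva} into the resulting differential inequality --- is exactly the paper's, and your identification of the wrong-sign term $-\int\phi''_\varepsilon(w_r)\Gamma_r(w_r)h_r\,dm_r$ is correct. But the decisive step is asserted, not proved, and as you describe it it fails. Applying \eqref{Bochnereva} to the \emph{linear} perturbations $u_r+s\eta$ and optimizing the quadratic in $s$ only yields a Cauchy--Schwarz inequality for the bilinear form $(u,v)\mapsto\Gamma_{2,r}(u,v)(\tilde g)-\tfrac12\int(\partial_r\Gamma_r)(u,v)\tilde g\,dm_r$; it cannot generate the Hessian-type excess term. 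The genuine self-improvement requires inserting \emph{nonlinear} (polynomial/product) combinations into the Bochner inequality and a second-order chain rule for $\Gamma_{2,t}$ with Hessian terms (Lemma \ref{fundamental}); before that one must show $\Gamma_t(u)\in\F$ and that $\Gamma_{2,t}(u)$ extends to a finite Borel measure charging no $\E_t$-polar sets, so that an $m$-a.e.\ inequality for its density $\gamma_{2,t}(u)$ even makes sense (Proposition \ref{mystuff}, via Lemma \ref{sstuff} and quasi-regularity); and finally one chooses $u_3=\Gamma_t(u_1,u_2)$ in the resulting Hessian bound to reach Proposition \ref{gammaestimate}: $\Gamma_t(\Gamma_t(u))\le 4\big(\gamma_{2,t}(u)-\tfrac12\partial_t\Gamma_t(u)\big)\Gamma_t(u)$. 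None of this is ``bookkeeping''; it is the core of the proof, and the time-derivative term must be carried through each of these steps.

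Moreover, even granting your claimed improved inequality, its excess $\int\frac{\Gamma_r(u_r,w_r)^2}{4(w_r+\varepsilon)}\tilde g\,dm_r$ is too weak to close the argument: to absorb $|\omega''_\varepsilon(w_r)|\Gamma_r(w_r)$ one needs a lower bound on the excess by a multiple of $\Gamma_r(\Gamma_r(u_r))/(w_r+\varepsilon)$, whereas Cauchy--Schwarz only gives $\Gamma_r(u_r,w_r)^2\le w_r\,\Gamma_r(w_r)$, an upper bound in the wrong direction (indeed $\Gamma_r(u_r,\Gamma_r(u_r))$ may vanish while $\Gamma_r(\Gamma_r(u_r))>0$, as on a manifold where $\mathrm{Hess}\,u\,\nabla u\perp\nabla u$). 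This is precisely why the full inequality of Proposition \ref{gammaestimate} is required. Two further points you gloss over: the test function $\tilde g=\omega'_\varepsilon(w_r)h_r$ is not in $\Dom(\Delta_r)$, so \eqref{Bochnereva} cannot be applied to it directly --- again the measure-valued extension of $\Gamma_{2,r}$ is what allows testing against general $g\in\F$; and the claimed $\mathcal C^1$-regularity of $r\mapsto\Gamma_r(u_r)(x)$ with $\partial_r w_r=(\partial_r\Gamma_r)(u_r)+2\Gamma_r(u_r,\Delta_r u_r)$ is not available here ($\Delta_r u_r$ exists only for a.e.\ $r$), which is why the paper works with difference quotients and Lebesgue density points instead of differentiating $F_\varepsilon$ --- this is also the reason the conclusion holds only for a.e.\ $\sigma$ and $\tau$.
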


In order to prove the theorem we adapt the strategy in \cite{savare}, but we have to take care of the new term on the 
right hand side of \eqref{Bochnereva} and of the different domains in \eqref{Bochnereva} compared to \cite{savare}.
  \subsubsection*{Quasi-regular Dirichlet forms}
  We briefly recall the notion of quasi-regular Dirichlet forms developed in \cite{Ma} and \cite{fukushima2012}.
  We denote by $\F=\{u\in L^2(X,m)|\E(u)<\infty\}$ the domain of a Dirichlet form $\E\colon L^2(X,m)\to [0,\infty]$, where $X$ is a Polish space and $m$ is a $\sigma$-finite Borel measure. $\F$ is a Hilbert space with norm $||u||_\F^2=||u||^2_{L^2(X,m)}+\E(u)$. If $F$ is a closed set in $X$ we denote
  \begin{align*}
  \F_F:=\{u\in\F|u(x)=0\text{ for }m\text{-a.e. }x\in X\setminus F\}.
  \end{align*}
  \begin{Def}[\cite{Ma},\cite{fukushima2012}]
  Given a Dirichlet form $\E$ on a Polish space $X$, an $\E$-nest is an increasing sequence of closed subsets $(F_k)_{k\in\mathbb N}\subset X$ such that $\cup_{k\in\mathbb N}\mathcal F_{F_k}$ is dense in $\F$.\\
  A set $N\subset X$ is $\E$-polar if there is an $\E$-nest $(F_k)_{k\in\mathbb N}$ such that $N\subset X\setminus \cup_{k\in\mathbb N}F_k$. If a property holds  in a complement of an $\E$-polar set we say that it holds $\E$-quasi-everywhere ($\E$-q.e.).\\
  A function $u\colon X\to\mathbb R$ is said to be $\E$-quasi-continuous if there exists an $\E$-nest $(F_k)_{k\in\mathbb N}$ such that every restriction $f_{|F_k}$ is continuous on $F_k$.\\
  
  \noindent
  The Dirichlet form $\E$ is said to be  quasi-regular if the following three properties hold.
  \begin{enumerate}
  \item[i)] There exists an $\E$-nest $(F_k)_{k\in\mathbb N}$ consisting of compact sets.
  \item[ii)] There exists a dense subset of $\F$ whose elements have $\mathcal E$-quasi-continuous representatives.
  \item[iii)] There exists an $\E$-polar set $N\subset X$ and a countable collection of $\E$-quasi-continuous functions $(f_k)_{k\in\mathbb N}\subset \F$ separating the points of $X\setminus N$.
  \end{enumerate}
 
  \end{Def}
  For every $u\in \F$ the quasi-regularity implies that $u$ admits an $\E$-quasi-continuous representative $\tilde u$. The representative is 
  unique $q.e.$ and
  \begin{align}\label{quasi-continuous}
   \text{if }u\in\F\text{ with }|u|\leq C\, m\text{-a.e., then }|\tilde u|\leq C\text{ q.e..}
  \end{align}

  The following Lemma is taken from \cite{savare}.
  \begin{lma}[{\cite[Lemma 2.6]{savare}}]\label{sstuff}
  Let $\E$ be a strongly local, quasi-regular Dirichlet form with linear generator $\Delta$.
   Let $\psi\in L^1(X,m)\cap L^\infty(X,m)$ nonnegative and $\varphi\in L^1(X,m)\cap L^2(X,m)$ such that
   \begin{align*}
    \int_X \psi\Delta g\, dm\geq-\int_X\varphi g\, dm
   \end{align*}
for any nonnegative $g\in \F\cap L^\infty(X,m)$ with $\Delta g\in L^\infty(X,m)$. Then $\psi\in \F$ with
\begin{align*}
 \mathcal E(\psi)\leq\int_X\psi\varphi\, dm, \quad \int\varphi\, dm\geq0,
\end{align*}
and there exists a unique finite Borel measure $\mu:=\mu_+-\varphi m$ with $\mu_+\geq 0$, $\mu_+(X)\leq \int\varphi\, dm$ such that
every $\E$-polar set is $|\mu|$-negligible, the q.c. representative of any function in $\F$ belongs to $L^1(X,|\mu|)$ and
\begin{align*}
 -\mathcal E(\psi,g)=-\int \Gamma(\psi,g)\, dm=\int\tilde g\, d\mu\text{ for every }g\in\F.
\end{align*}

  \end{lma}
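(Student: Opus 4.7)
The plan is to prove the lemma in two main stages. First I would establish that $\psi\in\F$ together with the asserted energy bound, using a resolvent approximation. Second, I would promote the given distributional inequality to a signed‐measure identity via the theory of smooth measures for quasi-regular Dirichlet forms.

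For the first stage, set $g_\lambda:=(I-\lambda\Delta)^{-1}\psi$ for $\lambda>0$. Since $\psi\in L^1\cap L^\infty$ is nonnegative, sub-Markovianity and contractivity of the resolvent give $g_\lambda\in\Dom(\Delta)$, $0\le g_\lambda\le\|\psi\|_\infty$, and $\Delta g_\lambda=\lambda^{-1}(g_\lambda-\psi)\in L^\infty$; so $g_\lambda$ is admissible as a test function. Writing $\psi=g_\lambda-\lambda\Delta g_\lambda$ inside the $L^2$ pairing gives the identity
\[
\int\psi\,\Delta g_\lambda\,dm=-\mathcal E(g_\lambda)-\lambda\|\Delta g_\lambda\|_{L^2}^2,
\]
and combining this with the hypothesis $\int\psi\,\Delta g_\lambda\,dm\ge-\int\varphi g_\lambda\,dm$ yields
\[
\mathcal E(g_\lambda)+\lambda\|\Delta g_\lambda\|_{L^2}^2\le\int\varphi g_\lambda\,dm.
\]
Since $g_\lambda\to\psi$ in $L^2$ as $\lambda\downarrow0$, lower semicontinuity of $\mathcal E$ gives $\psi\in\F$ with $\mathcal E(\psi)\le\int\psi\varphi\,dm$. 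Testing instead with $h_\lambda:=(I-\lambda\Delta)^{-1}\mathbf1$ and passing to the limit (using that $\mathbf1\in\F$ on the finite-measure setting) yields $\int\varphi\,dm\ge0$.

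For the second stage, knowing now that $\psi\in\F$, define
\[
L(g):=-\mathcal E(\psi,g)+\int\varphi g\,dm,\qquad g\in\F.
\]
I would first extend the validity of the hypothesis from $g$ with $\Delta g\in L^\infty$ to arbitrary nonnegative $g\in\F\cap L^\infty$ by the resolvent mollification $g_n:=n(nI-\Delta)^{-1}g$, using $g_n\to g$ in $\F$ together with the uniform $L^\infty$ bound; then to nonnegative $g\in\F$ by the truncation $g\wedge k\uparrow g$ in $\F$ (strong locality and the chain rule for $\Gamma$ handle the energy term). This makes $L$ a positive, continuous linear functional on $\F$ (continuity from Cauchy–Schwarz on both terms). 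The representation theorem for positive continuous functionals on a quasi-regular Dirichlet form (Fukushima–Oshima–Takeda Thm.~2.2.3, or Ma–R\"ockner Thm.~III.2.4) then produces a unique smooth Borel measure $\mu_+$ charging no $\E$-polar set, such that every $g\in\F$ admits a q.c.\ representative in $L^1(\mu_+)$ with $L(g)=\int\tilde g\,d\mu_+$. Setting $\mu:=\mu_+-\varphi m$ rewrites this as $-\mathcal E(\psi,g)=\int\tilde g\,d\mu$, the asserted identity; the mass bound $\mu_+(X)\le\int\varphi\,dm$ follows by inserting $g\equiv1$ (or an $\F$-approximation thereof) and using $\mathcal E(\psi,1)=0$.

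The main obstacle is the second stage: extending $L\ge0$ from the narrow admissible class of test functions to all of $\F_+$, and then invoking the correct representation theorem in the quasi-regular setting. The key technical point is that the hypothesis is only a distributional statement against $g$ with $\Delta g\in L^\infty$, so the resolvent/truncation approximation is indispensable before the Riesz-type representation for smooth measures can be applied; everything else—uniqueness of $\mu_+$, quasi-null behaviour on $\E$-polar sets, and integrability of q.c.\ representatives—is then built into the Dirichlet-form machinery.
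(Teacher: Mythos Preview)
The paper does not give its own proof of this lemma: it is stated as \cite[Lemma~2.6]{savare} and quoted without argument. Your two-stage proof is correct and is essentially the argument found in Savar\'e's paper: a resolvent approximation $g_\lambda=(I-\lambda\Delta)^{-1}\psi$ to obtain $\psi\in\F$ with the energy bound, followed by positivity extension and the Riesz-type representation for smooth measures in the quasi-regular framework. One minor point: your derivation of $\int\varphi\,dm\ge0$ by testing with $(I-\lambda\Delta)^{-1}\mathbf1$ tacitly uses $\mathbf1\in L^2$, i.e.\ finiteness of $m$; this is satisfied in the paper's setting (the reference measure is finite), but in the general $\sigma$-finite case one should instead use an increasing sequence $g_n\in\F\cap L^\infty$ with $\Delta g_n\in L^\infty$ and $g_n\uparrow1$ pointwise, together with $\varphi\in L^1$ and dominated convergence.
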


  We denote by $\Delta^*u$ the measured valued Laplacian, i.e. the signed measure $\mu=\mu_+-\mu_-$ such that
  \begin{align}\label{greatstuff}
   \mathcal E(u,\varphi)=\int\tilde\varphi\, d\mu \text{ for every }\varphi\in\F.
  \end{align}
\subsubsection*{Contraction estimates for the heat flows $P_{t,s}$ and $\hat P_{t,s}$}
Recall that on a family of closed Riemannian manifolds $(M,g_t)$ we obtain the equality
\begin{align*}
 H_t[u](g,h)=\langle\nabla_t^2u\nabla_t g,\nabla_t h\rangle_{g_t}.
\end{align*}
Further note that $|\langle\nabla_t^2u\nabla_t g,\nabla_t h\rangle_{g_t}|\leq |\nabla^2_tu|_{\textrm{HS}}|\nabla_t g||\nabla_t h|$, 
where $|\cdot|_{\textrm{HS}}$ denotes the Hilbert-Schmidt norm.
If the manifold has Ricci curvature bounded from below by some $K\in\mathbb R$ then with $||\cdot||_2=||\cdot||_{L^2}$ and $K_-=\max\{-K,0\}$
\begin{align*}
 |||\nabla_t^2u|_{\textrm{HS}}||_2^2\leq 2||\Delta_tu||_2^2+2K_-\E_t(u),
\end{align*}
where we used the static Bochner inequality and integration by parts.

For each $t\in I$ we define the ``Hessian"
\begin{align*}
 H_t[u](g,h):=\frac12\Big(\Gamma_t(g,\Gamma_t(u,h))+\Gamma_t(h,\Gamma_t(u,g))-\Gamma_t(u,\Gamma_t(g,h))\Big).
\end{align*}
Moreover, we define the distribution valued $\Gamma_2$-operator
\begin{align*}
 \Gamma_{2,t}(u)\colon\mathcal F\cap L^\infty\cap L^1\to\mathbb R
\end{align*}
as in \cite{sturm2016}.
\begin{Def}
 For each $u\in\Dom(\Delta_t)$ such that $u,\Gamma_t(u)\in L^\infty(X,m_t)$ we define
 \begin{align*}
  \Gamma_{2,t}(u)(g)=\int-\frac12\Gamma_t(\Gamma_t(u),g)\, dm_t+\int(g(\Delta_tu)^2+\Gamma_t(g,u)\Delta_tu)\, dm_t,
 \end{align*}
where $g\in\F$ such that $g\in L^1(X,m_t)\cap L^\infty(X,m_t)$.
\end{Def}
Note that thanks to the static RCD$(K,\infty)$-condition the domain of the Laplacian is contained in the domain of the Hessian, i.e. 
$\Dom(\Delta_t)\subset W^{2,2}(X,d_t,m_t)$, and thus $\Gamma_{2,t}(u)(g)\in\mathbb R$ for $u,g$ as above. Indeed, using \cite[Corollary 3.3.9]{gigli2014nonsmooth}, we get
\begin{align*}
\frac14\int|\Gamma_t(\Gamma_t(u),g)|\, dm_t\leq  ||\sqrt{\Gamma_t(u)}||_\infty\sqrt{\E_t(g)}(||\Delta_tu||_2+\sqrt{K_-\E_t(u)}),
\end{align*}
and thus the following estimate holds
\begin{align}\label{estimatehesslaplace}
 | \Gamma_{2,t}(u)(g)|\leq ||g||_\infty||\Delta_tu||_2^2+C||\sqrt{\Gamma_t(u)}||_\infty\sqrt{\E_t(g)}(||\Delta_tu||_2+\sqrt{K_-\E_t(u)}),
\end{align}
cf. Section 5 in \cite{sturm2016}. 
Moreover, each $\E_t=2\Ch_t$ defines a quasi-regular Dirichlet form (\cite[Theorem 4.1]{savare}).

  \begin{Prop}\label{mystuff}
  Let $(X,d_t,m_t)_{t\in I}$ satisfy the regularity assumptions \eqref{assumption3} and \eqref{assumption4}. Assume that the pointwise dynamic Bochner inequality \eqref{Bochnereva} holds at time $t\in I$. Then for every $u\in\Dom(\Delta_t)$ with $u,\Gamma_t(u)\in L^\infty(X,m_t)$
  \begin{enumerate}
 \item[i)] $\Gamma_t(u)\in\mathcal F$ with
 \begin{align*}
  \frac12\E_t(\Gamma_t(u))\leq L&||\Gamma_t(u)||_\infty\E_t(u)+||\Gamma_t(u)||_\infty||\Delta_tu||_2^2\\
  &+C||\Delta_tu||_2\sqrt{||\Gamma_t(u)^2||_\infty(||\Delta_tu||^2_2+K_-\E_t(u))}.
 \end{align*}
\item[ii)] There exists a finite nonnegative Borel measure $\mu_+=\mu_+(t)$ such that every $\E_t$-polar set is $\mu_+$-negligible and for each $g\in\mathcal F$ the $\E_t$-q.c. representative $\tilde g\in L^1(X,\mu_+)$ with
\begin{align*}
 2\Gamma_{2,t}(u)(g)=\int g(\partial_t\Gamma_t)(u)\, dm_t+\int \tilde g\, d\mu_+.
\end{align*}
In particular $\Gamma_{2,t}(u)$ is a finite Borel measure with 
\begin{align*}
 2\Gamma_{2,t}(u)=(\partial_t\Gamma_t)(u)m+\mu_+.
\end{align*}

  \end{enumerate}
  \end{Prop}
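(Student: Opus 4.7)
The approach is to adapt Savar\'e's self-improvement technique \cite{savare} to the time-dependent setting: rewrite the pointwise dynamic Bochner inequality \eqref{Bochnereva} as a distributional lower bound for $\Gamma_t(u)$ tested against $\Delta_t g$, then invoke Lemma \ref{sstuff}, which simultaneously delivers $\Gamma_t(u)\in\F$ of part i) and the measure representation of part ii).

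Rearranging \eqref{Bochnereva} gives
\begin{equation*}
\tfrac12\int\Gamma_t(u)\Delta_t g\, dm_t \geq \tfrac12\int(\partial_t\Gamma_t)(u) g\, dm_t - \int(\Delta_t u)^2 g\, dm_t - \int\Gamma_t(u,g)\Delta_t u\, dm_t
\end{equation*}
for every admissible nonnegative $g$. By Proposition \ref{propdiffbar} combined with $|H_r|\leq L$, one has $|\partial_t\Gamma_t(u)|\leq 2L\Gamma_t(u)\in L^\infty$, so the first two terms on the right are already of the form $-\int\varphi g\, dm_t$ with $\varphi\in L^2$. The cross term is controlled via Cauchy--Schwarz by $\|\sqrt{\Gamma_t(u)}\|_\infty\|\Delta_t u\|_2\sqrt{\E_t(g)}$, introducing a $\sqrt{\E_t(g)}$ factor that does not fit the hypothesis of Lemma \ref{sstuff}; this is the main technical obstacle.

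To clear the obstacle I would follow Savar\'e and test against the resolvent regularization $g_\epsilon=(I-\epsilon\Delta_t)^{-1}g$ rather than $g$ itself. Since $g_\epsilon\in\Dom(\Delta_t)\cap L^\infty$ has Dirichlet energy controlled in terms of $\|g\|_2$ and $\epsilon$, a Young inequality absorbs the $\sqrt{\E_t(g_\epsilon)}$ term and passing $\epsilon\to 0$ produces a clean dual bound $\int\Gamma_t(u)\Delta_t g\, dm_t\geq -\int\varphi g\, dm_t$ for some $\varphi\in L^2(X,m_t)$ controlled schematically by $L\Gamma_t(u)+(\Delta_t u)^2+(\text{quadratic remainder})$. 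The only new ingredient relative to \cite{savare} is the time-derivative contribution, which poses no further difficulty thanks to the $L^\infty$ bound on $\partial_t\Gamma_t(u)$.

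Lemma \ref{sstuff} applied to $\psi=\Gamma_t(u)\in L^1\cap L^\infty$ then yields $\Gamma_t(u)\in\F$ with $\E_t(\Gamma_t(u))\leq\int\Gamma_t(u)\varphi\, dm_t$; tracking constants through $\varphi$ reproduces the three summands in i), the first from the time derivative, the second from $(\Delta_t u)^2$, and the last (with the square root) from the cross term estimated as in \eqref{estimatehesslaplace}. The same lemma also furnishes a finite signed Borel measure $\mu=\mu_+-\varphi m$ with $\mu_+\geq 0$ not charging $\E_t$-polar sets and such that $-\E_t(\Gamma_t(u),g)=\int\tilde g\, d\mu$ for every $g\in\F$. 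With $\Gamma_t(u)\in\F$ now available, integrating by parts in the definition of $\Gamma_{2,t}(u)(g)$ gives
\begin{equation*}
\Gamma_{2,t}(u)(g)=-\tfrac12\E_t(\Gamma_t(u),g)+\int\bigl[g(\Delta_t u)^2+\Gamma_t(g,u)\Delta_t u\bigr]\, dm_t,
\end{equation*}
so \eqref{Bochnereva} becomes $2\Gamma_{2,t}(u)(g)\geq\int g(\partial_t\Gamma_t)(u)\, dm_t$. The left-hand side minus the right-hand side is thus a nonnegative $\F$-continuous linear functional (by \eqref{estimatehesslaplace}), and using the signed measure $\mu$ above to identify the $\E_t$-piece of $\Gamma_{2,t}(u)$ represents it as $\int\tilde g\, d\mu_+$ for a nonnegative Borel measure $\mu_+$ with the claimed polar-null property, giving ii).
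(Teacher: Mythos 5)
You correctly identify the essential obstruction: the cross term $\int\Gamma_t(u,g)\Delta_tu\,dm_t$ in \eqref{Bochnereva} is not of the form $-\int\varphi g\,dm_t$, so Lemma~\ref{sstuff} cannot be applied directly. However, the remedy you propose --- replacing $g$ by the resolvent regularization $g_\varepsilon=(I-\varepsilon\Delta_t)^{-1}g$ and absorbing $\sqrt{\E_t(g_\varepsilon)}$ by Young's inequality --- does not work, and is not in fact Savar\'e's technique. The bound one has is $\E_t(g_\varepsilon)\leq\varepsilon^{-1}\|g\|_2^2$, so $\sqrt{\E_t(g_\varepsilon)}$ blows up as $\varepsilon\to 0$. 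Moreover Young's inequality turns the cross term into a constant plus $\tfrac{\delta}{2}\E_t(g_\varepsilon)$, which is again not of the form $\int\varphi g_\varepsilon\,dm_t$, so the hypothesis of Lemma~\ref{sstuff} (a fixed $\varphi$ independent of $g$) is still not met; and the term $\int\Gamma_t(u)\Delta_t g_\varepsilon\,dm_t=\tfrac{1}{\varepsilon}\int\Gamma_t(u)(g_\varepsilon-g)\,dm_t$ only converges if one already knows $\Gamma_t(u)\in\F$, so the argument is circular.

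The fix that actually works is to regularize $u$, not $g$. The paper sets $u_\varepsilon=h^t_\varepsilon u$, the semigroup mollification, so that $\Delta_t u_\varepsilon\in\Dom(\Delta_t)\cap\Lip_b(X)\subset\F\cap L^\infty$. The Leibniz rule and integration by parts then convert the cross term into
$\int\Gamma_t(u_\varepsilon,g)\Delta_tu_\varepsilon\,dm_t=-\int(\Delta_tu_\varepsilon)^2g\,dm_t-\int\Gamma_t(u_\varepsilon,\Delta_tu_\varepsilon)g\,dm_t$,
which upon substitution into \eqref{Bochnereva} kills the $(\Delta_tu)^2$ term and yields exactly the hypothesis of Lemma~\ref{sstuff} with $\psi=\Gamma_t(u_\varepsilon)$ and $\varphi=-(\partial_t\Gamma_t)(u_\varepsilon)-2\Gamma_t(u_\varepsilon,\Delta_tu_\varepsilon)\in L^1\cap L^2$. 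One then passes $\varepsilon\to 0$, which is the bulk of the remaining work in both parts i) and ii) (the weak-$*$ $L^\infty$, weak $L^2$ and weak $\F$ convergences of $\Gamma_t(u_\varepsilon)$, and for ii) the extension of a positive linear functional from $\F\cap L^\infty$ to $\F$ via Hahn--Banach followed by Savar\'e's measure-representation lemma). Your sketch for ii) is reasonable in outline once i) is in place, but it glosses over this extension step and implicitly assumes the inequality $2\Gamma_{2,t}(u)(g)\geq\int g(\partial_t\Gamma_t)(u)\,dm_t$ holds for $g$ beyond $\Dom(\Delta_t)\cap L^\infty$, which is exactly what the Hahn--Banach argument establishes.
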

  \begin{proof}
  Let $u_\varepsilon=h_\varepsilon^tu$.
Choosing $\psi=\Gamma_t(u_\varepsilon)$ and $\varphi=-(\partial_t\Gamma_t)(u_\varepsilon)-2\Gamma_t(u_\varepsilon,\Delta_tu_\varepsilon)$ in 
Lemma \ref{sstuff} and applying the pointwise dynamic Bochner inequality together with the Leibniz rule yields
\begin{align*}
 \mathcal E_t(\Gamma_t(u_\varepsilon))\leq -\int\Gamma_t(u_\varepsilon)((\partial_t\Gamma_t)(u_\varepsilon)+2\Gamma_t(u_\varepsilon,\Delta_tu_\varepsilon))\, dm_t.
\end{align*}
Applying the Leibniz rule once again we obtain
\begin{align*}
 \mathcal E_t(\Gamma_t(u_\varepsilon))\leq -\int(\Gamma_t(u_\varepsilon)(\partial_t\Gamma_t)(u_\varepsilon)-2(\Delta_tu_\varepsilon)^2\Gamma_t(u_\varepsilon)-2\Gamma_t(u_\varepsilon,\Gamma_t(u_\varepsilon))\Delta_tu_\varepsilon)\, dm_t.
\end{align*}
Note that as $\varepsilon\to0$, $\Gamma(u_\varepsilon)\to \Gamma(u)$ pointwise, in $L^1$ and in the weak$^*$ $L^\infty$ topology. The latter is due to the fact 
that $\Gamma(u_\varepsilon-u)$ is uniformly bounded and converges to $0$ in $L^1$. Moreover by the uniform boundedness of $\Gamma(u_\varepsilon)$ in $L^\infty$
we obtain that $\Gamma(u_\varepsilon)\to \Gamma(u)$ in $L^2$. Hence we find
\begin{align*}
 \E_t(\Gamma_t(u))\leq \liminf_{\varepsilon\to0}\E_t(\Gamma_t(u_\varepsilon))
 \end{align*}
 and by Proposition \ref{propdiffbar}
 \begin{align*}
 &\int\Gamma_t(u)(\partial_t\Gamma_t)(u)\, dm_t=\int\Gamma_t(u)^2e^{H_t}\, dm_t\\
 &=\lim_{\varepsilon\to0}\int\Gamma_t(u_\varepsilon)^2e^{H_t}\, dm_t=\lim_{\varepsilon\to0}\int\Gamma_t(u_\varepsilon)(\partial_t\Gamma_t)(u_\varepsilon)\, dm_t,\\
\end{align*}
while
\begin{align*}
\int(\Delta_tu)^2\Gamma_t(u)\, dm_t=\lim_{\varepsilon\to0}\int(h_\varepsilon^t\Delta_tu)^2\Gamma_t(u_\varepsilon)\, dm_t=\lim_{\varepsilon\to0}\int(\Delta_tu_\varepsilon)^2\Gamma_t(u_\varepsilon)\, dm_t.
 \end{align*}
 In order to show that 
 \begin{align*}
  \lim_{\varepsilon\to0}\int\Gamma_t(u_\varepsilon,\Gamma_t(u_\varepsilon))\Delta_tu_\varepsilon\, dm_t=\int\Gamma_t(u,\Gamma_t(u))\Delta_tu\, dm_t,
 \end{align*}
we show that $\Gamma_t(u_\varepsilon,\Gamma_t(u_\varepsilon))$ weakly converges to $\Gamma_t(u,\Gamma_t(u))$ in $L^2$. Take a sufficiently smooth testfunction
$\varphi$ ($\varphi\in\F\cap L^\infty$), then we easily deduce
\begin{align*}
 \int\Gamma_t(u_\varepsilon,\Gamma_t(u_\varepsilon))\varphi\, dm_t=-\int\Delta_tu_\varepsilon\Gamma_t(u_\varepsilon)\varphi\, dm_t-\int\Gamma_t(u_\varepsilon,\varphi)\Gamma_t(u_\varepsilon)\, dm_t\\
 \to -\int\Delta_tu\Gamma_t(u)\varphi\, dm_t-\int\Gamma_t(u,\varphi)\Gamma_t(u)\, dm_t
\end{align*}
by the strong $L^2$ convergence of $\Delta_tu_\varepsilon$, the weak$^*$-$L^\infty$ convergence of $\Gamma(u_\varepsilon)$ and the $L^1$ convergence of $\Gamma(u_\varepsilon,\varphi)$.
Moreover $||\Gamma_t(u_\varepsilon,\Gamma_t(u_\varepsilon))||_2$ is uniformly bounded in $\varepsilon$ since
\begin{align*}
 \int|\Gamma_t(u_\varepsilon,\Gamma_t(u_\varepsilon))|^2\, dm_t\leq 4||\Gamma_t(u_\varepsilon)^2||_\infty (||\Delta_tu_\varepsilon||^2_2+K_-\E_t(u_\varepsilon))\\
 \leq C||\Gamma_t(u)^2||_\infty||(||\Delta_tu||^2_2+K_-\E_t(u))
\end{align*}
where we used \cite[Corollary 3.3.9]{gigli2014nonsmooth}. Consequently we obtain that
$\Gamma_t(u_\varepsilon,\Gamma_t(u_\varepsilon))$ weakly converges to $\Gamma_t(u,\Gamma_t(u))$ in $L^2$ since $\F\cap L^\infty$ is dense in $L^2$ \cite[Theorem 4.5]{agscalc}.

We conclude 
\begin{align*}
 \frac12\E_t(\Gamma_t(u))\leq-\int\frac12\Gamma_t(u)(\partial_t\Gamma_t)(u)-\Gamma_t(u)(\Delta_tu)^2-\Gamma_t(u,\Gamma_t(u))\Delta_tu\, dm_t\\
 \leq L||\Gamma_t(u)||_\infty\E_t(u)+||\Gamma_t(u)||_\infty||\Delta_tu||_2^2+C||\Delta_tu||_2\sqrt{||\Gamma_t(u)^2||_\infty(||\Delta_tu||^2_2+K_-\E_t(u))}.
\end{align*}

We show the second claim again by using the semigroup mollification $u_\varepsilon:=h^t_\varepsilon u$. By Lemma \ref{sstuff} we deduce that
\begin{align*}
\int g\, d\Delta_t^*\Gamma_t(u_\varepsilon)-\int \tilde g2\Gamma_t(u_\varepsilon,\Delta_tu_\varepsilon)\, dm_t\\
 =\int \tilde g\, d\mu_+(u_\varepsilon)+\int \tilde g(\partial_t\Gamma_t)(u_\varepsilon)\, dm_t,
\end{align*}
where $\Delta_t^*$ is the measure valued Laplacian, and $\mu_+(u_\varepsilon)$ the nonnegative Borel measure with 
$\mu_+(u_\varepsilon)(X)\leq \int(\Delta_tu_\varepsilon)^2+\frac12(\partial_t\Gamma_t)(u_\varepsilon)\, dm_t$. Hence, since $g=\tilde g$ q.e.
\begin{align*}
 &\int g\, d\mu_+(u_\varepsilon)\\
 =&\int-\Gamma_t(\Gamma_t(u_\varepsilon),g)\, dm_t+\int 2 g(\Delta_tu_\varepsilon)^2+2\Gamma_t( g,u_\varepsilon)(\Delta_tu_\varepsilon)\, dm_t-\int g(\partial_t\Gamma_t)(u_\varepsilon)\, dm_t.
\end{align*}
Note that the right hand side converges as $\varepsilon\to0$ since $\Gamma(u_\varepsilon)\to\Gamma(u)$ weakly in $\F$. Indeed, take a test function $\varphi\in\Dom(\Delta_t)$.
Then 
\begin{align*}
 \lim_{\varepsilon\to0}\int\Gamma_t(\Gamma_t(u_\varepsilon),\varphi)\, dm_t=-\lim_{\varepsilon\to0}\int\Gamma_t(u_\varepsilon)\Delta_t\varphi\, dm_t=\int\Gamma_t(\Gamma_t(u),\varphi)\, dm_t.
\end{align*}
Since $\E_t(\Gamma_t(u_\varepsilon))$ is uniformly bounded in $\varepsilon$ by the first claim and $\Dom(\Delta_t)$ is dense in $\F$ we deduce that
\begin{align*}
 \lim_{\varepsilon\to0}\int\Gamma_t(\Gamma_t(u_\varepsilon),g)\, dm_t=\int\Gamma_t(\Gamma_t(u),g)\, dm_t\qquad \forall \, g\in\F.
\end{align*}

Define the linear functional $\tilde\mu_+(u)\colon \F\cap L^\infty\to\mathbb R$ by
\begin{align*}
 \tilde\mu_+(u)(g):=\lim_{\varepsilon\to0} \int g\, d\mu_+(u_\varepsilon).
\end{align*}
Note that if $g\geq 0$ we have $\tilde\mu_+(u)(g)\geq0$ by the pointwise dynamic Bochner inequality. The Hahn-Banach theorem implies that there exists a linear functional
$M\colon \F\to\mathbb R$ such that $M(g)=\mu_+(u)(g)$ for all $g\in \F\cap L^\infty$ and $M(g)\geq 0$ for all $g\in\mathcal F$ such that $g\geq0$ a.e..
Moreover, if $g\in\F$ with $g\leq 1$ $m$-a.e.
\begin{align*}
M(g)=\mu_+(u)(g)=\lim_{\varepsilon\to0}\int g\, d\mu_+(u_\varepsilon)\leq \mu_+(u_\varepsilon)(X)\leq\int(\Delta_t u)^2+C\Gamma_t(u)\, dm_t.
\end{align*}

Thus by Proposition 2.5 in \cite{savare} there exists a unique finite and nonnegative Borel measure $\mu_+$ in $X$ such that every $\E_t$-polar set is $\mu_+$-negligible and
for each $g\in\mathcal F$ the $\E_t$-q.c. representative $\tilde g\in L^1(X,\mu_+)$ with
\begin{align*}
 M(g)=\int\tilde g\, d\mu_+.
\end{align*}
Consequently 
\begin{align*}
 2\Gamma_{2,t}(u)(g)=\int g(\partial_t\Gamma_t)(u)\, dm_t+\int \tilde g\, d\mu_+,
\end{align*}
and hence $\Gamma_{2,t}$ is measure valued with $2\Gamma_{2,t}(u)=(\partial_t\Gamma_t)(u)\, m_t+ \, \mu_+$.
\end{proof}

By virtue of Lebesgue's decomposition theorem we denote by $\gamma_{2,t}(u)\in L^1(X,m_t)$ the density wrt $m_t$ 
\begin{align*}
 \Gamma_{2,t}(u)=\gamma_{2,t}(u)m_t+\Gamma_{2,t}^\perp(u),\quad \Gamma_{2,t}^\perp(u)\perp m_t,
\end{align*}
and thus by Proposition \ref{mystuff}
\begin{align}\label{mystuff2}
 \gamma_{2,t}(u)\geq \frac12(\partial_t\Gamma_t)(u)\, \,  m\text{-a.e. and }\Gamma_{2,t}^\perp(u)\geq0.
\end{align}

We define for $u,h\in\Dom(\Delta_t)$ such that $\Gamma_t(u),\Gamma_t(h)\in L^\infty(X,m_t)$
\begin{align*}
\Gamma_{2,t}(u,h)(g):=\frac14\Gamma_{2,t}(u+h)(g)-\frac14\Gamma_{2,t}(u-h)(g),
\end{align*}
where $g\in\F\cap L^\infty$. Note that the right-hand side is well-defined by \eqref{estimatehesslaplace} and
\begin{align*}
\Gamma_{2,t}(u,h)(g)=\int-\frac12\Gamma_t(g,\Gamma_t(u,h))+g\Delta_tu\Delta_th+\frac12\Delta_th\Gamma_t(u,g)+\frac12\Delta_tu\Gamma_t(h,g)\, dm_t
\end{align*}
Similarly,
\begin{align*}
\gamma_{2,t}(u,h):=\frac14\gamma_{2,t}(u+h)-\frac14\gamma_{2,t}(u-h).
\end{align*}
The following Lemma is an adaptation of Lemma 3.3 in \cite{savare}.
\begin{lma}\label{fundamental}
Let  $\bar u=(u_i)_{i=1}^n$ with $u_i\in\Dom(\Delta_t)$ such that $u,\Gamma_t(u)\in L^\infty(X,m_t)$ and let $\Psi\in\mathcal C^3(\mathbb R^n)$ with $\Psi(0)=0$. Then
\begin{align*}
\Gamma_{2,t}(\Psi(\bar u))=&\sum_{i,j}\Gamma_{2,t}(u_i,u_j)(\partial_i\Psi)({\bar u})(\partial_j\Psi)({\bar u})\\
+&2\sum_{i,j,k}(\partial_i\Psi)(\bar u)(\partial_{jk}\Psi)(\bar u)H_t[u_i](u_j,u_k)\, m_t\\
+&\sum_{i,j,k,h}(\partial_{ik}\Psi)(\bar u)(\partial_{jh}\Psi)(\bar u)\Gamma_t(u_i,u_j)\Gamma_t(u_k,u_h)\, m_t.
\end{align*}
In particular $m_t$-a.e.
\begin{align*}
\gamma_{2,t}(\Psi(\bar u))=&\sum_{i,j}\gamma_{2,t}(u_i,u_j)(\partial_i\Psi)(\bar u)(\partial_j\Psi)(\bar u)\\
+&2\sum_{i,j,k}(\partial_i\Psi)(\bar u)(\partial_{jk}\Psi)(\bar u)H_t[u_i](u_j,u_k)\\
+&\sum_{i,j,k,h}(\partial_{ik}\Psi)(\bar u)(\partial_{jh}\Psi)(\bar u)\Gamma_t(u_i,u_j)\Gamma_t(u_k,u_h).
\end{align*}
\end{lma}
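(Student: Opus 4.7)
The plan is to prove the measure identity for $\Gamma_{2,t}(\Psi(\bar u))$ by testing against an arbitrary $g \in \mathcal F \cap L^\infty$ and expanding both sides using the chain and Leibniz rules for $\Gamma_t$ and $\Delta_t$. Since everything happens at the fixed time $t$, there is no genuine time-dependence issue: the formula is purely algebraic in nature, and the novelty compared to Savaré's static version lies only in keeping track of the fact that $\Gamma_{2,t}(u_i,u_j)$ is defined as a functional on test functions via the polarization already given in the excerpt, namely
\begin{align*}
\Gamma_{2,t}(u,h)(g)=\int \Bigl[-\tfrac12\Gamma_t(g,\Gamma_t(u,h))+g\Delta_t u\Delta_t h+\tfrac12\Delta_t h\,\Gamma_t(u,g)+\tfrac12\Delta_t u\,\Gamma_t(h,g)\Bigr]dm_t.
\end{align*}

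First I would record the three fundamental chain rules that will feed into the expansion: $\Gamma_t(\Psi(\bar u),v)=\sum_i(\partial_i\Psi)(\bar u)\,\Gamma_t(u_i,v)$, the quadratic chain rule $\Gamma_t(\Psi(\bar u))=\sum_{i,j}(\partial_i\Psi)(\bar u)(\partial_j\Psi)(\bar u)\Gamma_t(u_i,u_j)$, and the Laplacian chain rule $\Delta_t\Psi(\bar u)=\sum_i(\partial_i\Psi)(\bar u)\Delta_t u_i+\sum_{i,j}(\partial_{ij}\Psi)(\bar u)\Gamma_t(u_i,u_j)$. The hypotheses $u_i\in\Dom(\Delta_t)$ and $\Gamma_t(u_i)\in L^\infty$, together with the bound \eqref{estimatehesslaplace} and the RCD$(K,N)$ calculus from \cite{gigli2014nonsmooth}, give enough regularity for each of these identities and for the finiteness of every integral that appears below.

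Next I substitute these three expansions into the defining formula
\begin{align*}
\Gamma_{2,t}(\Psi(\bar u))(g)=-\tfrac12\!\int\Gamma_t(\Gamma_t(\Psi(\bar u)),g)\,dm_t+\!\int g(\Delta_t\Psi(\bar u))^2dm_t+\!\int\Gamma_t(g,\Psi(\bar u))\Delta_t\Psi(\bar u)\,dm_t.
\end{align*}
Expanding produces three families of terms. Terms in which one $\Gamma_t$ acts on a product $(\partial_i\Psi)(\bar u)(\partial_j\Psi)(\bar u)$ yield, via the Leibniz rule and collection, precisely the summand $\sum_{i,j}\Gamma_{2,t}(u_i,u_j)$ tested against $(\partial_i\Psi)(\bar u)(\partial_j\Psi)(\bar u)g$. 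Terms where $\Gamma_t$ has to open a bracket of the form $\Gamma_t(u_j,u_k)$ produce expressions of the shape $(\partial_i\Psi)(\bar u)(\partial_{jk}\Psi)(\bar u)\,\Gamma_t(u_i,\Gamma_t(u_j,u_k))$; symmetrizing these in the three indices using the polarization identity $H_t[u_i](u_j,u_k)=\tfrac12(\Gamma_t(u_j,\Gamma_t(u_i,u_k))+\Gamma_t(u_k,\Gamma_t(u_i,u_j))-\Gamma_t(u_i,\Gamma_t(u_j,u_k)))$ gives the Hessian line $2\sum_{i,j,k}(\partial_i\Psi)(\bar u)(\partial_{jk}\Psi)(\bar u)H_t[u_i](u_j,u_k)$. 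Finally, the terms coming from the square of the second-order part of $\Delta_t\Psi(\bar u)$ and its cross terms with the first-order part contribute the absolutely continuous quartic piece $\sum_{i,j,k,h}(\partial_{ik}\Psi)(\bar u)(\partial_{jh}\Psi)(\bar u)\Gamma_t(u_i,u_j)\Gamma_t(u_k,u_h)$.

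The main obstacle is the bookkeeping in the symmetrization step: the raw expansion produces only the asymmetric combination $\Gamma_t(u_i,\Gamma_t(u_j,u_k))$, and one must carefully relabel summation indices using the symmetry $(\partial_{jk}\Psi)=(\partial_{kj}\Psi)$ (and the boundary terms arising when one freely integrates $\Gamma_t$ against $g$) to produce the fully symmetric object $H_t[u_i](u_j,u_k)$. Once the measure identity is established on test functions $g\in\mathcal F\cap L^\infty$, the absolutely continuous part is identified by Lebesgue decomposition, giving the pointwise $m_t$-a.e.\ formula for $\gamma_{2,t}(\Psi(\bar u))$ claimed in the statement; the singular part plays no role since all the correction terms on the right hand side are absolutely continuous with respect to $m_t$, so $\Gamma_{2,t}^\perp(\Psi(\bar u))=\sum_{i,j}(\partial_i\Psi)(\bar u)(\partial_j\Psi)(\bar u)\Gamma_{2,t}^\perp(u_i,u_j)$ automatically.
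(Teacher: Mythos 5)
Your proposal follows essentially the same route as the paper: insert the chain rules for $\Gamma_t(\Psi(\bar u))$ and $\Delta_t(\Psi(\bar u))$ into the defining formula of $\Gamma_{2,t}(\Psi(\bar u))(g)$ for $g\in\F\cap L^\infty$, use the Leibniz rule to regroup into the polarized terms $\Gamma_{2,t}(u_i,u_j)(g\psi_i\psi_j)$, the symmetrized Hessian combination giving $H_t[u_i](u_j,u_k)$, and the quartic $\Gamma_t\cdot\Gamma_t$ terms, and then identify the absolutely continuous parts. The only detail you leave implicit is the paper's final truncation/dominated-convergence step extending the tested identity from $g\in\F\cap L^\infty$ to all $g\in\F$ (using $\tilde g\in L^1(X,\mu_+)$), which is routine and does not affect the correctness of your argument.
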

\begin{proof}
Note that $\Psi(\bar u)\in\Dom (\Delta_t)$ with $\Gamma_t(u)\in L^\infty$ since
\begin{align*}
\Gamma_t(\Psi(\bar u))=&\sum_{i,j}\partial_i\Psi(\bar u)\partial_j\Psi(\bar u)\Gamma_t(u_i,u_j)\in L^1\cap L^\infty,\\
\Delta_t(\Psi(\bar u))=&\sum_i\partial_i\Psi(\bar u)\Delta_tu_i+\sum_{i,j}\partial_{ij}\Psi(\bar u)\Gamma_t(u_i,u_j)\in L^2.
\end{align*}
 Thus by definition for each $g\in\F\cap L^\infty$
 \begin{align*}
 2\Gamma_{2,t}(\Psi(\bar u))(g)=\int-\Gamma_t(g,\Gamma_t(\Psi(\bar u)))+2g(\Delta_t\Psi(\bar u))^2+2\Gamma(g,\Psi(\bar u))\Delta_t\Psi(\bar u)\, dm_t.
 \end{align*} 
We calculate using the notation $\psi=\Psi(\bar u),$ $\psi_i=\partial_i\Psi(\bar u)$ and $\psi_{ij}=\partial_{ij}\Psi(\bar u)$ for the first term
 \begin{align*}
 &\int-\Gamma_t(g,\Gamma_t(\Psi(\bar u)))\, dm_t\\
 =&\sum_{i,j}\Big\{\int-\Gamma_t(g\psi_i\psi_j,\Gamma_t(u_i,u_j))\, dm\\
 &\qquad+\int g\Big(\Gamma_t(u_i,u_j)\Delta_t(\psi_i\psi_j)+2\Gamma_t(\psi_i\psi_j,\Gamma_t(u_i,u_j))\Big)\, dm_t\Big\}\\
 = &\sum_{i,j}\int-\Gamma_t(g\psi_i\psi_j,\Gamma_t(u_i,u_j))\, dm+\int 2g\Big(I+II\Big)\, dm_t,
 \end{align*}
 where
 \begin{align*}
 I=\sum_{i,j,k,h}\Gamma_t(u_i,u_j)\Big(\psi_i(\psi_{jk}\Delta_tu_k+\psi_{jkh}\Gamma_t(u_k,u_h))+\psi_{ik}\psi_{jh}\Gamma_t(u_k,u_h)\Big)
 \end{align*}
 and
 \begin{align*}
 II=\sum_{i,j,k}\psi_i\psi_{jk}\Big(\Gamma_t(u_k,\Gamma_t(u_j,u_i))+\Gamma_t(u_j,\Gamma_t(u_i,u_k))\Big).
 \end{align*}
On the other hand
 \begin{align*}
&\int 2g(\Delta_t\Psi(\bar u))^2+2\Gamma_t(g,\Psi(\bar u))\Delta_t\Psi(\bar u)\, dm_t\\
=&\sum_{i,j}2\int\Big(\Delta_tu_i\Delta_tu_jg\psi_i\psi_j+\Gamma_t(u_i,g\psi_i\psi_j)\Delta_tu_j\Big)\, dm_t\\
-&\sum_{i,j,k,h}\int 2g\Big(\psi_i\Delta_tu_k\psi_{kj}\Gamma_t(u_i,u_j)+\psi_i\Gamma_t(u_k,u_h)\psi_{khj}\Gamma_t(u_i,u_j)\\
&\qquad\qquad\qquad+\psi_i\psi_{jk}\Gamma_t(u_i,\Gamma_t(u_j,u_k))\Big)\, dm_t.
 \end{align*}
 Adding up and collecting terms yields
 \begin{align*}
  &2\Gamma_{2,t}(\Psi(\bar u))(g)\\
  =
 &\sum_{i,j}\int\Big(-\Gamma_t(g\psi_i\psi_j,\Gamma_t(u_i,u_j))+2g\psi_i\psi_j\Delta_tu_i\Delta_tu_j+2\Gamma_t(u_i,g\psi_i\psi_j)\Delta_tu_j\Big)dm_t\\
+ &\sum_{i,j,k}\int2g\psi_i\psi_{jk}\Big(\Gamma_t(u_k,\Gamma_t(u_i,u_j))+\Gamma_t(u_j,\Gamma_t(u_i,u_k))-\Gamma_t(u_i,\Gamma_t(u_j,u_k))\Big)dm_t\\
+&\sum_{i,j,k,h}\int 2g\psi_{ik}\psi_{jk}\Gamma(u_k,u_h)\Gamma(u_i,u_j)\, dm_t\\
=&2\sum_{i,j}\Gamma_{2,t}(u_i,u_j)( g \psi_i\psi_j)
+ \sum_{i,j,k}\int4g\psi_i\psi_{jk}(H_t[u_i](u_k,u_j))\, dm_t\\
&\quad+\sum_{i,j,k,h}\int 2g\psi_{ik}\psi_{jk}\Gamma_t(u_k,u_h)\Gamma_t(u_i,u_j)\, dm_t
 \end{align*}
 for each $g\in\F\cap L^\infty$.
 
 For arbitrary $g\in\F$, set $g^n:=g\wedge n$. Then, by dominated convergence (recall that $\tilde g\in L^1(X,\mu_+)$)
 \begin{align*}
  \lim_{n\to\infty}&\int g^n\, d\Gamma_{2,t}(\Psi(\bar u))
  =\lim_{n\to\infty}\Big(\int g^n(\partial_t\Gamma_t)(\Psi(\bar u))\, dm_t+\int \tilde{g^n}\, d\mu_+\Big)\\
  &=\int g\, d\Gamma_{2,t}(\Psi(\bar u)).
 \end{align*}
Similarly we can pass to the limit for the other integrals and obtain for all $g\in\F$
 \begin{align*}
  &2\Gamma_{2,t}(\Psi(\bar u))(g)
  =2\sum_{i,j}\Gamma_{2,t}(u_i,u_j)( g \psi_i\psi_j)
+ \sum_{i,j,k}\int4g\psi_i\psi_{jk}(H_t[u_i](u_k,u_j))\, dm_t\\
&\quad+\sum_{i,j,k,h}\int 2g\psi_{ik}\psi_{jk}\Gamma_t(u_k,u_h)\Gamma_t(u_i,u_j)\, dm_t,
 \end{align*}
 and hence the result.
\end{proof}

  \begin{Prop}\label{gammaestimate}
  Let $(X,d_t,m_t)_{t\in I}$ satisfy the regularity assumptions \eqref{assumption3} and \eqref{assumption4}. Assume that the pointwise dynamic Bochner inequality \eqref{Bochnereva} holds at time $t\in I$.
 Then for every $u\in\Dom(\Delta_t)\cap L^\infty(X,m_t)$ such that $\Gamma_t(u)\in L^\infty(X,m_t)$
   \begin{align*}
    \Gamma_t(\Gamma_t(u))\leq 4\big(\gamma_{2,t}(u)-\frac12\partial_t\Gamma_t(u)\big)\Gamma_t(u).
   \end{align*} 

  \end{Prop}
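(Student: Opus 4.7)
The argument adapts Savaré's self-improvement strategy for the Bakry--Émery inequality to the time-dependent setting; the only essentially new ingredient is the treatment of the $\partial_t\Gamma_t$-term coming from the dynamic Bochner inequality of Proposition \ref{mystuff}. The idea is to apply the pointwise Bochner inequality to $\Psi(u,h)$ for a two-variable $\mathcal{C}^3$ function $\Psi$ and an auxiliary test function $h$, and then vary $\Psi$ freely at each point.

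Fix $h\in\Dom(\Delta_t)\cap L^\infty(X,m_t)$ with $\Gamma_t(h)\in L^\infty(X,m_t)$ and a $\mathcal{C}^3$ function $\Psi\colon\mathbb{R}^2\to\mathbb{R}$ with $\Psi(0,0)=0$. Since $\Psi(u,h)$ inherits the regularity assumed on $u$ and $h$, Proposition \ref{mystuff} yields the pointwise estimate $\gamma_{2,t}(\Psi(u,h))\geq\tfrac12\partial_t\Gamma_t(\Psi(u,h))$ $m_t$-a.e.; expanding the left-hand side by Lemma \ref{fundamental} with $\bar u=(u,h)$ and the right-hand side by the $t$-chain rule $\partial_t\Gamma_t(\Psi(u,h))=\sum_{i,j}\Psi_i\Psi_j(\partial_t\Gamma_t)(u_i,u_j)$ (justified by Proposition \ref{propdiffbar} and \eqref{assumption4}) rewrites it as a pointwise polynomial inequality in the partial derivatives of $\Psi$ at $(u(x),h(x))$. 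These derivatives can be freely prescribed at a fixed point, so I specialise to $\Psi_2=\Psi_{11}=\Psi_{22}=0$ and $\Psi_1=a$, $\Psi_{12}=c$, obtaining $m_t$-a.e. and for all $(a,c)\in\mathbb{R}^2$
\begin{align*}
a^{2}\bigl(\gamma_{2,t}(u)-\tfrac12\partial_t\Gamma_t(u)\bigr)+4ac\,H_t[u](u,h)+2c^{2}\bigl(\Gamma_t(u)\Gamma_t(h)+\Gamma_t(u,h)^{2}\bigr)\geq 0.
\end{align*}
The nonpositivity of the discriminant of this quadratic form gives
\begin{align*}
H_t[u](u,h)^{2}\leq\tfrac12\bigl(\Gamma_t(u)\Gamma_t(h)+\Gamma_t(u,h)^{2}\bigr)\bigl(\gamma_{2,t}(u)-\tfrac12\partial_t\Gamma_t(u)\bigr).
\end{align*}

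Invoking the Cauchy--Schwarz inequality $\Gamma_t(u,h)^{2}\leq\Gamma_t(u)\Gamma_t(h)$ and the identity $2H_t[u](u,h)=\Gamma_t(\Gamma_t(u),h)$, which is immediate from the definition of $H_t$, the previous bound upgrades to
\begin{align*}
\Gamma_t(\Gamma_t(u),h)^{2}\leq 4\,\Gamma_t(u)\Gamma_t(h)\bigl(\gamma_{2,t}(u)-\tfrac12\partial_t\Gamma_t(u)\bigr).
\end{align*}
By Proposition \ref{mystuff}(i), $\Gamma_t(u)\in\F$, so the forms $h\mapsto\Gamma_t(\Gamma_t(u),h)$ and $h\mapsto\Gamma_t(h)$ extend continuously to $\F$; since the class of admissible $h$ above is dense in $\F$, we may approximate $\Gamma_t(u)$ by such a sequence $h_n$ and pass to the limit to obtain
\begin{align*}
\Gamma_t(\Gamma_t(u))^{2}\leq 4\,\Gamma_t(u)\Gamma_t(\Gamma_t(u))\bigl(\gamma_{2,t}(u)-\tfrac12\partial_t\Gamma_t(u)\bigr).
\end{align*}
Dividing by $\Gamma_t(\Gamma_t(u))$ on the set where it is positive (the complement being immediate from $\gamma_{2,t}(u)\geq\tfrac12\partial_t\Gamma_t(u)$ given in Proposition \ref{mystuff}(ii)) yields the claim.

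The main obstacle is the regularity bookkeeping in the first step: verifying that $\Psi(u,h)$ satisfies the hypotheses of Lemma \ref{fundamental} and of Proposition \ref{mystuff}, and ensuring that the quadratic-form inequality holds pointwise for \emph{every} $(a,c)\in\mathbb{R}^2$ on a single $m_t$-full-measure set. Both issues are handled exactly as in the proof of Proposition \ref{mystuff}: first replace $u$ and $h$ by their semigroup mollifications $u_\varepsilon=h^t_\varepsilon u$, $h_\varepsilon=h^t_\varepsilon h$, choose $\Psi$ from a countable dense family of polynomials so that the almost-everywhere inequalities can be intersected on a single full-measure set, and then pass to the limit $\varepsilon\to 0$ using the weak convergence of $\Gamma_t(u_\varepsilon,\Gamma_t(u_\varepsilon))$ and the uniform $L^\infty$-bounds on $\Gamma_t(u_\varepsilon)$, $\Gamma_t(h_\varepsilon)$ established there.
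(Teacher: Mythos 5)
Your proof is correct and follows essentially the same Savaré-type self-improvement strategy as the paper, but with a minor streamlining worth noting. The paper uses a three-variable polynomial $\Psi(\bar u)=\lambda u_1+(u_2-a)(u_3-b)-ab$, derives $H_t[u_1](u_2,u_3)^2\leq(\gamma_{2,t}(u_1)-\tfrac12\partial_t\Gamma_t(u_1))\Gamma_t(u_2)\Gamma_t(u_3)$, and then needs the polarization identity $H_t[u_1](u_2,u_3)+H_t[u_2](u_1,u_3)=\Gamma_t(\Gamma_t(u_1,u_2),u_3)$ to pass to $\Gamma_t(\Gamma_t(u_1,u_2),u_3)$ before setting $u_1=u_2=u$. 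You collapse to a two-variable polynomial in $(u,h)$ from the outset, which forces $u_1=u_2=u$ immediately; the payoff is that $H_t[u](u,h)=\tfrac12\Gamma_t(\Gamma_t(u),h)$ is an exact identity (the first and third terms in the definition of $H_t$ cancel), so the polarization step and the intermediate triangle-type estimate are unnecessary. Both routes end with the same approximation of $h$ (resp.\ $u_3$) by $\Gamma_t(u)$ in energy and the division by $\Gamma_t(\Gamma_t(u))$, handling the zero set via $\gamma_{2,t}(u)\geq\tfrac12\partial_t\Gamma_t(u)$. Your remark about extending the forms in $h$ "continuously to $\F$" should be read as $\F$-convergence implying $L^1$-convergence and hence a.e.\ convergence along a subsequence, which is how the paper invokes Savaré's Theorem 3.4 at the same step.
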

  
\begin{proof}
We choose the same polynomial $\Psi\colon \mathbb R^3\to \mathbb R$ as in \cite{savare} by
\begin{align*}
 \Psi(\bar u):=\lambda u_1+(u_2-a)(u_3-b)-ab, \qquad \lambda, a, b\in\mathbb R,
\end{align*}
where $\bar u=(u_1,u_2,u_3)$, where each $u_i\in\Dom(\Delta_t)\cap L^\infty(X,m_t)$ with $\Gamma_t(u_i)\in L^\infty(X,m_t)$.
We apply Proposition \ref{mystuff} and obtain
\begin{align}\label{poly}
 \gamma_{2,t}(\Psi(\bar u))\geq\frac12(\partial_t\Gamma_t)(\Psi(\bar u))\quad m\text{-a.e. in } X,
\end{align}
where both sides of the inequality depend on $\lambda,a,b\in\mathbb R$. Choosing $\lambda,a,b$ in a dense and countable subset $D$ of $\mathbb R$ yields that
\eqref{poly} holds $m$-a.e. for all $\lambda, a, b$ in $D$. Since
\begin{align*}
 (\partial_t\Gamma_t)(\Psi(\bar u))=\sum_{i,j}\partial_i\Psi(\bar u)\partial_j\Psi(\bar u)(\partial_t\Gamma_t)(u_i,u_j),
\end{align*}
and
\begin{align*}
 \gamma_{2,t}(\Psi(\bar u))=&\sum_{i,j}\partial_i\Psi(\bar u)\partial_j\Psi(\bar u)\gamma_{2,t}(u_i,u_j)
 +2\sum_{i,j,k}\partial_i\Psi(\bar u)\partial_{jk}\Psi(\bar u)H_t[u_i](u_j,u_k)\\
 +&\sum_{i,j,k,h}\partial_{ik}\Psi(\bar u)\partial_{jh}\Psi(\bar u)\Gamma_t(u_i,u_j)\Gamma_t(u_k,u_h),
\end{align*}
cf. \cite[Lemma 3.3]{savare},
both sides are continuous in $\lambda,a,b$, and hence we conclude that \eqref{poly} holds for all $\lambda, a, b$ in 
$\mathbb R$.

Thus, for $m$-a.e. $x\in X$ we may set $a:=u_2(x)$, $b:=u_3(x)$ so that 
\begin{align*}
 \partial_1\Psi(\bar u)(x)=\lambda,\quad \partial_2\Psi(\bar u)(x)=0=\partial_3\Psi(\bar u)(x)\\
 \partial_{23}\Psi(\bar u)(x)=1=\partial_{32}\Psi(\bar u)(x),\quad \partial_{ij}\Psi(\bar u)(x)=0\text{ else,}
\end{align*}
$m$-a.e., and exploiting \eqref{poly} yields
\begin{align*}
 \lambda^2\gamma_{2,t}(u_1)
 +4\lambda H_t[u_1](u_2,u_3)
 +2\Big(\Gamma_t(u_2,u_3)^2+\Gamma_t(u_2)\Gamma_t(u_3)\Big)\geq\frac12\lambda^2(\partial_t\Gamma_t)(u_1).
\end{align*}
Using Cauchy-Schwartz inequality $\Gamma_t(u_2,u_3)^2\leq \Gamma_t(u_2)\Gamma_t(u_3)$ this can be transformed into
\begin{align*}
 \lambda^2\Big(\gamma_{2,t}(u_1)-\frac12(\partial_t\Gamma_t)(u_1)\Big)
 +4\lambda H_t[u_1](u_2,u_3)
 +4\Gamma_t(u_2)\Gamma_t(u_3)\geq0,
\end{align*}
and since $\lambda$ is arbitrary \cite[Lemma 3.3.6]{gigli2014nonsmooth} we obtain
\begin{align*}
 (H_t[u_1](u_2,u_3))^2\leq\Big(\gamma_{2,t}(u_1)-\frac12(\partial_t\Gamma_t)(u_1)\Big)\Gamma_t(u_2)\Gamma_t(u_3).
\end{align*}
From the definition of the Hessian we deduce that
\begin{align*}
 H_t[u_1](u_2,u_3)+H_t[u_2](u_1,u_3)=\Gamma_t(\Gamma_t(u_1,u_2),u_3)
\end{align*}
and consequently
\begin{equation}
\begin{aligned}\label{u1u2u3}
 |\Gamma_t(\Gamma_t(u_1,u_2),u_3)|
 \leq\sqrt{\Gamma_t(u_3)}\Big(&\sqrt{\gamma_{2,t}(u_1)-\frac12(\partial_t\Gamma_t)(u_1)}\sqrt{\Gamma_t(u_2)}\\
 +&\sqrt{\gamma_{2,t}(u_2)-\frac12(\partial_t\Gamma_t)(u_2)}\sqrt{\Gamma_t(u_1)}\Big).
\end{aligned}
\end{equation}
We obtain \eqref{u1u2u3} for arbitrary $u_3\in\F\cap L^\infty(X,m_t)$ by approximating $u_3$ by a sequence $u_3^n$ converging in energy with
\begin{align*}
\Gamma_t(u_3^n)\to \Gamma_t(u),\qquad \Gamma_t(u_3^n,\Gamma_t(u_1,u_2))\to\Gamma_t(u_3,\Gamma_t(u_1,u_2)) 
\end{align*}
pointwise and in $L^1(X,m_t)$, cf. Theorem 3.4 in \cite{savare}
Hence we may choose $u_3=\Gamma_t(u_1,u_2)$, and obtain the result choosing $u_1=u_2$.

\end{proof}

Now we are ready to prove Theorem \ref{thm1}. 
 
\begin{proof}[Proof of Theorem \ref{thm1}]
  Define for each $\varepsilon>0$ the concave and smooth function $\omega_\varepsilon(\cdot):=(\varepsilon+\cdot)^\alpha-\varepsilon^\alpha$.
 Note that this function satisfies
 \begin{align}\label{omegaest}
2\omega_\varepsilon'(r)+4r\omega_\varepsilon''(r)\geq0.  
 \end{align}

 For each $s,t\in (0,T)$ under consideration as well as $u\in \Lip(X)$ and  $g\in\F\cap L^\infty$ with $g\ge0$, we set
$u_r=P_{r,s}u$, $g_r=P^*_{t,r}g$ for $r\in [s,t]$. Note that for a.e. $r\in[s,t]$ $u_r\in\Dom(\Delta_r)$ and $u,\Gamma_r(u)\in L^\infty(X,m_r)$.

We consider the function 
 \begin{equation*}
  h_r^\varepsilon:=\int g_r\omega_\varepsilon(\Gamma_r(u_r))dm_r.
 \end{equation*}
 Choose $s\le\sigma<\tau\le t$ and $\delta>0$ sufficiently small that $\sigma\leq \tau-\delta$ such that
  \begin{equation}\label{ini-assu}
 h_\tau^\varepsilon\le\liminf_{\delta\searrow 0}\frac1\delta\int_{\tau-\delta}^\tau h_rdr
 \quad\mbox{and}\quad
 h_\sigma^\varepsilon\ge\limsup_{\delta\searrow 0}\frac1\delta\int^{\sigma+\delta}_\sigma h_rdr.
 \end{equation}
Note that by Lebesgue's density theorem, this is true at least for a.e.\  $\sigma\ge s$ and for a.e.\ $\tau\le t$.
 Then from
 \begin{align*}
  \int_{\tau-\sigma}^{\tau}h_r\, dr-\int_\sigma^{\sigma+\delta}h_r\, dr=\int_{\sigma}^{\tau-\delta}(h_{r+\delta}-h_r)\, dr,
 \end{align*}
and the concavity of $\omega_\varepsilon$ we deduce
 \begin{align*}
 h_\tau^\varepsilon-h_\sigma^\varepsilon\le&
 \liminf_{\delta\searrow 0}\frac1\delta\int_\sigma^{\tau-\delta} \big[h_{r+\delta}-h_r\big]dr\\
 \le&
  \limsup_{\delta\searrow 0}\frac1\delta\int_\sigma^{\tau-\delta}\int_X\omega_\varepsilon(\Gamma_{r+\delta}(u_{r+\delta}))d(\mu_{r+\delta}-\mu_r)\,dr\\
  +  &\liminf_{\delta\searrow 0}\frac1\delta\int_\sigma^{\tau-\delta}\int_Xg_r\omega'_\varepsilon(\Gamma_r(u_r))\Big[\Gamma_{r+\delta}(u_{r})-\Gamma_r(u_{r})\Big]dm_r\,dr\\
   +  &\limsup_{\delta\searrow 0}\frac1\delta\int_\sigma^{\tau-\delta}\int_Xg_r\omega_\varepsilon'(\Gamma_r(u_r))
   \Gamma_{r+\delta}(u_{r+\delta},u_{r+\delta}-u_r)\, dm_r\\
   +&\limsup_{\delta\searrow 0}\frac1\delta\int_\sigma^{\tau-\delta}\int_Xg_r\omega_\varepsilon'(\Gamma_r(u_r))\Gamma_{r+\delta}(u_{r+\delta}-u_r,u_r)\, dm_r\,dr\\
   =:& (I) + (II) + (III')+ (III'').
 \end{align*}
 Let us denote with a slight abuse of notation $\hat g_r=g_r\omega_\varepsilon'(\Gamma_r(u_r))$. Note that $\hat g\in L^1\cap L^\infty(X)$ and $\hat g\in\F$.
 Each of the four terms will be considered separately. Since $r\mapsto \mu_r$ is a solution to the dual heat equation, we obtain
  \begin{align*}
  (I)=& \limsup_{\delta\searrow 0}\frac1\delta\int_\sigma^{\tau-\delta}\int_X\omega_\epsilon(\Gamma_{r+\delta} (u_{r+\delta}))\cdot\Big(- \int_r^{r+\delta}\Delta_q g_q\,dm_q\,dq\Big)dr\\
  =& -\liminf_{\delta\searrow 0}\int_{\sigma+\delta}^{\tau}\int_X\omega_\varepsilon(\Gamma_{r} (u_{r}))\Big( \frac{1}\delta\int^r_{r-\delta}\Delta_q g_q e^{-f_q}\,dq\Big)dm_\diamond\,dr\\
  =&-\int_\sigma^\tau\int_X \omega_\varepsilon(\Gamma_r(u_r))\cdot \Delta_rg_r\, dm_r\,dr
   \end{align*}
   due 
   Lebesgue's density theorem applied to $r\mapsto  \Delta_rg_re^{-f_r}$.
   Note that the latter function is in $L^2$ (Theorem \ref{energy-estEva}) and the function $r\mapsto  \omega_\varepsilon(\Gamma_r(u_r))$ is in $L^\infty$ 
   thanks to \eqref{reg-bocheva}.
   
   The second term can estimated according to Proposition \ref{propdiffbar}:
  \begin{align*}
  (II)=&
    \liminf_{\delta\searrow 0}\frac1\delta\int_\sigma^{\tau-\delta}\int_X\hat g_r\Big[ \Gamma_{r+\delta}(u_{r})-\Gamma_r(u_{r})\Big]dm_r\,dr\\
 =& \int_\sigma^\tau\int_X\hat g_r\,(\partial_r\Gamma_r)(u_r)dm_rdr.
    \end{align*}

  The  term $ (III')$ is transformed as follows
   \begin{align*}
  &(III')\\
  =& \limsup_{\delta\searrow 0}\frac1\delta\int_\sigma^{\tau-\delta}\int_X
  \hat g_{r+\delta}\Gamma_{r+\delta}(u_{r+\delta},u_{r+\delta}-u_r)\, dm_{r+\delta}\, dr\\
  =&- \liminf_{\delta\searrow 0}\int^{\tau-\delta}_{\sigma}\int_X\Big(
  \Gamma_{r+\delta}(\hat g_{r+\delta},u_{r+\delta})
  +\hat g_{r+\delta}\,\Delta_{r+\delta}u_{r+\delta}\Big)\Big(\frac1\delta \int_r^{r+\delta}\Delta_qu_q\,dq\Big)dm_{r+\delta}\,dr\\
  =&-\int_\sigma^{\tau}\int_X\Big(
  \Gamma_{r}(\hat g_r,u_{r})+\hat g_r\,\Delta_{r}u_{r}\Big)\cdot \Delta_ru_r\,dm_r\,dr.
           \end{align*}
           Here again we used Lebesgue's density theorem (applied to $r\mapsto  \Delta_ru_r$) and the `nearly continuity' of $r\mapsto \hat g_r$ as map from $(s,t)$ into $L^2(X,m)$ and as map into $\F$  (Lusin's theorem). 
           Moreover, we used  the boundedness (uniformly in $r$ and $x$) of $g_r$ and of $\Gamma_r(u_r)$ as well as the square integrability of $\Delta_ru_r$.

Similarly, the  term $ (III'')$ will be transformed:
   \begin{align*}
  (III'')
  = &\limsup_{\delta\searrow 0}\frac1\delta\int_\sigma^{\tau-\delta}\int_X \hat g_r\Gamma_r(u_{r+\delta}-u_r,u_r)\, dm_r\, dr\\
  =&- \liminf_{\delta\searrow 0}\frac1\delta\int_\sigma^{\tau-\delta}\int_X\Big(
  \Gamma_{r}(\hat g_r,u_{r})+\hat g_r\Delta_{r}u_{r}\Big)\cdot\Big( \int_r^{r+\delta}\Delta_qu_q\,dq\Big)dm_r\,dr\\
  =&-\int_\sigma^{\tau}\int_X\Big(
  \Gamma_{r}(\hat g_r,u_{r})+\hat g_r\,\Delta_{r}u_{r}\Big)\cdot\Big( \Delta_ru_r\Big)dm_r\,dr.
           \end{align*}
  We therefore obtain
  \begin{align*}
 &h_\tau^\varepsilon-h_\sigma^\varepsilon=(I) + (II) + (III')+ (III'')\\
\le& \int_\sigma^\tau\int_X\Big[
- \omega_\varepsilon(\Gamma_r(u_r))\cdot \Delta_rg_r
 +\hat g_r\,(\partial_r\Gamma_r)(u_r)
 -2\big(
  \Gamma_{r}(\hat g_r,u_{r})+\hat g_r\,\Delta_{r}u_{r}\big)\,\Delta_ru_r
 \Big] dm_r\,dr\\
 =&\int_\sigma^\tau\int\Big[\Gamma_r(\Gamma_r(u_r),\hat g_r)-\Gamma_r(\Gamma_r(u_r))\omega_\varepsilon''(\Gamma_r(u_r))g_r
 +\hat g_r\,(\partial_r\Gamma_r)(u_r)\\
 &\qquad\qquad-2\big(\Gamma_{r}(\hat g_r,u_{r})+\hat g_r\,\Delta_{r}u_{r}\big)\,\Delta_ru_r\Big] dm_r\,dr\\
=&\int_\sigma^\tau-2\Gamma_{2,r}(u_r)(\hat g_r)\, dr-\int_\sigma^\tau\int\Big[\Gamma_r(\Gamma_r(u_r))\omega_\varepsilon''(\Gamma_r(u_r))g_r
 +\hat g_r\,(\partial_r\Gamma_r)(u_r)
 \Big] dm_r\,dr.
 \end{align*}
Applying \eqref{mystuff2}, Proposition \ref{gammaestimate}, \eqref{omegaest} and taking into account the concavity of $\omega_\varepsilon$ we further deduce for a.e. $r\in[s,t]$,
 \begin{align*}
 &h_\tau^\varepsilon-h_\sigma^\varepsilon\\
 \leq& \int_\sigma^\tau\int_X\Big[-2\gamma_{2,r}(u_r)\hat g_r+\hat g_r\,(\partial_r\Gamma_r)(u_r)
 -\Gamma_r(\Gamma_r(u_r))\omega_\varepsilon''(\Gamma_r(u_r))g_r\Big]\, dm_r\, dr\\
 \leq& \int_\sigma^\tau\int_X\Big[
 -g_r\Big(\gamma_{2,r}(u_r)-\frac12(\partial_r\Gamma_r)(u_r)\Big)
 \Big(2\omega_\varepsilon'(\Gamma_r(u_r))+4\omega_\varepsilon''(\Gamma_r(u_r))\Gamma_r(u_r)\Big)\Big]\, dm_r\, dr\\
 \leq &0.
   \end{align*}
 
Hence we showed that, given $u$ and $g$, there exists exceptional sets (which are null sets) for $\tau$ and $\sigma$ outside of these sets 
 \begin{equation}\label{fugrad}
 \int_X\omega_\varepsilon(\Gamma_\tau(P_{\tau,\sigma}u)) g\, dm_\tau
 -\int_X P_{\tau,\sigma}\omega_\varepsilon(\Gamma_\sigma(u))\, g\,dm_\tau
  \le  0
     \end{equation}
holds.
Choosing $g$'s from a dense countable set one may achieve that the exceptional sets for $\sigma$ and $\tau$ in \eqref{fugrad} do not depend on $g$.
 Next we may assume that $\sigma,\tau\in [s,t]$ with $\sigma<\tau$ is chosen  such that \eqref{fugrad} simultaneously holds for all $u$ from a dense 
 countable set ${\mathcal C}_1$ in $\Lip(X)$. 
We approximate arbitrary $u \in\Lip(X)$ by $u_n\in{\mathcal C}_1$ in energy and in $L^2$ such that $\sqrt{\Gamma_\tau(P_{\tau,\sigma}u_n)}\rightharpoonup G$
in $L^2$, for some $G\in L^2(X)$. This is possible since $||\sqrt{\Gamma_\tau(P_{\tau,\sigma}u_n)}||_{L^2(X)}$ is uniformly bounded. 
Then we have on the one hand
\begin{align}\label{fugrad1}
 \limsup_{n\to\infty}\int_X P_{\tau,\sigma}\omega_\varepsilon(\Gamma_\sigma(u_n))\, g\,dm_\tau\leq\int_X P_{\tau,\sigma}\omega_\varepsilon(\Gamma_\sigma(u))\, g\,dm_\tau
\end{align}
since
\begin{align*}
&\int_X P_{\tau,\sigma}\omega_\varepsilon(\Gamma_\sigma(u_n))\, g\,dm_\tau-\int_X P_{\tau,\sigma}\omega_\varepsilon(\Gamma_\sigma(u))\, g\,dm_\tau\\
\leq &\int_X P_{\tau,\sigma}^*g\, \omega_\varepsilon'(\Gamma_\sigma(u))(\Gamma_\sigma(u_n)-\Gamma_\sigma(u))\,dm_\sigma\\
\leq &||P_{\tau,\sigma}^*g\, \omega_\varepsilon'(\Gamma_\sigma(u))||_{L^\infty(X)}\left|\int_X\Gamma_\sigma(u_n)-\Gamma_\sigma(u)\,dm_\sigma\right|.
\end{align*}
On the other hand we find 
\begin{align}\label{fugrad2}
 \liminf_{n\to\infty}\int_X\omega_\varepsilon(\Gamma_\tau(P_{\tau,\sigma}u_n)) g\, dm_\tau\geq \int_X\omega_\varepsilon(\Gamma_\tau(P_{\tau,\sigma}u)) g\, dm_\tau.
\end{align}
Indeed, since $P_{\tau,\sigma}u_n\to P_{\tau,\sigma}u$ and $\sqrt{\Gamma(P_{\tau,\sigma}u_n)}\rightharpoonup G$ in $L^2(X)$ we know 
$\Gamma(P_{\tau,\sigma}u)\leq G^2$ $m$-a.e. and hence 
\begin{align*}
 &\int_X\omega_\varepsilon(\Gamma_\tau(P_{\tau,\sigma}u_n)) g\, dm_\tau-\int_X\omega_\varepsilon(\Gamma_\tau(P_{\tau,\sigma}u)) g\, dm_\tau\\
 =&\int_X\tilde\omega_\varepsilon(\sqrt{\Gamma_\tau(P_{\tau,\sigma}u_n)}) g\, dm_\tau-\int_X\tilde\omega_\varepsilon(\sqrt{\Gamma_\tau(P_{\tau,\sigma}u)}) g\, dm_\tau\\
 \geq &\int_X\tilde\omega_\varepsilon'(\sqrt{\Gamma_\tau(P_{\tau,\sigma}u)})(\sqrt{\Gamma_\tau(P_{\tau,\sigma}u_n)}-\sqrt{\Gamma_\tau(P_{\tau,\sigma}u)}) g\, dm_\tau\\
 \geq &\int_X\tilde\omega_\varepsilon'(\sqrt{\Gamma_\tau(P_{\tau,\sigma}u)})(\sqrt{\Gamma_\tau(P_{\tau,\sigma}u_n)}-G) g\, dm_\tau,
\end{align*}
where $\tilde\omega(r)=\omega(r^2)$, which is convex and monotone.
Combining \eqref{fugrad}, \eqref{fugrad1} and \eqref{fugrad2} yields
     \begin{align*}
 &\int_X\omega_\varepsilon(\Gamma_\tau(P_{\tau,\sigma}u)) g\, dm_\tau
 -\int_X P_{\tau,\sigma}\omega_\varepsilon(\Gamma_\sigma(u))\, g\,dm_\tau\\
 \leq&\liminf_n\int_X\omega_\varepsilon(\Gamma_\tau(P_{\tau,\sigma}u_n)) g\, dm_\tau
 -\limsup_n\int_X P_{\tau,\sigma}\omega_\varepsilon(\Gamma_\sigma(u_n))\, g\,dm_\tau\\
 \leq&\liminf_n\left(\int_X\omega_\varepsilon(\Gamma_\tau(P_{\tau,\sigma}u_n)) g\, dm_\tau
 -\int_X P_{\tau,\sigma}\omega_\varepsilon(\Gamma_\sigma(u_n))\, g\,dm_\tau\right)
  \leq 0.
     \end{align*}
 Letting $\varepsilon\to0$ we showed that 
     \begin{align}\label{showneq}
 &\int_X(\Gamma_\tau(P_{\tau,\sigma}u))^\alpha g\, dm_\tau
 \leq\int_X P_{\tau,\sigma}(\Gamma_\sigma(u)^\alpha)\, g\,dm_\tau.
  \end{align}
  Since $\Lip(X)$ is dense in $\F$ we can extend \eqref{showneq} to arbitrary $u\in\F$.
  Since $g$ is arbitrary we obtain the result.
\end{proof}

\subsection{From $L^2$-transport estimates to Bochner's inequality}\label{sec:bochner}
For the proof of Theorem \ref{thm3} it is still left to show that super-Ricci flow implies the pointwise dynamic Bochner inequality \eqref{Bochnereva} at each time $t$. In more detail we show the following.
\begin{thm}\label{thm2}
Let $(X,d_t,m_t)_{t\in I}$ be a one-parameter family of geodesic Polish metric measure spaces satisfying \eqref{assumption1}, \eqref{assumption2}, \eqref{assumption3}
and \eqref{assumption4} such that each $(X,d_t,m_t)$ is a RCD$(K,N)$ space.
If the transport estimate \eqref{transporting} holds, then the pointwise dynamic Bochner inequality \eqref{Bochnereva} holds at all $t\in I$. Moreover the regularity assumption \eqref{reg-bocheva} is satisfied.
\end{thm}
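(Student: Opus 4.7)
The plan is to differentiate the $L^2$-gradient estimate (equivalent to super-Ricci flow by Definition \ref{supereva}) at coincident times $s=t=t_0$ for an arbitrary $t_0\in I$. The regularity \eqref{reg-bocheva} comes for free: starting from $\Gamma_r(P_{r,s}u)\le P_{r,s}\Gamma_s(u)$ $m$-a.e., any Lipschitz $u$ gives $\|\Gamma_r(P_{r,s}u)\|_\infty\le \|\lip_s u\|_\infty^2$ independently of $r$; the Sobolev-to-Lipschitz property in RCD$(K,N)$ spaces promotes this to a bound on a Lipschitz representative of $P_{r,s}u$, and \eqref{assumption2} makes it uniform in $r,x$.

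Fix $t_0\in I$, $u\in\Dom(\Delta_{t_0})\cap L^\infty$ with $\Gamma_{t_0}(u)\in L^\infty$, and $g\in\Dom(\Delta_{t_0})\cap L^\infty$, $g\ge 0$. Set
\[
\phi(\varepsilon):=\int g\,\Gamma_{t_0+\varepsilon}(P_{t_0+\varepsilon,t_0}u)\,dm_{t_0+\varepsilon},\qquad \psi(\varepsilon):=\int g\,P_{t_0+\varepsilon,t_0}\Gamma_{t_0}(u)\,dm_{t_0+\varepsilon}.
\]
Since $\Gamma_{t_0+\varepsilon}(P_{t_0+\varepsilon,t_0}u)\le P_{t_0+\varepsilon,t_0}\Gamma_{t_0}(u)$ $m$-a.e., one has $\phi(\varepsilon)\le \psi(\varepsilon)$ on $\varepsilon\ge 0$ with equality at $\varepsilon=0$, hence $\phi'(0^+)\le \psi'(0^+)$. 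Direct differentiation, using the heat equation $\partial_\varepsilon P_{t_0+\varepsilon,t_0}u|_{\varepsilon=0}=\Delta_{t_0}u$ (from Theorem \ref{energy-estEva}) and Proposition \ref{propdiffbar} for the $r$-derivative of $\Gamma_r$ at a fixed function, yields
\[
\phi'(0^+)=\int g\,(\partial_{t_0}\Gamma_{t_0})(u)\,dm_{t_0}+2\int g\,\Gamma_{t_0}(u,\Delta_{t_0}u)\,dm_{t_0}-\int g\,\Gamma_{t_0}(u)\,\partial_{t_0}f_{t_0}\,dm_{t_0},
\]
\[
\psi'(0^+)=-\int \Gamma_{t_0}(g,\Gamma_{t_0}(u))\,dm_{t_0}-\int g\,\Gamma_{t_0}(u)\,\partial_{t_0}f_{t_0}\,dm_{t_0}.
\]
The $\partial_{t_0}f_{t_0}$-contributions cancel, leaving $\int g\,(\partial_{t_0}\Gamma_{t_0})(u)\,dm_{t_0}+2\int g\,\Gamma_{t_0}(u,\Delta_{t_0}u)\,dm_{t_0}\le -\int\Gamma_{t_0}(g,\Gamma_{t_0}(u))\,dm_{t_0}$.

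A Leibniz/integration-by-parts step then concludes: $\int g\,\Gamma_{t_0}(u,\Delta_{t_0}u)\,dm_{t_0}=-\int g(\Delta_{t_0}u)^2\,dm_{t_0}-\int \Gamma_{t_0}(u,g)\Delta_{t_0}u\,dm_{t_0}$ (using $u\in\Dom(\Delta_{t_0})$), and $-\int\Gamma_{t_0}(g,\Gamma_{t_0}(u))\,dm_{t_0}=\int\Gamma_{t_0}(u)\Delta_{t_0}g\,dm_{t_0}$ (using $g\in\Dom(\Delta_{t_0})$ together with $\Gamma_{t_0}(u)\in\F$). Rearranging produces \eqref{Bochnereva} exactly. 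The main obstacle is the rigorous justification of $\phi'(0^+)$: it combines a strong $L^2$-limit for $(P_{t_0+\varepsilon,t_0}u-u)/\varepsilon$, $\F$-continuity of $P_{t_0+\varepsilon,t_0}u+u$ at $\varepsilon=0$, and the continuous $r$-dependence of the bilinear form $\Gamma_r$; this is handled via the decomposition
\[
\Gamma_{t_0+\varepsilon}(P_{t_0+\varepsilon,t_0}u)-\Gamma_{t_0}(u) = \Gamma_{t_0+\varepsilon}(P_{t_0+\varepsilon,t_0}u-u,\, P_{t_0+\varepsilon,t_0}u+u) + [\Gamma_{t_0+\varepsilon}(u)-\Gamma_{t_0}(u)]
\]
together with the uniform bounds of Theorem \ref{energy-estEva}. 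The input $\Gamma_{t_0}(u)\in\F$ needed for $\psi'(0^+)$ follows from the static RCD$(K,N)$-structure at time $t_0$ applied to $u\in\Dom(\Delta_{t_0})$ with bounded gradient. Since every step is evaluated at the preassigned $t_0$, the inequality holds at every $t_0\in I$, not merely a.e., as required.
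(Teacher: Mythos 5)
Your strategy---differentiate the $L^2$-gradient estimate at coincident times to extract the Bochner inequality---is the same conceptual route as the paper. The terms you obtain and the final rearrangement into \eqref{Bochnereva} are correct, and the regularity argument for \eqref{reg-bocheva} matches the paper's (gradient estimate plus maximum principle). However, there is a genuine gap in the justification of $\phi'(0^+)$, and it is precisely the gap the paper's proof is designed to circumvent.

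The problem is the term $\varepsilon^{-1}\int g\,\Gamma_{t_0+\varepsilon}\big(P_{t_0+\varepsilon,t_0}u-u,\ P_{t_0+\varepsilon,t_0}u+u\big)\,dm_{t_0+\varepsilon}$. Your decomposition isolates this correctly, but to pass to the limit you effectively need $(P_{t_0+\varepsilon,t_0}u-u)/\varepsilon\to\Delta_{t_0}u$ in the $\F$-norm (or at least boundedness in $\F$), not merely in $L^2$; a carr\'e-du-champ pairing of an $L^2$-converging sequence against an $\F$-converging sequence does not converge. For $u\in\Dom(\Delta_{t_0})\cap L^\infty$ with $\Gamma_{t_0}(u)\in L^\infty$ this $\F$-convergence is generally false: already in the static case, $\E\big((H_\varepsilon u-u)/\varepsilon\big)$ need not stay bounded unless $\Delta u\in\F$. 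The ``uniform bounds of Theorem \ref{energy-estEva}'' you invoke control $\E_\tau(u_\tau)$ and $\int\|\Delta_t u_t\|_2^2\,dt$, which is not enough to bound $\E_{t_0}\big((P_{t_0+\varepsilon,t_0}u-u)/\varepsilon\big)$.

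The paper avoids this in two ways, both missing from your sketch. First, it works from the outset with the mollified input $u=h^t_\varepsilon u_0$, which puts $u,\Delta_t u\in\Dom(\Delta_t)\cap\Lip_b(X)$, and removes the mollification only at the very end. Second---and this is the structural difference---it does not differentiate the (quadratic) gradient estimate directly, but passes to the polarized intermediate inequality $-\tfrac12\int P_{t,s}(\Gamma_s(u))g\,dm_t+\int\Gamma_t(P_{t,s}u,u)g\,dm_t\le\tfrac12\int\Gamma_t(u)g\,dm_t$, which is linear in $P_{t,s}u$. After an integration by parts this rewrites as expressions of the form $\int P^*_{t,s}(\cdot)u\,dm_s$, and the only object being differentiated in time is the dual propagator $P^*_{t,s}$ applied to a fixed function, which is exactly what Lemma \ref{P*Eva} controls. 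This sidesteps any need to differentiate $\Gamma$ composed with a heat flow. To repair your argument you would either need to reproduce this polarize-then-IBP step, or mollify $u$ as the paper does and then extend to the full class of test functions at the end; as written, the computation of $\phi'(0^+)$ is not justified for the class of $u$ you start with.
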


For the proof of Theorem \ref{thm2} we follow the argumentation in the proof of Theorem 5.13 in \cite{sturm2016}. The argumentation in \cite{sturm2016} is inspired
by \cite{bggk2015}, where the authors prove the equivalence between Wasserstein contraction estimates and Bochner's inequality in the static setting.

\begin{proof}[Proof of Theorem \ref{thm2}]
Since the transport estimate \eqref{transporting} implies the $L^2$-gradient estimate and the heat flow satisfies the maximum principle,
the regularity assumption is clearly satisfied.

Define $u=h_\varepsilon^tu_0$, where $u_0\in L^\infty(X,m_t)\cap L^2(X,m_t)$ and $h_\varepsilon^t$ the static semigroup mollification
\begin{align*}
 h_\varepsilon^tu_0:=-\frac1{\varepsilon^2}\int_0^\infty H_r^tu_0\kappa(\frac{r}{\varepsilon})\, dr.
\end{align*}
Here, $(H_r^t)_{r\geq0}$ denotes the (static) semigroup associated to $\E_t$ and $\kappa\in\mathcal C_c^\infty((0,\infty))$ with $\kappa\geq0$ and 
$\int_0^\infty\kappa_r\, dr=1$. Recall that $u,\Delta_tu\in\Dom(\Delta_t)\cap\Lip_b(X)$.

Let $g\in\F\cap L^\infty(X,m_t)$ such that $g\geq0$. Then, the transport estimate \eqref{transporting} together with Lemma 5.10 and Lemma 5.11 in 
\cite{sturm2016} eventually yields
 \begin{align*}
  &-\frac{1}2 \int P_{t,s}(\Gamma_s(u)) gdm_t+\int \Gamma_t(P_{t,s}u,u)gdm_t\leq\frac{1}{2}\int\Gamma_t(u)gdm_t,
 \end{align*}
see \cite[Section 5]{sturm2016}. Then following the lines in \cite{sturm2016},
we subtract $\frac12\int\Gamma_t(u)gdm_t$ on each side and divide by $t-s$ obtaining
 \begin{equation}
 \begin{aligned}\label{decomposition}
  &\frac{1}{2(t-s)}\left[\int\Gamma_t(u)gdm_t-\int P_{t,s}(\Gamma_s(u)) gdm_t\right]\\
  &+\frac1{t-s}\left[\int \Gamma_t(P_{t,s}u,u)gdm_t-\int\Gamma_t(u)gdm_t\right]\\
  &\leq 0.
 \end{aligned}
 \end{equation}
 We decompose the first term on the left-hand side into the following two terms
 \begin{align*}
  &\frac1{2(t-s)}\left[\int\Gamma_t(u)gdm_t-\int\Gamma_s(u)P^*_{t,s}g dm_s\right]\\
  =&\frac1{2(t-s)}\left[\int\Gamma_t(u)gdm_t-\int\Gamma_t(u)P^*_{t,s}g dm_s\right]+\frac1{2}\int\frac{\Gamma_t(u)-\Gamma_s(u)}{t-s}P^*_{t,s}g dm_s.\\
 \end{align*}
 Recall that $\Gamma_t(u)\in\F$ \cite[Lemma 3.2]{savare} and thus we can apply Lemma \ref{P*Eva}, which gives us
 \begin{align}\label{decomp1.1}
 \lim_{s\nearrow t} \frac1{(t-s)}\left[\int\Gamma_t(u)gdm_t-\int\Gamma_t(u)P^*_{t,s}g dm_s\right]=\int\Gamma_t(\Gamma_t(u),g)dm_t,
 \end{align}
while, since $|\frac{\Gamma_s(u)-\Gamma_t(u)}{(t-s)}|\leq2L\Gamma_t(u)\in L^\infty(X,m_t)$,
\begin{equation}
\begin{aligned}\label{decomp1.2}
 &\liminf_{s\nearrow t}\int\frac{\Gamma_t(u)-\Gamma_s(u)}{(t-s)}(P^*_{t,s}g)dm_s\\
 &\geq \liminf_{s\nearrow t}\int\frac{\Gamma_t(u)-\Gamma_s(u)}{(t-s)}gdm_t+\liminf_{s\nearrow t}\int\frac{\Gamma_t(u)-\Gamma_s(u)}{(t-s)}(P^*_{t,s}ge^{-f_s}-ge^{-f_t})dm\\
 &\geq\int(\partial_t\Gamma_t)(u)gdm_t-\limsup_{s\nearrow t}2L||\Gamma_t(u)||_{L^\infty(X,m_t)}||P^*_{t,s}ge^{-f_s}-ge^{-f_t}||_{L^1(X,m_t)}\\
 &=\int(\partial_t\Gamma_t)(u)gdm_t,
\end{aligned}
\end{equation}
where we used Proposition \ref{propdiffbar} in the last inequality and that $P^*_{t,s}ge^{-f_s}\to ge^{-f_t}$ in $L^1(X,m)$ as $s\to t$.

 Regarding the second term on the left-hand side of \eqref{decomposition}, 
 note that the Leibniz rule and the integration by parts formula is applicable and we get
\begin{equation}\label{eq:chain}
\begin{aligned}
 \int \Gamma_t(P_{t,s}u,u)gdm_t=\int\Gamma_t(gP_{t,s}u,u)dm_t-\int\Gamma_t(g,u)P_{t,s}u dm_t\\
 =-\int \psi P^*_{t,s}(g\Delta_tu)dm_s-\int P^*_{t,s}(\Gamma_t(g,u))u dm_s.
\end{aligned}
\end{equation}
Subtracting $\int\Gamma_t(u)gdm_t$ and applying \eqref{eq:chain}
\begin{align*}
 &\frac1{(t-s)}(\int \Gamma_t(P_{t,s}u,u)gdm_t-\int\Gamma_t(u)gdm_t)\\
 =&\frac1{(t-s)}(-\int \psi P^*_{t,s}(g\Delta_tu)dm_s+\int\psi(g\Delta_tu)dm_t)\\
 +&\frac1{(t-s)}(-\int P^*_{t,s}(\Gamma_t(g,u))u dm_s+\int\Gamma_t(u,g)u dm_t).
\end{align*}
Letting $s\nearrow t$ we have since $g\in\F\cap L^\infty(X,m_t)$ and $\Delta_tu\in \Lip_b(X)$, $g\Delta_tu\in \F\cap L^1(X,m_t)$ 
\begin{align*}
 \lim_{s\nearrow t}\frac1{(t-s)}(-\int u P^*_{t,s}(g\Delta_tu)dm_s+\int u(g\Delta_tu)dm_t)=
 \int\Gamma_t(u,g\Delta_tu)dm_t
\end{align*}
by virtue of Lemma \ref{P*Eva}. In order to determine
\begin{align*}
 \lim_{s\nearrow t}\frac1{(t-s)}(-\int P^*_{t,s}(\Gamma_t(g,u))u dm_s+\int\Gamma_t(u,g)u dm_t),
\end{align*}
we need to argue whether $\Gamma_t(g,u)\in\F$. But this is the case, since, due to our static RCD$(K,\infty)$ assumption, we may apply 
Theorem 3.4 in \cite{savare} and obtain
\begin{align*}
 \Gamma_t(\Gamma_t(g,u))\leq 2(\gamma_2(u)-K\Gamma_t(u))\Gamma_t(g)+2(\gamma_2(g)-K\Gamma_t(g))\Gamma_\tau(u)\quad m_t\text{-a.e.},
\end{align*}
where $\gamma_2(u),\gamma_2(g)\in L^1(X,m_t)$. Our regularity assumptions on $u$ and $g$ provide that the right hand side 
is in $L^1(X,m_t)$ and consequently Lemma \ref{P*Eva} implies
\begin{align*}
 \lim_{s\nearrow t}\frac1{(t-s)}(-\int P^*_{t,s}(\Gamma_t(g,u))u dm_s+\int\Gamma_t(u,g)u dm_t)
 =\int\Gamma_t(\Gamma_t(g,u),u) dm_t.
\end{align*}
Combining these observations we find
\begin{equation}
\begin{aligned}\label{decomp2}
 &\lim_{s\nearrow t}\frac1{(t-s)}(\int \Gamma_t(P_{t,s}u,u)gdm_t-\int\Gamma_t(u)gdm_t)\\
 =&\int \Gamma_t(u,g\Delta_tu)dm_t+\int\Gamma_t(\Gamma_t(g,u),u) dm_t
 =-\int(\Delta_tu)^2g+\Gamma_t(g,u)\Delta_tudm_t.
\end{aligned}
\end{equation}
Hence from \eqref{decomposition}, \eqref{decomp1.1}, \eqref{decomp1.2} and \eqref{decomp2}
\begin{align*}
 \frac12\int(\partial_t\Gamma_t)(u)gdm_t+ \frac12\int\Gamma_t(\Gamma_t(u),g)dm_t
 \leq\int(\Delta_tu)^2g+\Gamma_t(g,u)\Delta_tudm_t.
\end{align*}

Let now $g\in\Dom(\Delta_t)\cap L^\infty(X,m)$ with $g\geq0$ and $u\in\Dom(\Delta_t)\cap L^\infty(X,m_t)$ with $\Gamma_tu\in L^\infty(X,m_t)$.
Then from the above argumentation we obtain
\begin{align*}
&\frac12\int\Gamma_t(h_\varepsilon^tu)\Delta_tg\, dm_t+\int(\Delta_t(h_\varepsilon^tu))^2g+\Gamma_t(g,h_\varepsilon^tu)\Delta_t(h_\varepsilon^tu)dm_t\\
&\geq  \frac12\int(\partial_t\Gamma_t)(h_\varepsilon^tu_n)gdm_t.
\end{align*}
Since $(\partial_t\Gamma_t)(u)(x)=-2H_t(x)\Gamma_t(u)(x)$ for $m$-a.e. $x\in X$ and $|H_t(x)|\leq C$, we obtain the assertion by letting $\varepsilon\to 0$
with taking into account that 
\begin{align*}
 ||h_\varepsilon^t u-u||_\F\to0\text{ as }\varepsilon\to0\text{ and }\Delta_th_\varepsilon^t u=h_\varepsilon^t\Delta_t u.
\end{align*}

\end{proof}

\subsection{Proof of the equivalence result}\label{sec:proofofeq}
It remains to finalize the proof of Theorem \ref{thm3}. Before we do so let us quickly prove the following Lemma.

\begin{lma}\label{cor:kuwa}
Suppose that $(X,d_t,m_t)_{t\in I}$ is a super-Ricci flow satisfying \eqref{assumption3} and \eqref{assumption4}. Then
\begin{enumerate}
\item[i)] for every $u\in \F\cap L^\infty(X,m_s)$ and every $\beta\in[1,2]$
\begin{align}\label{eq:kuwa}
|\nabla_tP_{t,s}u|_*^\beta\leq P_{t,s}(|\nabla_s u|_*^\beta),
\end{align}
\item[ii)] for every $\mu,\nu\in\mathcal P(X)$ and every $p\in[1,\infty]$
\begin{align}\label{eq:kuwa23}
W_{p,s}(\hat P_{t,s}\mu,\hat P_{t,s}\nu)\leq W_{p,t}(\mu,\nu).
\end{align}
\end{enumerate}

\end{lma}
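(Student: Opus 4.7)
The plan is to reduce the lemma to the two structural results already stated in the excerpt, namely Theorem~\ref{thm2} (super-Ricci flow $\Rightarrow$ pointwise dynamic Bochner) and Theorem~\ref{thm1} (pointwise dynamic Bochner $\Rightarrow$ improved $L^\alpha$-gradient estimate), and then to convert the gradient estimate into a Wasserstein transport estimate via Kuwada's duality.

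For part (i), I will first invoke Theorem~\ref{thm2}: the super-Ricci flow hypothesis together with \eqref{assumption3}--\eqref{assumption4} yields that the pointwise dynamic Bochner inequality \eqref{Bochnereva} holds at every $t\in I$ and that the regularity requirement \eqref{reg-bocheva} is satisfied. These are precisely the hypotheses of Theorem~\ref{thm1}, which then produces
\[
(\Gamma_t(P_{t,s}u))^\alpha\leq P_{t,s}(\Gamma_s(u)^\alpha)\qquad m\text{-a.e.}
\]
for every $\alpha\in[1/2,1]$ and every $u\in\Dom(\Ch)$. Setting $\beta=2\alpha\in[1,2]$ and using the identification $\Gamma_t(v)=|\nabla_t v|_*^2$ is exactly \eqref{eq:kuwa} on Lipschitz test functions; a routine density argument (via density of $\Lip(X)$ in $\F$, as used already in the proof of Theorem~\ref{thm1}) extends it to arbitrary $u\in\F\cap L^\infty(X,m_s)$.

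For part (ii), I will exploit the $\beta=1$ endpoint of (i), namely the \emph{pointwise} gradient estimate $|\nabla_t P_{t,s}u|_*\leq P_{t,s}(|\nabla_s u|_*)$. Since $P_{t,s}$ is given by a Markov kernel (Theorem~\ref{thm:kernels}), Jensen's inequality applied fibrewise bootstraps this to
\[
|\nabla_t P_{t,s}u|_*^{q}\leq P_{t,s}\bigl(|\nabla_s u|_*^{q}\bigr)\qquad\forall q\in[1,\infty],
\]
where the endpoint $q=\infty$ is interpreted as $\|\nabla_t P_{t,s}u\|_\infty\leq\|\nabla_s u\|_\infty$. Kuwada's duality (Theorem~2.2 in \cite{kuwadadual}), applied with this $L^q$-gradient estimate and its H\"older conjugate $p=q/(q-1)\in[1,\infty]$, then converts the estimate into the transport bound \eqref{eq:kuwa23}. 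As $q$ sweeps the whole range $[1,\infty]$, so does $p$, so (ii) is obtained for all $p\in[1,\infty]$.

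Because the substantive work is already encapsulated in Theorems~\ref{thm1} and~\ref{thm2}, the lemma itself is essentially a compilation. The only point that deserves a moment's attention is that the improved gradient estimate of Theorem~\ref{thm1} a priori only ranges over $\alpha\in[1/2,1]$, i.e.\ over $q=2\alpha\in[1,2]$, which via Kuwada's duality would give $W_p$-transport only for $p\in[2,\infty]$; the remaining range $p\in[1,2)$ (in particular the $W_1$-case) is not immediate. This gap is precisely what the Jensen bootstrap from the $\alpha=1/2$ endpoint fills, and identifying this as the right mechanism is the one non-formal step in the argument.
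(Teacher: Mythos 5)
Your overall architecture --- reduce to Theorem~\ref{thm2} (super-Ricci $\Rightarrow$ pointwise dynamic Bochner) and Theorem~\ref{thm1} (Bochner $\Rightarrow$ improved gradient estimate), then Jensen to fill in the missing range of exponents, then Kuwada --- is the same as the paper's. Your Jensen bootstrap runs in the opposite direction (you start at the $\alpha=1/2$, i.e.\ $\beta=1$, endpoint and push $q$ up to $\infty$; the paper instead starts at $\beta=2$ and uses $(P f^2)^{1/2}\leq(Pf^\beta)^{1/\beta}$ to reach $\beta\geq 2$), which is a perfectly legitimate and, arguably, cleaner choice since it directly sweeps $q\in[1,\infty]$ and hence $p\in[1,\infty]$.

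There is, however, a real gap: Theorem~\ref{thm1}, as stated, only yields the improved gradient estimate \emph{for a.e.\ $\tau\leq t$ and $\sigma\geq s$}, not for all time pairs; your ``these are precisely the hypotheses of Theorem~\ref{thm1}, which then produces \dots for every $\alpha$'' silently upgrades the conclusion to all $s\leq t$, and your density argument in $u$ does nothing to repair the exceptional set in time. In particular, the pointwise $\beta=1$ gradient estimate that your Jensen bootstrap leans on is not yet available at arbitrary times. The paper deals with this by first applying Kuwada to pass to the $W_p$-transport estimate on the full-measure set of times, then invoking the continuity in $(s,t)$ of both sides of the transport inequality (Lemma~\ref{continuitylp}) to extend it to all $s\leq t$, and only then feeding back through Kuwada to recover the gradient estimate for every $s\leq t$; this loop is what makes parts (i) and (ii) hold without a null set of bad times. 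Your proof as written needs that mechanism (or a replacement for it) before the bootstrap can be run. A smaller technical point you also elide: Kuwada's Theorem~2.2 is phrased in terms of local Lipschitz constants, so one must pass from $|\nabla_\cdot\cdot|_*$ to $\lip_\cdot$ via the identification of minimal relaxed gradients with asymptotic Lipschitz slopes (and Proposition~3.11 of \cite{agsbe}), as the paper does explicitly.
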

\begin{proof}
Note that, taking into account $\Gamma(u)=|\nabla u|_*^2$ 
due to our static Riemannian curvature bound, \eqref{eq:kuwa} holds at least for a.e. $s\leq t$ by Definition \ref{supereva}, Theorem \ref{thm2} and 
Theorem \ref{thm1}.
Then applying Kuwada's duality \cite[Theorem 2.2]{kuwadadual} implies that \eqref{eq:kuwa2} holds at all these time instances. 
Indeed, \eqref{eq:kuwa} implies that for all $u\in\Lip_b(X)$,
$|\nabla_tP_{t,s}u|_*\leq P_{t,s}(|\nabla_s u|_*^\beta)^{1/\beta}$ and thus by Proposition 3.11 in \cite{agsbe}
$\lip_tP_{t,s}u\leq P_{t,s}(|\nabla_s u|_*^\beta)^{1/\beta}$. We obtain 
\begin{align}\label{eq:kuwab}
 \lip_t(P_{t,s}u)\leq P_{t,s}(\lip_s(u)^\beta)^{1/\beta}
\end{align}
by virtue of $|\nabla u|_*= \lip\, u$ (Theorem 6.1 in \cite{cheeger} and Theorem 6.2 in \cite{agscalc}).
We deduce from Theorem 2.2 in \cite{kuwadadual} for a.e. $s\leq t$
\begin{align*}
 W_{p,s}(\hat P_{t,s}\mu,\hat P_{t,s}\nu)\leq W_{p,t}(\mu,\nu),
\end{align*}
where $p$ is the H\"older conjugate of $\beta$; $1/p+1/\beta=1$. Since both sides of the above equation are continuous 
in $s$ and $t$ 
(see Lemma \ref{continuitylp}), we obtain that it holds for all times $s\leq t$ and thus also 
\eqref{eq:kuwab} holds for all times by Theorem 2.2 in \cite{kuwadadual}. Following the same argumentation as above we find that 
\eqref{eq:kuwa} holds for all $s\leq t$ and all $u\in \Lip_b(X)$. By density of $\Lip(X)\subset \F$ we find $i)$.
The same applies to $p=1$ in \eqref{eq:kuwa23} by noting that 
$\lip_t(P_{t,s}u)\leq P_{t,s}(\lip_s(u)^\beta)^{1/\beta}$ for all $\beta\geq 2$ by virtue of Jensen's inequality.
\end{proof}
\begin{proof}[Proof of Theorem \ref{thm3}]
By virtue of Lemma \ref{cor:kuwa}, Theorem \ref{brownian} and Definition \ref{supereva} the only implication left to show is $iii)$ implies one of the other assertions. We show that $iii)$ implies $ii)$. This can be seen by choosing an $W_{p,t}$-optimal transport plan $\gamma$ between $\mu,\nu$. Let $q_{t,s}$ be the transition kernel of the coupled process. Then $q_{t,s}(z_1,dx,z_2,dy)\gamma(dz_1,dz_2)$ is a coupling of $\hat P_{t,s}\mu$ and $\hat P_{t,s}\nu$. Hence
\begin{align*}
&W_{p,s}(\hat P_{t,s}\mu,\hat P_{t,s}\nu)^p\leq\int \int d_s^{p}(x,y)q_{t,s}(z_1,dx,z_2,dy)\gamma(dz_1,dz_2)\\
=&\int E[d^p_s(X^1_s,X^2_s)|X^1_t=z_1,X^2_t=z_2] \gamma(dz_1,dz_2)\leq \int d^p_t(z_1,z_2)\gamma(dz_1,dz_2)\\
=&W_{p,t}(\mu,\nu)^p,
\end{align*}
where we used $iii)$ in the last inequality.
\end{proof}
\newpage
\bibliography{srf-KoSt-6}

\end{document}